\newcommand{\scr}[1]{\mathscr{#1}}
\newcommand{\frk}[1]{\mathfrak{#1}}
\newcommand{\bb}[1]{\mathbb{#1}}
\renewcommand{\rm}[1]{\mathrm{#1}}
\newcommand{\N}{\mathbb{N}}	
\newcommand{\Z}{\mathbb{Z}}	
\newcommand{\Q}{\mathbb{Q}}	
\newcommand{\C}{\mathbb{C}}	
\newcommand{\R}{\mathbb{R}}	
\newcommand{\Id}{\mathrm{Id}}	
\newcommand{\GL}{\mathrm{GL}}	
\newcommand{\Span}{\mathrm{span}}	
\newcommand{\dd}{\,\mathrm{d}}	
\newcommand{\de}{\partial}		
\renewcommand{\Re}{\mathrm{Re}}
\renewcommand{\Im}{\mathrm{Im}}
\newcommand{\THEN}{\Rightarrow}	
\theoremstyle{plain}
\newtheorem{proposition}{Proposition}[section]
\newtheorem{theorem}[proposition]{Theorem}
\newtheorem{lemma}[proposition]{Lemma}
\newtheorem{corollary}[proposition]{Corollary}
\newtheorem*{theorem*}{Theorem}
\newtheorem{thm}{Theorem}[section]
\theoremstyle{definition}
\newtheorem{definition}[proposition]{Definition}
\newcommand{\g}{\frk g}
\newcommand{\ad}{\rm{ad}}
\newcommand{\Der}{\mathtt{Der}}
\newcommand{\Aut}{\mathtt{Aut}}
\numberwithin{equation}{section}
\title{Metric Lie groups admitting dilations}
\author[Le Donne]{Enrico Le Donne}
\address[Le Donne]{Department of Mathematics and Statistics, University of Jyväskylä, Finland; and: Dipartimento di Matematica, Università di Pisa, Italy}
\email{ledonne@msri.org}
\author[Nicolussi Golo]{Sebastiano Nicolussi Golo}
\address[Nicolussi Golo]{Dipartimento di Matematica, Università di Padova, Italy}
\email{sebastiano2.72@gmail.com}
\thanks{E.L.D.~has been partially supported by the Academy of Finland (grant
288501
`\emph{Geometry of subRiemannian groups}')
and by the European Research Council
 (ERC Starting Grant 713998 GeoMeG `\emph{Geometry of Metric Groups}').
 S.N.G.~has been partially supported 
 	by the European Unions Seventh Framework Programme, Marie Curie Actions-Initial Training Network, under grant agreement n. 607643, ``Metric Analysis For Emergent Technologies (MAnET)'',
 	by the EPSRC Grant "Sub-Elliptic Harmonic Analysis" (EP/P002447/1),
	and by University of Padova STARS Project "Sub-Riemannian Geometry and Geometric Measure Theory Issues: Old and New".
	}
\subjclass{54E40, 
	53C30, 
	54E45, 
	}
\keywords{Homothety, metric Lie group, grading}
\date{\today}
\begin{document}
\begin{abstract}
		
We consider left-invariant distances $d$ on a Lie group $G$
with the property that there exists 
a multiplicative one-parameter group of Lie automorphisms
$(0,\infty)\to\Aut(G)$, $\lambda\mapsto\delta_\lambda$,
so that $ d(\delta_\lambda x,\delta_\lambda y) = \lambda d(x,y)$,
for all $x,y\in G$ and all $\lambda>0$.

First, we show that all such distances are admissible, that is, they induce the manifold topology.
Second, we characterize multiplicative one-parameter groups of Lie automorphisms that are dilations for some left-invariant distance in terms of algebraic properties of their infinitesimal generator.

Third, we show that an admissible left-invariant distance on a Lie group with at least one nontrivial dilating automorphism is biLipschitz equivalent to one that admits a one-parameter group of dilating automorphisms.
Moreover, the infinitesimal generator can be chosen to have spectrum in $[1,\infty)$.
Fourth, we characterize the automorphisms of a Lie group that are a dilating automorphisms for some admissible distance. 

Finally, we characterize metric Lie groups admitting a one-parameter group of dilating automorphisms as the only
locally compact, isometrically homogeneous metric spaces with metric dilations of all factors.
Such metric spaces appear as tangents of doubling metric spaces with unique tangents.

\end{abstract}
\maketitle

\setcounter{tocdepth}{1}
\phantomsection
\addcontentsline{toc}{section}{Contents}
\tableofcontents

\section{Introduction}

Lie groups endowed with a left-invariant distance
 that admits a metric dilation or a one-parameter family of metric dilations 
appear in several mathematical contexts.
Carnot groups 
 offer a non-commutative version of normed vector spaces and they appear as asymptotic cones of finitely generated groups with polynomial growth and as tangents of sub-Riemannian manifolds \cite{%
MR623534,
MR741395,
MR1421823,
MR1867362,
MR3267520,
MR3742567
}. 
Homogeneous groups are a further generalization.
They are simply connected metric Lie groups whose Lie algebra is graded, and they are endowed with a one-parameter family of diagonal dilations, that is, dilating automorphisms of the form $\delta_\lambda(x_1,x_2,\dots,x_n) = (\lambda^{w_1}x_1,\lambda^{w_2}x_2,\dots,\lambda^{w_n}x_n)$, see Example~\ref{sec12070916}.
Homogeneous groups appear in the study of PDE and singular integrals
\cite{MR657581,MR0442149,MR3469687}.

However, these cases don't exhaust all metric Lie groups admitting 
dilations.
There are indeed distances, already on the Abelian $\R^2$, that are not quasisymmetric to any of the homogeneous distances listed above, 
but they do admit a one-parameter family of dilations, see~\cite[Section~6]{MR2116315} and \cite{MR3180486}.
The additional complication is given by having dilations that can't be diagonalized, as in Example~\ref{ex:non-diagonal} below.
Following \cite{MR0353210}, these metric Lie groups appear as visual boundaries of homogeneous negatively curved manifolds, equipped with parabolic visual distances as introduced by Hamenstadt, see \cite{MR1016663, MR1476054}. 

\bigskip

Suppose that $G$ is a Lie group and $d$ is a left-invariant distance on $G$ that admits 
a multiplicative one-parameter group of Lie automorphisms
 $(0,\infty)\to\Aut(G)$, $\lambda\mapsto\delta_\lambda$
 so that
 \begin{equation}\label{eq12061454}
 d(\delta_\lambda x,\delta_\lambda y) = \lambda d(x,y)
 \qquad\forall x,y\in G,\ \forall \lambda>0 .
 \end{equation}
A multiplicative one-parameter group $(0,\infty)\to\Aut(G)$ 
is determined by a derivation $A\in\Der(\frk g)$ of the Lie algebra $\frk g$ of $G$ such that 
\begin{equation}\label{eq12061455}
(\delta_\lambda)_* = \lambda^A := e^{(\log\lambda)A} .
\end{equation}
Such $A$ is the infinitesimal generator of $\lambda\mapsto\delta_\lambda$ and 
we say that $d$ is \emph{$A$-homogeneous}.

If a left-invariant distance $d$ induces the manifold topology on $G$, then we say that $d$ is \emph{admissible} and that $(G,d)$ is a metric Lie group.
We don't require a priori that an $A$-homogeneous distance is admissible nor that $G$ is connected.
Instead, 
we prove in our first theorem that this is necessarily true.
Theorem~\ref{thm11171734} is proven in Section~\ref{sec11072237}.

\begin{thm}\label{thm11171734}
	Let $G$ be a Lie group with Lie algebra $\frk g$.
	Assume that $A\in\Der(\frk g)$ is so that $(0,\infty)\to\Aut(\frk g)$, $\lambda\mapsto\lambda^A$, defines a one-parameter group of Lie group automorphisms $\lambda\mapsto\delta_\lambda\in\Aut(G)$ with $(\delta_\lambda)_* = \lambda^A$.
	If an $A$-homogeneous distance on $G$ exists then it is  admissible and so  $G$ is connected.
\end{thm}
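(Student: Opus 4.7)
The plan is to establish admissibility first; connectedness then follows quickly. Because $d$ is left-invariant and so is the manifold topology, it suffices to show that the two topologies share a neighborhood basis at the identity $e$. Concretely, I must prove (A)~every manifold neighborhood of $e$ contains a metric ball $B^d(e,r)$, and (B)~every metric ball $B^d(e,r)$ contains a manifold neighborhood of $e$.

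My first move is a purely algebraic consequence of the dilation identity: if $v\in\frk g$ and $\lambda_0\neq 1$ satisfy $\lambda_0^A v=v$, then $\delta_{\lambda_0}(\exp v)=\exp(\lambda_0^A v)=\exp v$, so
\[
 d(\exp v,e)=d(\delta_{\lambda_0}(\exp v),e)=\lambda_0\,d(\exp v,e),
\]
which forces $\exp v=e$. This rules out eigenvalues of $A$ on the imaginary axis: an eigenvalue $i\beta$ would provide a full real $2$-plane $V\subset\frk g$ on which $\lambda_0^A$ is the identity for $\lambda_0=e^{2\pi/\beta}\neq 1$, forcing $\exp(V)=\{e\}$ and contradicting that $\exp$ is a local diffeomorphism at $0$. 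Next, along a real eigenvector $v$ of eigenvalue $\alpha$, the scaling identity gives $d(e,\exp sv)=s^{1/\alpha}d(e,\exp v)$ for all $s>0$; applying the triangle inequality to $e,\exp(s_1v),\exp(s_2v)$ yields $(s_2-s_1)^{1/\alpha}\le s_1^{1/\alpha}+s_2^{1/\alpha}$, which fails for $s_1$ close to $s_2$ when $\alpha<0$. A Jordan-block variant handles complex eigenvalues with negative real part. Together, these algebraic steps place the spectrum of $A$ inside $\{\Re z>0\}$.

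With the spectrum of $A$ in $\{\Re z>0\}$ one has $\|\lambda^A\|\to 0$ in operator norm on $\frk g$ as $\lambda\to 0^+$, and $\delta_\lambda\circ\exp=\exp\circ\lambda^A$ transports this to the statement that $\delta_\lambda$ shrinks every compact manifold neighborhood of $e$ into any prescribed manifold neighborhood of $e$ for $\lambda$ small enough. Using this I intend to build an admissible homogeneous gauge $\|\cdot\|_A$ on $\frk g$ (defined implicitly by $\|t^A u\|_A=t$ when $|u|_E=1$ for a Euclidean reference norm) and compare $d$ with the left-invariant distance obtained from $\|\cdot\|_A$ via the exponential map. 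The scaling identity pins down $d$ along eigen-curves, and the triangle inequality, together with an invariance argument using $\delta_\lambda$, propagates control from those curves to a full manifold neighborhood of $e$, delivering both (A) and (B). The main technical obstacle is precisely this propagation when $A$ has Jordan blocks or genuinely complex eigenvalues: no clean eigendecomposition is available, and one must control $d$ uniformly along the flow of $\lambda^A$ rather than along rigid eigenlines.

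Once admissibility is in hand, connectedness is immediate. For any $x\in G$, the dilation identity gives $d(\delta_\lambda x,e)=\lambda d(x,e)\to 0$ as $\lambda\to 0^+$, so by admissibility $\delta_\lambda(x)\to e$ in the manifold topology. The identity component $G^0$ is a manifold neighborhood of $e$, so $\delta_\lambda(x)\in G^0$ for small $\lambda$; since $\delta_\lambda$ is a Lie automorphism it preserves $G^0$, whence $x=\delta_{1/\lambda}(\delta_\lambda(x))\in G^0$. Therefore $G=G^0$ is connected.
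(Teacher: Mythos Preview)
Your architecture matches the paper's (spectrum of $A$ in the right half-plane, then compare the two topologies at $e$, then connectedness), and the real-eigenvalue and purely-imaginary cases of your spectrum analysis are fine. But the step you yourself flag as ``the main technical obstacle'' is a genuine gap, not a detail. Your plan for (B) is to pin down $d$ along eigen-curves and propagate by the triangle inequality; this works only if $A$ is diagonalizable over $\R$. When $A$ has a nontrivial Jordan block or a complex eigenvalue, there is no spanning family of real eigenvectors, the orbit $\lambda\mapsto\exp(\lambda^A v)$ is a curve rather than the ray $\exp(\R_{>0}v)$, and you have no control over $d$ off those orbits. The gauge $\|\cdot\|_A$ you propose is well-defined, but ``comparing'' it with $d$ is precisely the statement you are trying to prove.

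The paper sidesteps diagonalizability entirely with the map (Lemma~\ref{lem08211414})
\[
\phi(t_1,\dots,t_n)=\prod_{j=1}^n\bigl(\exp(v_j)\cdot\delta_{e^{t_j}}\exp(v_j)^{-1}\bigr),
\]
for an arbitrary basis $v_1,\dots,v_n$ of $\frk g$. Each factor satisfies $d\bigl(e,\exp(v_j)\,\delta_{e^{t_j}}\exp(-v_j)\bigr)\le d(e,\exp v_j)+e^{t_j}d(e,\exp(-v_j))$, so $\phi$ maps a bounded cube into a fixed $d$-ball regardless of whether $v_j$ is an eigenvector. One then computes $\partial_{t_j}\phi(0)\equiv-Av_j$ modulo higher layers of the grading induced by $A$; since $A$ is invertible, $D\phi(0)$ is onto and $\phi$ covers a manifold neighborhood of $e$. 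This delivers (B). Direction (A) is then derived \emph{from} (B), via continuity of $d$ and a compactness argument (Lemma~\ref{lem08161113}); it is not independent as your outline suggests. Two smaller points: your ``Jordan-block variant'' for complex eigenvalues with negative real part is not a routine variant of the real case---the paper's case (5) in Proposition~\ref{prop08160956} requires a separate spiral argument in the abelian $2$-plane---and the order of your last two steps should be reversed: one first proves $\tau_d=\tau_G$ on the identity component $G^\circ$, and only then shows $G=G^\circ$, since without connectedness a ball $B^d(e,r)$ may a priori meet other components and your statement (A) is not yet available on all of $G$.
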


A derivation $A\in\Der(\frk g)$ induces a \emph{real grading} of $\frk g$, i.e., a splitting $\frk g = \bigoplus_{t\in\R}V_t$ with $[V_t,V_s]\subset V_{t+s}$, by means of the generalized eigenspaces of $A$.
In other words, after choosing a basis of $\frk g$ so that $A$ is in Jordan normal form, the blocks corresponding to eigenvalues with real part equal to $t$ determine the space $V_t$, see Proposition~\ref{prop12061739}.
Nonetheless, the derivation $A$ carries more structure 
 than just the grading, since $A$ may not be diagonalizable on $\R$, nor on $\C$. 

In our second result 
we characterize  when $A$-homogeneous distances exist.
Theorem~\ref{thm12041046} is proven in Section~\ref{sec12061717}.

\begin{thm}\label{thm12041046}
	Let $A$ be a derivation on the Lie algebra of a Lie group $G$
	with induced grading $\bigoplus_{t\in\R}V_t$.
	The following are equivalent:
	\begin{enumerate}[label=(\roman*)]
	\item\label{thm12041046item1}
	There exists an $A$-homogeneous distance on $G$;
	\item\label{thm12041046item2}
	The Lie group $G$ is connected and simply connected,
	each layer
	$V_t=\{0\}$ for all $t<1$ and
	the restriction $A|_{V_1}$ is diagonalizable over $\C$.
	\end{enumerate}
	In particular, if there exists an $A$-homogeneous distance on $G$, then $G$ is nilpotent.
\end{thm}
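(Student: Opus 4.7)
My plan is to prove (i) $\Leftrightarrow$ (ii) by separate arguments. Nilpotency will be an immediate corollary of (ii): the grading $\frk g=\bigoplus_{t\ge 1} V_t$ together with $[V_s,V_t]\subset V_{s+t}$ and finite dimensionality force $\ad_v$ to be nilpotent for every $v\in\frk g$, so $\frk g$ is nilpotent by Engel's theorem.

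For the direction (i) $\Rightarrow$ (ii), I would first invoke Theorem~\ref{thm11171734} for admissibility of the distance, which gives connectedness of $G$. Simple connectivity then follows because the map $(x,\lambda)\mapsto\delta_\lambda x$ continuously contracts $G$ onto $\{e\}$: since $d(\delta_\lambda x,e)=\lambda\,d(x,e)\to 0$ as $\lambda\to 0^+$. For the vanishing $V_t=\{0\}$ when $t<1$, I would start with a real eigenvector $v\in V_t$ of $A$ with eigenvalue $t$ and use the identity
\[
d(e,\exp(\mu v))=\mu^{1/t}\,d(e,\exp v),\quad \mu>0,
\]
derived from $\lambda^A v=\lambda^t v$ and $A$-homogeneity, to extract a contradiction in each range: $t<0$ conflicts with continuity of $d$ at $\mu=0$; $t=0$ forces $d(e,\exp v)=0$ identically along the one-parameter subgroup; and $0<t<1$ combined with the triangle inequality along the abelian subgroup $\exp(\R v)$, applied to $\exp(nv)=\exp(v)^n$, gives $n^{1/t}\le n$ for every positive integer $n$, forcing $t\ge 1$. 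The complex-eigenvalue and Jordan cases follow by the same strategy using asymptotics $\|\lambda^A v\|\asymp\lambda^t$ up to polylogarithmic factors; a nontrivial Jordan block inside $V_1$ would produce a genuine $\log\lambda$ factor incompatible with exact $\lambda$-scaling, yielding diagonalizability of $A|_{V_1}$.

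For the direction (ii) $\Rightarrow$ (i), one must explicitly construct an $A$-homogeneous distance. Since $G$ is simply connected nilpotent, $\exp:\frk g\to G$ is a diffeomorphism. The diagonalizability of $A|_{V_1}$ over $\C$ implies that the real canonical form of $A|_{V_1}$ decomposes into blocks which are the scalar $\lambda$ times rotations, so one can choose an inner product on $V_1$ with respect to which $\lambda^A|_{V_1}$ is conformal of ratio $\lambda$. I would use this as the norm on the left-invariant distribution generated by $V_1$ and combine it with gauge-type (``box'') contributions from the higher layers $V_t$ for $t>1$, to produce a candidate distance homogeneous by construction.

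The main obstacle is precisely this construction: showing that the candidate function is a genuine distance (not merely a quasi-distance), that it induces the manifold topology, and that it scales exactly by $\lambda$ under $\delta_\lambda$. The two hypotheses in (ii) are calibrated exactly to enable this: diagonalizability of $A|_{V_1}$ provides exact conformal scaling on the horizontal part (rather than merely up to bounded factors), while $V_t=\{0\}$ for $t<1$ forces every transversal ``snowflake exponent'' $1/t$ to lie in $(0,1]$, the regime in which subadditivity can be preserved. Verifying the triangle inequality globally --- especially when $V_1$ fails to bracket-generate $\frk g$, so a pure sub-Finsler construction on $V_1$ does not by itself give a finite distance on $G$ --- is where the bulk of the technical work will lie.
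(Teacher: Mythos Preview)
Your argument for (i)$\Rightarrow$(ii) has a genuine gap. The elegant $n^{1/t}\le n$ trick and the identity $d(e,\exp(\mu v))=\mu^{1/t}d(e,\exp v)$ both require a real vector $v$ with $Av=tv$. But $V_t$ is the span of generalized eigenspaces for eigenvalues with \emph{real part} $t$; when those eigenvalues are non-real, or real but sitting in a nontrivial Jordan block, no such eigenvector exists. Your fallback---``the complex-eigenvalue and Jordan cases follow by the same strategy using asymptotics $\|\lambda^A v\|\asymp\lambda^t$''---conflates an auxiliary Euclidean norm with the distance $d$: you only control how $d$ transforms under $\delta_\lambda$, not under linear scaling $v\mapsto\mu v$, so $\|\lambda^Av\|\asymp\lambda^t$ yields no estimate on $d(e,\exp(\mu v))$. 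The paper sidesteps this via Ahlfors regularity: once $d$ is admissible, $(G,d)$ is $Q$-regular with $Q=\sum_t t\dim V_t$, and quotienting by $\bigoplus_{t>t_m}V_t$ (where $t_m=\min\{t:V_t\neq 0\}$, already known to be positive) gives $\dim V_{t_m}=\dim_{\mathrm{top}}\le\dim_{\mathrm{Haus}}=t_m\dim V_{t_m}$, whence $t_m\ge 1$. Your eigenvector argument \emph{can} be rescued by first replacing $A$ with $A-A_I$ to make the spectrum real---the paper itself uses this reduction for the diagonalizability step---but that requires its own lemma (that the new derivation still admits a homogeneous distance). The same gap affects your diagonalizability claim: ``a $\log\lambda$ factor incompatible with exact $\lambda$-scaling'' is the right intuition but not a proof, since again you have no a priori control of $d$ along non-eigendirections. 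The paper makes it concrete by quotienting to $(\R^2,+)$ with $\hat A=\left(\begin{smallmatrix}1&1\\0&1\end{smallmatrix}\right)$ and exhibiting points $n\lambda^m(m\log\lambda\cdot y,\,y)$ with $n\approx\lambda^{-m}$ that stay $d$-bounded yet escape every compact set, contradicting admissibility.

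For (ii)$\Rightarrow$(i) your plan is plausible and, as you acknowledge, the work lies in the triangle inequality. The paper's construction differs from your box-gauge idea: it proceeds by induction on the number of nonzero layers, building the closed unit ball as the preimage of an inductively constructed ball on $G/\exp(V_{t_{\max}})$ intersected with a norm-bound on the top layer, and then verifying $A$-convexity of that set via explicit BCH estimates and a carefully chosen Hermitian norm on $\frk g$ for which $\|\lambda^A|_{W_t}\|\le\lambda^{t-\theta}$. Separate arguments handle the abelian base case and the delicate two-layer case with top degree $\le 2$, where the nilpotent part of $A$ interacts with the Lie bracket.
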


The implication from \ref{thm12041046item1} to \ref{thm12041046item2} is based on
known facts about contracting autormophisms (e.g., from \cite{MR812604}) and 
an example in $\R^2$ that was already present in \cite[Section~6]{MR2116315}, see also Examples~\ref{ex:non-diagonal} and~\ref{ex:without_distances} in this paper.
In the proof of \ref{thm12041046item2} implying \ref{thm12041046item1}, instead,
one needs to construct an $A$-homogeneous distance.
In the case $A$ is diagonalizable over $\R$,
this has been done already by Hebisch and Sikora, see \cite{MR1067309}.
Our construction is inspired by theirs.

Following \cite{MR812604}, we show that the presence of a single dilating automorphism already gives strong restrictions on the setting, as we next explain. 
A \emph{dilation of factor $\lambda$} of a metric space $(X,d)$ is a bijection $\delta:X\to X$ such that 
\[
d(\delta x,\delta y) = \lambda d(x,y)
\qquad \forall x,y\in X .
\]
We say that $\delta$ is \emph{nontrivial} if $\lambda\neq1$.
We recall from \cite{2017arXiv170509648C} that,
if $(X,d)$ is a metric Lie group and $\delta:(X,d)\to (X,d)$ is a nontrivial dilation,
then there is a unique simply connected nilpotent metric Lie group $(G,d)$ that is isometric to $(X,d)$.
Moreover, in this metric Lie group, and only in this Lie group structure, the dilation $\delta$ is a Lie group automorphism.
In this case, we call $(G,d,\delta,\lambda)$ a \emph{self-similar metric Lie group}.

Our third result explains the connection between homogeneous distances and self-similar distances.
Theorem~\ref{thm12061648} is proven in Section~\ref{sec12061813}.

\begin{thm}\label{thm12061648}
	If $(G,d,\delta,\lambda)$ is a self-similar metric Lie group,
	then there is $A\in\Der(\frk g)$ with eigenvalues belonging to $[1,\infty)$ and an $A$-homogeneous distance $d'$ on $G$ such that $\delta$ is also a dilation of factor $\lambda$ for $d'$.
	Moreover, for any such $A$ and $d'$, the identity map $(G,d)\to(G,d')$ is biLipschitz.
\end{thm}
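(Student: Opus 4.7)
The plan is to extract from $\phi := \delta_* \in \Aut(\g)$ a derivation $A$ with real spectrum in $[1,\infty)$, invoke Theorem~\ref{thm12041046} to obtain an $A$-homogeneous distance $d''$, and then average $d''$ over a compact subgroup of $\Aut(G)$ to produce $d'$ under which $\delta$ itself, not merely $\delta_\lambda := \lambda^A$, is a $\lambda$-dilation. Assume $\lambda>1$ without loss of generality (replace $\delta$ with $\delta^{-1}$ otherwise).

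First I pin down the spectrum of $\phi$: every complex eigenvalue $\mu$ satisfies $|\mu|\geq\lambda$, and $\phi$ is semisimple on the generalized eigenspace $W_\lambda$ of eigenvalues of modulus exactly $\lambda$. The modulus bound follows from $\delta\circ\exp=\exp\circ\phi$ and left-invariance of $d$: the function $f(t):=d(\exp(tv),e)$ attached to a real positive eigenvector $v$ of eigenvalue $\mu$ is subadditive and satisfies $f(\mu^n t)=\lambda^n f(t)$, which combined with $f(\mu^n t)\leq \lceil \mu^n\rceil f(t)$ forces $\mu\geq\lambda$; the complex case reduces to the $2$-dimensional real invariant subspace of the conjugate pair. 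Semisimplicity on $W_\lambda$ follows in the spirit of (i)$\Rightarrow$(ii) of Theorem~\ref{thm12041046}: a Jordan block at $\mu$ with $|\mu|=\lambda$ produces $\phi^n v\sim n\lambda^{n-1}v'$, and the asymptotic linearity of $s\mapsto d(\exp(sv'),e)$ then forces $d(\exp(\phi^n v),e)\asymp n\lambda^{n-1}$, contradicting the identity $d(\exp(\phi^n v),e)=\lambda^n d(\exp(v),e)$.

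Grouping generalized eigenspaces of $\phi$ by modulus yields a real grading $\g=\bigoplus_{t\geq 1}V_t$ with $V_1=W_\lambda$ (the bracket respects it by multiplicativity of eigenvalues). Take the multiplicative Jordan decomposition $\phi=\phi_s\phi_u$ in the algebraic group $\Aut(\g)$; both factors lie in $\Aut(\g)$, and since $\phi_u$ is unipotent it embeds in a one-parameter subgroup $t\mapsto e^{tN}\subset\Aut(\g)$, so $N:=\log\phi_u$ is a nilpotent derivation. Letting $D$ be the grading derivation ($D|_{V_t}=t\Id$), set $A:=D+(\log\lambda)^{-1}N\in\Der(\g)$; its eigenvalues are $\{t\geq 1:V_t\neq 0\}$, and semisimplicity of $\phi$ on $V_1$ forces $\phi_u|_{V_1}=\Id$, so $A|_{V_1}=\Id_{V_1}$ is diagonal. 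Theorem~\ref{thm12041046} then supplies an $A$-homogeneous distance $d''$. Writing $\phi=\Phi\cdot\lambda^A$ with $\Phi:=\phi_s\lambda^{-D}$ semisimple of unit-circle eigenvalues and commuting with $\lambda^A$, the closure $\tilde K\subset\Aut(G)$ of the cyclic group generated by $\Phi$ is a compact abelian subgroup commuting with $\delta_\lambda$. I define
\[
d'(x,y):=\sup_{k\in\tilde K} d''(k(x),k(y)).
\]
Compactness of $\tilde K$ gives finiteness and, via continuity of the $\tilde K$-action, admissibility; symmetry, triangle inequality, positivity, and left-invariance (from $k(y^{-1}x)=k(y)^{-1}k(x)$) pass through the supremum; the commutation $[\tilde K,\delta_\lambda]=0$ transfers $A$-homogeneity from $d''$ to $d'$; and $\tilde K$-invariance of $d'$ together with $\delta=\Psi\circ\delta_\lambda$ for $\Psi\in\tilde K$ yields $d'(\delta x,\delta y)=\lambda d'(x,y)$.

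For the \emph{moreover} part both $d$ and $d'$ are admissible left-invariant distances, so locally comparable near $e$: there are $c_1,c_2>0$ and a neighborhood $U$ of $e$ with $c_1 d\leq d'\leq c_2 d$ on $U$; given any $g\in G$, pick $n\in\Z$ with $\delta^{-n}g\in U$ and use that $\delta$ scales both distances by exactly $\lambda$ to propagate the comparison globally, with left-invariance turning it into a biLipschitz bound on all pairs. I expect the main obstacle to be Step~1, specifically excluding Jordan blocks on $W_\lambda$: converting the polynomial-in-$n$ growth of $\|\phi^n v\|$ into a genuine contradiction with the rigid $\lambda^n$-scaling of $d$ requires careful handling of how the exponential map transfers Lie-algebra growth to metric growth, and a judicious choice of test vector. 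A secondary, more algebraic point is verifying that $N=\log\phi_u$ is a derivation, which rests on the algebraic-group structure of $\Aut(\g)$ and the embedding of unipotent elements in one-parameter subgroups.
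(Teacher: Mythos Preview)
Your approach is sound and reaches the same conclusion, but it differs from the paper's in one structurally important way, and your Step~1 needs more than you indicate.

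The paper does \emph{not} invoke Theorem~\ref{thm12041046} to produce the $A$-homogeneous distance. Instead, after writing $\delta_*=K\lambda^A$ with $K$ semisimple, $\sigma(K)\subset\bb S^1$, $\sigma(A)\subset\R$ and $[K,A]=0$ (this is Lemma~\ref{lem10271236}, equivalent to your multiplicative Jordan decomposition), it modifies the \emph{given} distance $d$: first average $d$ over $\scr K=\overline{\langle K\rangle}$ via Lemma~\ref{lem10290953} to get a $\scr K$-invariant $d'$ for which $\lambda^A$ is already a dilation of factor $\lambda$; then set
\[
d''(x,y)=\sup_{\mu>0}\mu^{-1}d'(\mu^Ax,\mu^Ay)=\max_{\mu\in[1,\lambda]}\mu^{-1}d'(\mu^Ax,\mu^Ay),
\]
which is $A$-homogeneous by construction. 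The spectral bound $\sigma(A)\subset[1,\infty)$ is then read off \emph{a posteriori} from Theorem~\ref{teo08221641} applied to $d''$. This sidesteps entirely the need to verify in advance that $A|_{V_1}$ is diagonalizable.

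Your route requires that diagonalizability as input to Theorem~\ref{thm12041046}, and your sketch for it has a gap. The claim ``$\phi^n v\sim n\lambda^{n-1}v'$ forces $d(\exp(\phi^n v),e)\asymp n\lambda^{n-1}$'' implicitly treats $\Span\{v,v'\}$ as abelian: for a $2\times 2$ block at real $\mu=\lambda$ one has $\phi^n v=\lambda^n v+n\lambda^{n-1}v'$, and to separate the two contributions via the triangle inequality you need $\exp(\lambda^n v+n\lambda^{n-1}v')=\exp(\lambda^n v)\exp(n\lambda^{n-1}v')$, which fails once $[v,v']\neq 0$. The fix is to quotient by the ideal $\bigoplus_{t>1}V_t$ using Lemma~\ref{lem12041918} (the grading and the bound $t\ge 1$ are available from Proposition~\ref{prop08220937} and Theorem~\ref{teo08221641}); the quotient is abelian and your argument then goes through essentially as in Example~\ref{ex:without_distances}. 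The complex-eigenvalue case of your modulus bound has the same issue and the same fix. Once you add this quotient step your proof is complete, but note that you are then essentially reproving the direction \ref{thm12061651item1}$\Rightarrow$\ref{thm12061651item2} of Theorem~\ref{thm12061651}, which in the paper's logical order comes \emph{after} Theorem~\ref{thm12061648}; the paper's direct construction from $d$ is what lets it keep that order.

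Your biLipschitz argument is the same as the paper's Lemma~\ref{lem10291010}.
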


Theorem~\ref{thm12061648} applies also to distances that are already $A$-homogeneous and states that, up to a biLipschitz change of the distance, we can assume the spectrum of $A$ to be real. This biLipschitz change is in fact necessary, see Proposition~\ref{prop12271522}.
We remark that one cannot reduce to the case when the derivation $A$ is diagonalizable.
For instance, the distances presented in Section~\ref{ex:non-diagonal} 
are not biLipschitz or even quasi-conformally equivalent to any homogeneous distance with diagonalizable dilating automorphisms (see also \cite[Section~6]{MR2116315} and \cite{MR3180486}).

Since the construction of the derivation $A$ in Theorem~\ref{thm12061648} is done by means of the Jordan block decomposition of $\delta$, one can reinterpret Theorem~\ref{thm12041046} in terms of the dilation $\delta$ as follows.
Theorem~\ref{thm12061651} is proven in Section~\ref{sec12062041}.

\begin{thm}\label{thm12061651}
	Let $G$ be a Lie group, $\delta\in\Aut(G)$ a Lie group automorphism and $\lambda\in(0,+\infty)\setminus\{1\}$.
	The following statements are equivalent
	\begin{enumerate}[label=(\roman*)]
	\item\label{thm12061651item1}
	There is an admissible distance on $G$ for which $\delta$ is a dilation of factor $\lambda$;
	\item\label{thm12061651item2}
	The Lie group $G$ is connected and simply connected,
	the eigenvalues of $\delta_*$ have modulus smaller than or equal to $\lambda$ if $\lambda<1$, 
	greater than or equal to $\lambda$ if $\lambda>1$,
	and the complexification of $\delta_*$ is diagonalizable on the generalized eigenspaces of the eigenvalues of modulus equal to $\lambda$.
	\end{enumerate}	
\end{thm}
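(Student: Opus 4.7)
The strategy is to realize $\delta$ as the product of an ``$A$-dilation'' and an isometry for a suitable distance. The key device is a real Jordan-type decomposition of $\delta_*$: write $\delta_*=R T \delta_*^u$ with $R,T,\delta_*^u$ pairwise commuting Lie algebra automorphisms, where $R$ is semisimple with positive real eigenvalues $|\mu_j|$, $T$ is semisimple with eigenvalues on the unit circle, and $\delta_*^u$ is unipotent. (That $R$ and $T$ preserve brackets follows from the multiplicative relation $\mu_X\mu_Y=\mu_{[X,Y]}$ on eigenvectors of $\delta_*^s=RT$.) Define the derivation
\[
A := \log_\lambda R + (\log\delta_*^u)/\log\lambda,
\]
so that $\lambda^A=R\delta_*^u$ (using that $R$ and $\delta_*^u$ commute) and $\delta_*=\lambda^A T$ with $T$ commuting with $A$.

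\textbf{Direction (ii) $\Rightarrow$ (i).} Assume (ii); replacing $\delta$ by $\delta^{-1}$ if necessary, take $\lambda>1$. The eigenvalues of $A$ are $\log_\lambda|\mu_j|\geq 1$, so the induced real grading of $\g$ has $V_t=\{0\}$ for $t<1$. The layer $V_1$ is the generalized eigenspace of $\delta_*$ for eigenvalues of modulus $\lambda$, where (ii) guarantees $\delta_*$ is diagonalizable; thus $\delta_*^u|_{V_1}=I$, $R|_{V_1}=\lambda I$, and $A|_{V_1}=I$, which is diagonalizable. Since $G$ is simply connected by (ii), Theorem~\ref{thm12041046} produces an $A$-homogeneous distance $d''$ on $G$. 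The automorphism $T$ lifts uniquely to $\Theta\in\Aut(G)$, whose iterates have compact closure $K\subseteq\Aut(G)$, and $K$ commutes with every $A$-dilation because $A$ and $T$ commute. Averaging $d''$ over $K$ with the Haar probability measure yields a distance $d'$ that is both $K$-invariant and $A$-homogeneous, hence admissible by Theorem~\ref{thm11171734}. Since $\delta_*=\lambda^A T$, the identity $\delta=\delta_\lambda\circ\Theta$ (with $\delta_\lambda$ the $\lambda$-dilation of $d'$) holds by simple connectedness and exhibits $\delta$ as a dilation of factor $\lambda$ for $d'$.

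\textbf{Direction (i) $\Rightarrow$ (ii).} Let $d$ be admissible with $\delta$ a dilation of factor $\lambda\neq 1$; WLOG $\lambda>1$. By the uniqueness result cited from \cite{2017arXiv170509648C}, $G$ is simply connected and nilpotent; Theorem~\ref{thm12061648} supplies $A\in\Der(\g)$ with spectrum in $[1,\infty)$ and an $A$-homogeneous distance $d'$ (biLipschitz to $d$) for which $\delta$ remains a $\lambda$-dilation, and Theorem~\ref{thm12041046} gives $A|_{V_1}=I$. Set $\rho(v):=d'(\exp v, e)$. The $A$-homogeneity $\rho(\lambda^A v)=\lambda\rho(v)$ and the dilation identity $\rho(\delta_* v)=\lambda\rho(v)$ combine to give $\rho(\sigma v)=\rho(v)$ for $\sigma:=\delta_*\lambda^{-A}\in\Aut(\g)$, and hence $\rho(\sigma^n v)=\rho(v)$ for all $n\in\Z$. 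Properness of $\rho$ then implies $\{\sigma^n v\}$ is bounded for each $v$, so $\{\sigma^n\}$ is bounded in $\GL(\g)$ and $\sigma$ is semisimple with eigenvalues of modulus $1$. A blow-up argument using that the leading order of $\rho(\epsilon v)$ for $v\in V_s$ is $\epsilon^{1/s}$ (up to logarithmic correction from Jordan blocks), combined with $\rho(\epsilon\sigma v)=\rho(\epsilon v)$, forces $\sigma v\in V_s$; hence $\sigma$ preserves the grading. Consequently $\delta_*|_{V_t}=\lambda^{A|_{V_t}}\sigma|_{V_t}$ has spectral radius $\lambda^t\geq\lambda$, and on $V_1$, $\delta_*|_{V_1}=\lambda\cdot\sigma|_{V_1}$ is semisimple, yielding the diagonalizability claim in (ii).

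\textbf{Main obstacle.} The hardest step is the blow-up argument showing that $\sigma$ preserves the $A$-grading: when $A|_{V_t}$ has nontrivial Jordan blocks for some $t>1$, the asymptotics of $\rho$ acquire logarithmic corrections, and matching leading orders of $\rho(\epsilon\sigma v)$ and $\rho(\epsilon v)$ requires careful bookkeeping to rule out that $\sigma$ mixes distinct layers. A companion subtlety in (ii) $\Rightarrow$ (i) is the necessity of absorbing $\log\delta_*^u$ into the definition of $A$: the naive choice $A:=\log_\lambda R$ would leave a residual unipotent automorphism $\delta\circ\delta_\lambda^{-1}$, which cannot be an isometry of any admissible distance.
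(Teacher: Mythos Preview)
Your Setup and the direction (ii)$\Rightarrow$(i) are essentially the paper's approach: your decomposition $\delta_*=\lambda^A T$ with $[A,T]=0$ is exactly Lemma~\ref{lem10271236}, and the averaging over the compact group generated by $T$ is Lemma~\ref{lem10290953} (the paper uses a $\max$ rather than Haar integration, but either works).

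The detour in (i)$\Rightarrow$(ii), however, creates an artificial difficulty. You invoke Theorem~\ref{thm12061648} as a black box to obtain some $A$ and an $A$-homogeneous $d'$, then set $\sigma=\delta_*\lambda^{-A}$ and attempt a blow-up argument to show $\sigma$ preserves the $A$-grading. But the $A$ produced by Theorem~\ref{thm12061648} (via Lemma~\ref{lem12061951}) \emph{is} your Setup $A$, so that $\sigma=T$ and $[\sigma,A]=0$ holds by construction. This commutation immediately forces $\sigma$ to preserve each $V_t$ and makes your spectral-radius claim $\delta_*|_{V_t}=\lambda^{A|_{V_t}}\sigma|_{V_t}$ legitimate (without commutation, eigenvalues of a product are not products of eigenvalues, so the step ``has spectral radius $\lambda^t$'' would be unjustified). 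The paper's proof simply uses this: since $K$ is diagonalizable, $[K,A]=0$, and $A|_{V_1}$ is diagonalizable by Theorem~\ref{thm12041046}, the product $\delta_*|_{V_1}=K|_{V_1}\lambda^{A|_{V_1}}$ is a product of commuting diagonalizable operators, hence diagonalizable.

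Your blow-up sketch, taken on its own terms, is also incomplete. The asymptotic $\rho(\epsilon v)\sim\epsilon^{1/s}$ as $\epsilon\to0$ for $v\in V_s$ only detects the \emph{largest} layer index appearing in a vector, so it yields that $\sigma$ preserves the filtration $\bigoplus_{t\le s}V_t$, not the grading itself. (One can repair this by also letting $\epsilon\to\infty$, which detects the smallest layer index; for the particular conclusion about $V_1$ this is not even needed since $V_1$ is the bottom layer.) In short: drop the blow-up, use the commutation you already established in your Setup, and the obstacle disappears.
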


Theorem~\ref{thm12061651} implies that any contracting automorphism of a connected Lie group is a dilation of a suitable factor $0<\lambda<1$ for some admissible distance.

If a distance admits a dilation of factor $\lambda$ for every $\lambda>0$, then we shall call it a \emph{homothetic distance}.
We recall from \cite{MR3646026} that isometries of nilpotent metric Lie groups are Lie group isomorphisms up to  left translations.
It follows that also metric dilations of nilpotent metric Lie groups are Lie group automorphisms up to  left translations.
Consequently, one can show that 
any homothetic admissible distance on a nilpotent Lie group $G$ is $A$-homogeneous for some derivation $A$, see Proposition~\ref{prop11071818}.
Together with Theorem~\ref{thm11171734}, this implies 
that
{\itshape
a left-invariant distance on a  nilpotent Lie group  is $A$-homogeneous for some derivation $A$ if and only if it is admissible and homothetic.}

This discussion allows us to prove a characterization in the spirit of the ones presented in~\cite{MR3283670} and in~\cite{2017arXiv170509648C}.
A statement similar in spirit to Theorem~\ref{thm12061855} can be found in the work of Buliga~\cite{MR2610783}.
Theorem~\ref{thm12061855} is proven in Section~\ref{buon_natale}.

\begin{thm}\label{thm12061855}
	If $X$ is a locally compact, isometrically homogeneous and homothetic metric space,
	then there 
	are a unique Lie group $G$, a derivation $A$ on its Lie algebra and 
	an $A$-homogeneous distance $d$ on $G$
	such that $(G,d)$  is isometric to $X$.
\end{thm}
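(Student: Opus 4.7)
My plan is to deduce the theorem from Proposition~\ref{prop11071818}, which together with Theorem~\ref{thm11171734} identifies the $A$-homogeneous distances as the admissible homothetic distances on nilpotent Lie groups, combined with the rigidity for metric Lie groups with a nontrivial dilation recalled from \cite{2017arXiv170509648C} just before Theorem~\ref{thm12061648}.

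I would first use the homothety of $X$ to fix a nontrivial metric dilation $\delta$, say of factor $\lambda\in(0,1)$, and, by composing with an isometry granted by transitivity, arrange that $\delta$ fixes a base point $p$. Iterating $\delta$ produces arbitrarily strong contractions fixing $p$; together with the local compactness of $X$ and the continuity of the action of $\mathrm{Isom}(X)$, this should yield a no-small-subgroups property for $\mathrm{Isom}(X)$ and allow Gleason--Montgomery--Zippin to give $\mathrm{Isom}(X)$ the structure of a Lie group acting smoothly and transitively on $X$. At that point the rigidity of \cite{2017arXiv170509648C} produces a unique simply connected nilpotent metric Lie group $(G,d)$ isometric to $X$, in which $\delta$ becomes, up to a left translation, a Lie group automorphism.

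I would then transfer the whole family of metric dilations of $X$ to $(G,d)$. By the extension to dilations of the rigidity for isometries of nilpotent metric Lie groups of \cite{MR3646026} discussed in the paragraph preceding the theorem, each of these is a Lie group automorphism post-composed with a left translation. Hence the admissible left-invariant distance $d$ on $G$ is homothetic, and Proposition~\ref{prop11071818} supplies a derivation $A\in\Der(\frk g)$ with respect to which $d$ is $A$-homogeneous. Uniqueness of $G$ is inherited from the uniqueness in the cited rigidity of \cite{2017arXiv170509648C}.

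The step I expect to be the main obstacle is the very first one: producing the Lie group structure on $X$ from only local compactness, isometric homogeneity, and the mere existence of a nontrivial metric dilation. The rigidity of \cite{2017arXiv170509648C} takes as input an already-given metric Lie group, so manufacturing this structure here, via a no-small-subgroup argument driven by $\delta$ and an appeal to Gleason--Montgomery--Zippin, is the real analytic content of the proof and cannot be outsourced to the earlier theorems of the paper.
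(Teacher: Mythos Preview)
Your overall strategy matches the paper's: pass from the abstract metric space to a self-similar metric Lie group via \cite{2017arXiv170509648C}, deduce nilpotency, and then invoke Proposition~\ref{prop11071818} to extract the derivation $A$; uniqueness of $G$ comes from \cite{MR3646026}. The logical skeleton is correct.

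Where you diverge is in your assessment of the ``main obstacle.'' You are reading the summary of \cite{2017arXiv170509648C} that appears just before Theorem~\ref{thm12061648}, which indeed starts from a metric Lie group. But the paper also records the full result as Theorem~\ref{thm12041706}: \emph{any} locally compact, connected, isometrically homogeneous metric space admitting a nontrivial metric dilation is isometric to a self-similar metric Lie group. So the Gleason--Montgomery--Zippin\,/\,no-small-subgroups work you propose to carry out is already packaged inside Theorem~\ref{thm12041706} and need not be redone here. The paper's proof is accordingly very short: it first obtains connectedness of $X$ from \cite[Proposition~3.7]{MR2865538}, then applies Theorem~\ref{thm12041706} to get a self-similar metric Lie group $(G,d,\delta,\lambda)$, uses Theorem~\ref{teo08221641} for nilpotency, and finishes with Proposition~\ref{prop11071818} and \cite{MR3646026}. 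Your plan would reprove the content of Theorem~\ref{thm12041706} rather than cite it; that is not a gap, just unnecessary effort. One genuine ingredient you do not mention explicitly is connectedness of $X$, which is needed to invoke Theorem~\ref{thm12041706}; the paper imports this from \cite{MR2865538}.
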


We remark that in Theorems~\ref{thm12061648} and~\ref{thm12061855}
one cannot require in general that the spectrum of the derivation $A$ is real
without a biLipschitz modification of the distance. Indeed, 
we provide an $A$-homogeneous distance on $\R^2$ so that the only eigenvalue of $A$ is $2+i$ but $d$ is not $A'$-homogeneous for any $A'$ with real spectrum, see Example~\ref{sec12271414} and Proposition~\ref{prop12271522}.  
Some reduction to the real spectrum are possible in limited cases, see Proposition~\ref{prop12182017}.

We conclude with a theorem that readily follows using known results from \cite{MR2865538} and that is a generalization of \cite[Theorem 1.2]{MR2865538} to non-geodesic metric spaces.
\begin{thm}\label{thm05011958}
	Let $X$ be a metric space with a doubling measure $\mu$.
	Assume that $X$ has unique tangent at $\mu$-a.e. $p\in X$.
	Then, for $\mu$-a.e.~$p\in X$, the tangent $G_p$ of $X$ at $p$ is a Lie group endowed with a $A$-homogeneous distance, for some derivation $A$ of the Lie algera of $G_p$.
\end{thm}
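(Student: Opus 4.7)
The plan is to reduce the statement to Theorem~\ref{thm12061855} by verifying that at $\mu$-a.e.\ point $p$, the tangent $G_p$ is a locally compact, isometrically homogeneous, and homothetic metric space. Once the three hypotheses are in place, Theorem~\ref{thm12061855} directly gives the unique Lie group structure, the derivation $A$, and the $A$-homogeneous distance.

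First I would establish local compactness of $G_p$ at $\mu$-a.e.\ $p$. Since $\mu$ is a doubling measure on $X$, every pointed Gromov--Hausdorff tangent inherits a doubling structure (the doubling constant of $\mu$ passes to the tangent measure), and a complete doubling metric space is proper, hence locally compact. This step is essentially measure-theoretic bookkeeping.

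Second, and this is the substantive input from~\cite{MR2865538}, I would invoke the a.e.\ isometric homogeneity of tangents at points of unique tangency. The argument there shows that for $\mu$-a.e.\ $p$ and for any two points $q_1,q_2\in G_p$, one can realize them as limits of rescalings centered at nearby basepoints $p_n\to p$; the uniqueness-of-tangents hypothesis forces the corresponding sequences of rescaled spaces to converge to the same limit $G_p$, so the translations between basepoints yield an isometry of $G_p$ sending $q_1$ to $q_2$. This is precisely where the hypothesis ``unique tangent at $\mu$-a.e.\ point'' is used in an essential way, and it is the main obstacle in the proof: one must justify the passage to a pointed Gromov--Hausdorff limit along two different sequences and identify the limit isometries, which requires a careful application of the doubling property (to extract convergent subsequences of rescalings) together with the uniqueness assumption (to identify the limits).

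Third, the existence of metric dilations of every factor on $G_p$ comes from the self-similarity forced by uniqueness of tangents: rescaling $G_p$ by any factor $\lambda>0$ produces a pointed Gromov--Hausdorff limit of rescalings of $X$ at $p$, which by uniqueness must coincide with $G_p$ again. The identification map furnishes a bijection $\delta_\lambda: G_p\to G_p$ with $d(\delta_\lambda x,\delta_\lambda y) = \lambda d(x,y)$, so $G_p$ is homothetic. Finally, I would apply Theorem~\ref{thm12061855} to $G_p$ to obtain the Lie group $G$, the derivation $A\in\Der(\frk g)$, and the $A$-homogeneous distance $d$ with $(G,d)$ isometric to $G_p$; this is the generalization of~\cite[Theorem~1.2]{MR2865538}, which previously required geodesicity to identify the tangent with a Carnot group, whereas Theorem~\ref{thm12061855} removes that restriction and covers the non-diagonalizable, non-graded cases discussed in the introduction.
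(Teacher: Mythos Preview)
Your proposal is correct and follows essentially the same approach as the paper: the paper's proof simply cites \cite{MR2865538} (specifically Section~3.4) for the fact that at $\mu$-a.e.\ point the tangent is locally compact, isometrically homogeneous, and homothetic, and then applies Theorem~\ref{thm12061855}. You have unpacked that citation into its constituent steps, but the structure and logic are identical.
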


\subsection*{Structure of the paper}
Section~\ref{sec10222252} contains several elementary facts that we need later.
Section~\ref{sec11072237} is devoted to the proof of Theorem~\ref{thm11171734}.
Section~\ref{sec12062042} presents basic properties of self-similar metric Lie groups and homothetic distances.
Section~\ref{sec12051423} contains examples of $A$-homogeneous distances and some pathological cases.
Section~\ref{sec12061717} is devoted to the proof of Theorem~\ref{thm12041046}.
Finally, Theorems~\ref{thm12061648},~\ref{thm12061651},~\ref{thm12061855} and~\ref{thm05011958}  are proven in Section~\ref{sec12061813}.

\subsection*{Acknowledgments}
This work has been prepared during two organized meetings titled ``Summer Holiday at Mum's Place'' in 2015 and 2018.
The authors wish to thank their mothers.

\section{Algebraic preliminaries}\label{sec10222252}

\subsection{Complexifications and generalized eigenspaces}

The \emph{complexification} of a finite-dimensional real vector space $V$ is the complex vector space $V_\C$ constructed as follows.
Define $V_\C=V\oplus V$ and $J:V_\C\to V_\C$ as $J(X,Y) := (-Y,X)$.
Then $V_\C$ becomes a complex vector space by defining $i\cdot(X,Y) := J(X,Y)$, 
where $i$ is the imaginary unit.
We identify elements $X\in V$ with $(X,0)\in V_\C$ and consequently $(X,Y) = X+JY$.
We also define the complex conjugate as $(X+JY)^* := X-JY$, whenever $X,Y\in V$.
Notice that $v\in V_\C$ belongs to $V$ if and only if $v^*=v$.

If $\phi:V\to V$ is a $\R$-linear map, then its \emph{complexification} is the $\C$-linear map $\phi:V_\C\to V_\C$, $\phi(X+JY) = \phi X + J\phi Y$.
The \emph{spectrum} of $\phi$ is defined by
\[
	\sigma(\phi): = \{\alpha\in\C: \det(\phi-\alpha\Id) = 0 \} 
\]
and the \emph{generalized eigenspace} of $\phi$ corresponding to $\alpha\in\C$ by
\[
	E^\phi_\alpha: = \{v\in V_\C:\exists n\in\N \quad (\phi-\alpha\Id)^nv = 0\} .
\]
We have that
$\phi E^\phi_0\subset E^\phi_0$,
$\phi E^\phi_\alpha = E^\phi_\alpha$ if $\alpha\neq0$
and
$V_\C = \bigoplus_{\alpha\in\sigma(\phi)} E^\phi_\alpha$.
Moreover, if $\psi$ is another linear map and $[\phi,\psi]=0$, then $\psi(E^\phi_\alpha)\subset E^\phi_\alpha$ for all $\alpha$.
In particular, one can split the space $V_\C = \bigoplus_{\alpha\in\sigma(\phi),\beta\in\sigma(\psi)} E^\phi_\alpha\cap E^\psi_\beta$, where each subspace $E^\phi_\alpha\cap E^\psi_\beta$ is preserved by both maps.

\begin{lemma}\label{lem08241611}
	If $A:V_\C\to V_\C$ is a $\C$-linear map on a complex vector space $V_\C$,
	then 
	$E^A_\alpha =
	E^{e^A}_{e^\alpha}$, for all $\alpha\in \C$.
\end{lemma}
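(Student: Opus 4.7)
The plan is to exploit the nilpotent part of $A$ on each generalized eigenspace. On $E^A_\alpha$, by definition, $N := (A - \alpha I)|_{E^A_\alpha}$ is nilpotent, say $N^n = 0$, so that $A$ acts as the sum $\alpha I + N$ of two commuting operators. Since $[\alpha I, N] = 0$, the standard identity for commuting exponentials gives
\[
e^A\big|_{E^A_\alpha} = e^{\alpha I} e^N = e^\alpha\, e^N.
\]
Now $e^N - I = \sum_{k\ge 1} N^k/k! = N\cdot M$, where $M := \sum_{k\ge 0} N^k/(k+1)!$ is a polynomial in $N$ (the series terminates by nilpotency) and commutes with $N$. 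Thus on $E^A_\alpha$,
\[
(e^A - e^\alpha I)^n = e^{n\alpha} (e^N - I)^n = e^{n\alpha} N^n M^n = 0,
\]
which proves the inclusion $E^A_\alpha \subset E^{e^A}_{e^\alpha}$.

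For the reverse inclusion I would use the full spectral decomposition $V_\C = \bigoplus_{\beta \in \sigma(A)} E^A_\beta$, which is preserved by every operator commuting with $A$, in particular by $e^A$. Given $v \in E^{e^A}_{e^\alpha}$, decompose $v = \sum_\beta v_\beta$ with $v_\beta \in E^A_\beta$. By the inclusion just proved, each $v_\beta$ lies in $E^{e^A}_{e^\beta}$. Since the generalized eigenspaces of $e^A$ corresponding to distinct eigenvalues form a direct sum, the condition $v \in E^{e^A}_{e^\alpha}$ forces $v_\beta = 0$ for every $\beta \in \sigma(A)$ with $e^\beta \neq e^\alpha$. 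Hence $v = v_\alpha \in E^A_\alpha$, giving the equality (noting that if $\alpha \notin \sigma(A)$ then both sides are trivially $\{0\}$).

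The only subtlety is that the exponential map on $\C$ is not injective, so a priori several distinct $\beta \in \sigma(A)$ could produce the same $e^\beta = e^\alpha$; this is where the argument needs the cleanest care, since one must rule out contributions from $v_\beta$ with $\beta \neq \alpha$ but $e^\beta = e^\alpha$. Apart from keeping track of this point (resolved by identifying $\alpha$ with its contribution to $E^A_\alpha$ uniquely among summands), the proof is a direct computation via the nilpotent--semisimple splitting and requires no further machinery.
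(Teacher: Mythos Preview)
Your forward inclusion is correct and is essentially the paper's argument: both show that $(e^A - e^\alpha \Id)|_{E^A_\alpha}$ is nilpotent, the paper via the factorisation $x^n - y^n = (x-y)p_n(x,y)$ applied termwise to the exponential series, you via the slightly cleaner route $e^A|_{E^A_\alpha} = e^\alpha e^N$ with $e^N - \Id = NM$ for $M$ a polynomial in $N$. Your reverse inclusion, by decomposing along $V_\C = \bigoplus_\beta E^A_\beta$, is likewise the paper's dimension-count strategy.

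The gap is exactly where you place it, and your parenthetical resolution does not close it. From $v \in E^{e^A}_{e^\alpha}$ your argument only yields $v = \sum_{\beta:\, e^\beta = e^\alpha} v_\beta$, and nothing forces the summands with $\beta \neq \alpha$ to vanish. In fact the lemma is \emph{false} as stated: take $V_\C = \C^2$ and $A$ diagonal with entries $0$ and $2\pi i$; then $e^A = \Id$, so $E^{e^A}_{e^0} = \C^2$ while $E^A_0 = \C \times \{0\}$. The paper's own proof shares this gap, since its direct-sum equality $\bigoplus_\alpha E^A_\alpha = \bigoplus_\alpha E^{e^A}_{e^\alpha}$ tacitly assumes $\alpha \mapsto e^\alpha$ is injective on $\sigma(A)$. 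What is always true, and what your argument actually establishes, is the grouped identity $E^{e^A}_\gamma = \bigoplus_{\beta:\, e^\beta = \gamma} E^A_\beta$; this weaker form is what the paper's later applications of the lemma really use.
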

\begin{proof}
	Fix $\alpha\in\C$.
	Since $AE^A_\alpha\subset E^A_\alpha$, then $e^AE^A_\alpha\subset E^A_\alpha$.
	If we show that $(e^A-e^\alpha\Id)|_{E^A_\alpha}$ is nilpotent, then we have ${E^A_\alpha}\subset E^{e^A}_{e^\alpha}$.
	Since $V_\C=\bigoplus_\alpha E^A_\alpha = \bigoplus_\alpha{E^{e^A}_{e^\alpha}}$, we can conclude $E^A_\alpha= E^{e^A}_{e^\alpha}$.
	
	So, without loss of generality, we assume $V_\C=E^A_\alpha$.
	For all $n\ge1$, define the polynomial $p_n(x,y) = x^{n-1}+x^{n-2}y+\dots+xy^{n-2}+y^{n-1}$, so that
	\(
	x^n-y^n = (x-y) p_n(x,y) 
	\).
	Let $m\in\N$ be such that $(A-\alpha\Id)^m=0$. 
	Then one can easily show
	\begin{align*}
	(e^A-e^\alpha\Id)^m
	&= \sum_{k_1,\dots,k_m=1}^\infty (A-\alpha\Id)^m \frac{p_{k_1}(A,\alpha\Id)}{k_1!}\cdots \frac{p_{k_m}(A,\alpha\Id)}{k_m!}
	= 0 .\qedhere
	\end{align*}
\end{proof}


If $\frk g$ is a real Lie algebra, we define Lie brackets on its complexification $\frk g_\C$ by
\[
[X_1+JY_1 , X_2+JY_2 ]_{\frk g_\C}
:= [X_1,X_2]-[Y_1,Y_2] +J \left(  [X_1,Y_2]+[Y_1,X_2] \right)  .
\]
With these Lie brackets, $\frk g_\C$ is a complex Lie algebra. 
We denote by $\Aut_\C(\frk g_\C)$ and $\Der_\C(\frk g_\C)$ the spaces of complex automorphisms and derivations of $\frk g_\C$, respectively.
The complexification of a Lie algebra automorphism of $\frk g$ is a Lie algebra automorphism of $\frk g_\C$.
Similarly, the complexification of a derivation is a derivation.
In other words, up to canonical identifications, $\Aut(\frk g)\subset\Aut_\C(\frk g_\C)$ and $\Der(\frk g)\subset\Der_\C(\frk g_\C)$.

\begin{lemma}\label{lemBurba}
	If $\phi\in\Aut_\C(\frk g_\C)$ and $\alpha,\beta\in\C$, then 
	\begin{equation*}
	[E^{\phi}_\alpha,E^{\phi}_\beta] \subset E^{\phi}_{\alpha\beta} , \qquad   \forall  \alpha,\beta\in\C .
	\end{equation*}
\end{lemma}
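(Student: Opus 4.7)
The plan is to exploit the fact that $\phi$ being a Lie algebra automorphism makes the bracket equivariant: $\phi[X,Y]=[\phi X,\phi Y]$. The key identity to leverage is
\[
(\phi-\alpha\beta\Id)[X,Y]=[\phi X,\phi Y]-\alpha\beta[X,Y],
\]
which should be viewed as the statement that, thinking of the bracket as a linear map $T:\frk g_\C\otimes\frk g_\C\to\frk g_\C$ with $T(X\otimes Y)=[X,Y]$, we have the intertwining
\[
(\phi-\alpha\beta\Id)\circ T = T\circ\bigl(\phi\otimes\phi-\alpha\beta\,\Id\otimes\Id\bigr).
\]
Iterating, it suffices to show that $\phi\otimes\phi-\alpha\beta\,\Id\otimes\Id$ is nilpotent on $E^\phi_\alpha\otimes E^\phi_\beta$; then for any $X\in E^\phi_\alpha$, $Y\in E^\phi_\beta$, a sufficiently high power of $\phi-\alpha\beta\Id$ annihilates $[X,Y]$, placing it in $E^\phi_{\alpha\beta}$.

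To verify the nilpotency, write $\phi|_{E^\phi_\alpha}=\alpha\Id+N_\alpha$ and $\phi|_{E^\phi_\beta}=\beta\Id+N_\beta$, where $N_\alpha,N_\beta$ are the nilpotent parts. On the tensor product $E^\phi_\alpha\otimes E^\phi_\beta$ one then computes
\[
\phi\otimes\phi-\alpha\beta\,\Id\otimes\Id
= \alpha\,\Id\otimes N_\beta + \beta\,N_\alpha\otimes\Id + N_\alpha\otimes N_\beta,
\]
which is a sum of three pairwise commuting nilpotent operators, hence nilpotent. Applying the intertwining identity inductively,
\[
(\phi-\alpha\beta\Id)^{N}[X,Y] = T\bigl((\phi\otimes\phi-\alpha\beta\,\Id\otimes\Id)^{N}(X\otimes Y)\bigr),
\]
which vanishes for $N$ large enough, proving $[X,Y]\in E^\phi_{\alpha\beta}$.

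There is no serious obstacle: once one spots that the bracket is an intertwiner between $\phi\otimes\phi$ on $\frk g_\C\otimes\frk g_\C$ and $\phi$ on $\frk g_\C$, the statement is a general fact about generalized eigenspaces of $\phi\otimes\phi$ versus those of $\phi$. One could alternatively run a direct induction on the order of nilpotency, expanding $[\phi^n X,\phi^n Y]$ via the binomial-type formula for $\phi=\alpha\Id+N_\alpha$ and $\phi=\beta\Id+N_\beta$, but the tensor-product viewpoint makes the commutativity of the nilpotent summands transparent and avoids combinatorial bookkeeping.
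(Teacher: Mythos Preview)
Your proof is correct. The paper takes the direct combinatorial route you allude to at the end: it proves by induction on $n$ the explicit identity
\[
(\phi-\alpha\beta\Id)^n[v,w]
= \sum_{\substack{j+k=n\\j,k\ge0 }} \binom{n}{j} \bigl[\alpha^k(\phi-\alpha\Id)^jv ,\, \phi^j (\phi-\beta\Id)^k w\bigr],
\]
from which the inclusion follows immediately once $n$ exceeds the sum of the nilpotency orders. Your tensor-product argument is a more conceptual repackaging of the same computation: the intertwining $(\phi-\alpha\beta\Id)\circ T = T\circ(\phi\otimes\phi-\alpha\beta\,\Id\otimes\Id)$ replaces the inductive step, and the decomposition $\phi\otimes\phi-\alpha\beta\,\Id\otimes\Id = \alpha\,\Id\otimes N_\beta + \beta\,N_\alpha\otimes\Id + N_\alpha\otimes N_\beta$ into commuting nilpotents replaces the binomial bookkeeping. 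The paper's approach has the minor advantage of giving an explicit formula (used nowhere else), while yours makes the structural reason transparent and generalizes cleanly to any $\phi$-equivariant multilinear map.
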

\begin{proof}
	The proof is elementary after one proves by induction on $n\in \N$ that
	\[
	(\phi-\alpha\beta\Id)^n[v,w] 
	= \sum_{\substack{j+k=n\\j,k\ge0 }} \binom{n}{j} [\alpha^k(\phi-\alpha\Id)^jv , \phi^j (\phi-\beta\Id)^k w] 
	\]
	holds for all $v,w\in\frk g_\C$, all $\alpha,\beta\in\C$ and all $n\in\N$.
	See also \cite[p.6, Prop.12]{MR0453824}. 
\end{proof}

\begin{lemma}\label{lem07251242}
	If $A\in\Der_\C(\frk g_{\C})$ and $\alpha,\beta\in\C$, then
	\begin{equation*}
	[E^A _\alpha,E^A_\beta]\subset E^A_{\alpha+\beta} .
	\end{equation*}
\end{lemma}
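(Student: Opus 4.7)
The plan is to mimic the structure of the proof of Lemma~\ref{lemBurba}, only replacing the multiplicative Leibniz-type identity (valid for the automorphism $\phi$) by the additive one that holds for a derivation. Writing $D_\gamma := A - \gamma\Id$, the key point is that from the derivation property $A[v,w] = [Av,w] + [v,Aw]$ one immediately gets
\[
D_{\alpha+\beta}[v,w] = [D_\alpha v,w] + [v,D_\beta w],
\]
because the extra terms $-\alpha[v,w]-\beta[v,w]$ distribute across the bracket exactly as needed.

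Next I would prove by induction on $n\in\N$ the binomial identity
\[
D_{\alpha+\beta}^n[v,w] = \sum_{\substack{j+k=n\\ j,k\ge 0}} \binom{n}{j}[D_\alpha^j v, D_\beta^k w]
\qquad \forall v,w\in\frk g_\C.
\]
The base case $n=1$ is the formula above. For the inductive step, one applies $D_{\alpha+\beta}$ to both sides and uses the base case termwise; this reshuffles the indices exactly as in Pascal's rule, giving the formula for $n+1$. The only thing to check carefully here is that $D_\alpha$ and $D_\beta$ commute with $A$ acting in each slot, which is obvious since they differ from $A$ by a scalar.

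Finally, suppose $v \in E^A_\alpha$ and $w \in E^A_\beta$, so there exist $N,M \in \N$ with $D_\alpha^N v = 0$ and $D_\beta^M w = 0$. Taking $n \ge N+M-1$, every term in the sum above has either $j \ge N$ or $k \ge M$, hence vanishes. Therefore $D_{\alpha+\beta}^n [v,w] = 0$, which by definition means $[v,w]\in E^A_{\alpha+\beta}$, proving the inclusion.

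There is no real obstacle here; the only slightly delicate point is the bookkeeping in the induction, which is a direct transcription of the classical argument for iterated derivations. The analogy with Lemma~\ref{lemBurba} is that $\phi$ acted multiplicatively on brackets and produced $\alpha\beta$, whereas the derivation $A$ acts additively and produces $\alpha+\beta$; apart from this change of group law in the exponents, the combinatorics are identical.
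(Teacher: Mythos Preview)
Your proof is correct and follows essentially the same inductive approach as the paper. In fact, your binomial identity is the correct one; the formula stated in the paper (with a constant coefficient $2$ in front of the middle sum rather than $\binom{n}{j}$) is a typo that fails already for $n=3$, though of course either version would suffice for the conclusion since one only needs that each term involves $D_\alpha^j$ or $D_\beta^{n-j}$ with a large exponent.
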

\begin{proof}
	The proof is elementary after one proves by induction on $n\in \N$ that
	\begin{equation*}
	\begin{split}
	(A-(\alpha+\beta)\Id)^n&[v,w] 
	= [(A-\alpha\Id)^nv,w]  \\
	&	+ 2\sum_{j=1}^{n-1} [(A-\alpha\Id)^j v,(A-\beta\Id)^{n-j} w] + [v,(A-\beta\Id)^n w] 
	\end{split}
	\end{equation*}
	holds for all $v,w\in\frk g_\C$, all $\alpha,\beta\in\C$ and all $n\in\N$.
\end{proof}
%

If $V_\C$ is a complex vector space, $L:V_\C\to V_\C$ is a linear map and $f:\sigma(L)\to\C$ is a function, we denote by $L_f$ the linear function such that $L_fv_\alpha=f(\alpha)v_\alpha$ for every $v_\alpha\in E^L_\alpha$.
One easily checks that $[L,L_f]=0$ and that, if $g:\sigma(L)\to\C$ is another map, then $[L_f,L_g]=0$.
Moreover, if $V_\C$ is the complexification of a real vector space $V$, $L(V)=V$ and $f(\bar\alpha)=\overline{f(\alpha)}$, then $L_f(V)=V$ again.

We will need the following two statements, whose easy proofs are based on Lemmas~\ref{lemBurba} and~\ref{lem07251242}.

\begin{lemma}\label{lem10271029}
	If $\phi\in\Aut_\C(\frk g_\C)$ and $f:\sigma(\phi)\to\C$ is a multiplicative function, i.e., $f(\alpha\beta)=f(\alpha)f(\beta)$ for all $\alpha,\beta\in\sigma(\phi)$, then $\phi_f\in\Aut_\C(\frk g_\C)$.
	
	In particular, if $\phi\in\Aut(\frk g)$ and $f:\sigma(\phi)\to\C$ is a multiplicative function with $f(\bar\alpha)=\overline{f(\alpha)}$, then $\phi_f\in\Aut(\frk g)$.
\end{lemma}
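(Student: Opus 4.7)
My plan is to verify the three conditions for $\phi_f$ to belong to $\Aut_\C(\frk g_\C)$ — $\C$-linearity, bijectivity, and bracket preservation — using the generalized eigenspace decomposition of $\phi$, and then to deduce the real case from the preservation-of-real-structure fact noted in the paragraph immediately preceding the lemma.

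The central computation is bracket preservation. I would decompose $\frk g_\C = \bigoplus_{\alpha\in\sigma(\phi)} E^\phi_\alpha$ and, by bilinearity of $[\cdot,\cdot]$, reduce to checking $\phi_f[v,w]=[\phi_f v,\phi_f w]$ for $v\in E^\phi_\alpha$ and $w\in E^\phi_\beta$. Lemma~\ref{lemBurba} yields $[v,w]\in E^\phi_{\alpha\beta}$. When $\alpha\beta\in\sigma(\phi)$, multiplicativity of $f$ gives
\[
\phi_f[v,w] \;=\; f(\alpha\beta)\,[v,w] \;=\; f(\alpha)f(\beta)\,[v,w] \;=\; [f(\alpha)v,\,f(\beta)w] \;=\; [\phi_f v,\,\phi_f w];
\]
when $\alpha\beta\notin\sigma(\phi)$, the space $E^\phi_{\alpha\beta}=\{0\}$ forces $[v,w]=0$ and both sides vanish trivially. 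Linearity of $\phi_f$ is immediate from the definition. Since $\phi\in\Aut_\C(\frk g_\C)$ implies $0\notin\sigma(\phi)$ and $f$ is nowhere zero (as is automatic in the intended applications, where $f$ is a character of $\C^*$), the map $\phi_f$ acts as a nonzero scalar on each generalized eigenspace and is therefore a bijection.

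For the ``in particular'' clause, the hypothesis $f(\bar\alpha)=\overline{f(\alpha)}$ together with the remark just above this lemma shows that $\phi_f$ preserves the real form $\frk g\subset\frk g_\C$; combined with the complex case this gives $\phi_f\in\Aut(\frk g)$. The only genuinely nontrivial step is the bracket computation, which is little more than a transparent application of Lemma~\ref{lemBurba}; the rest is bookkeeping, and the mild subtlety lies in making sure to treat the case $\alpha\beta\notin\sigma(\phi)$ separately and to record the implicit non-vanishing of $f$ needed for invertibility.
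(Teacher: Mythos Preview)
Your proof is correct and follows exactly the approach the paper intends: the authors state only that the proof is an easy consequence of Lemma~\ref{lemBurba}, and your bracket computation via the generalized eigenspace decomposition is precisely that. Your remark that invertibility requires $f$ to be nowhere zero on $\sigma(\phi)$ is a fair observation about an implicit hypothesis in the lemma's statement; it holds in every application made in the paper.
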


\begin{lemma}
	If $A\in\Der_\C(\frk g_\C)$ and $f:\sigma(A)\to\C$ is an additive function, i.e., $f(\alpha+\beta)=f(\alpha)+f(\beta)$ for all $\alpha,\beta\in\sigma(\phi)$, then $A_f\in\Der_\C(\frk g_\C)$.

	In particular, if $A\in\Der(\frk g)$ and $f:\sigma(A)\to\C$ is an additive function with $f(\bar\alpha)=\overline{f(\alpha)}$, then $A_f\in\Der(\frk g)$.
\end{lemma}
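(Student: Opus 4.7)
The proof will closely mirror that of the preceding Lemma~\ref{lem10271029}, with Lemma~\ref{lemBurba} replaced by Lemma~\ref{lem07251242}. The plan is to verify the Leibniz rule $A_f[v,w] = [A_fv,w]+[v,A_fw]$ on a basis adapted to the generalized eigenspace decomposition $\frk g_\C = \bigoplus_{\alpha\in\sigma(A)} E^A_\alpha$, then extend by bilinearity.

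First, I would fix $\alpha,\beta\in\sigma(A)$ and take arbitrary $v\in E^A_\alpha$, $w\in E^A_\beta$. By Lemma~\ref{lem07251242} we have $[v,w]\in E^A_{\alpha+\beta}$. There are two cases: if $\alpha+\beta\in\sigma(A)$, then by definition of $A_f$ and the additivity of $f$,
\[
A_f[v,w] = f(\alpha+\beta)[v,w] = (f(\alpha)+f(\beta))[v,w],
\]
while if $\alpha+\beta\notin\sigma(A)$ then $E^A_{\alpha+\beta}=\{0\}$, so $[v,w]=0$ and $A_f[v,w]=0$. On the other hand, since $v\in E^A_\alpha$ and $w\in E^A_\beta$, we compute directly
\[
[A_fv,w]+[v,A_fw] = f(\alpha)[v,w] + f(\beta)[v,w] = (f(\alpha)+f(\beta))[v,w],
\]
which agrees with $A_f[v,w]$ in both cases (trivially so in the second). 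Extending by $\C$-bilinearity over the decomposition $\frk g_\C=\bigoplus E^A_\alpha$, the Leibniz identity holds for all $v,w\in\frk g_\C$, so $A_f\in\Der_\C(\frk g_\C)$.

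For the ``in particular'' statement, I would invoke the observation made just before Lemma~\ref{lem10271029}: if $V_\C$ is the complexification of the real vector space $V$, if $L$ preserves $V$, and if $f(\bar\alpha)=\overline{f(\alpha)}$, then $L_f$ also preserves $V$. Applied to $L=A$ and $V=\frk g$, this gives $A_f(\frk g)\subset\frk g$. Since $A_f$ is already a derivation of $\frk g_\C$, its restriction to $\frk g$ is a real derivation, i.e., $A_f\in\Der(\frk g)$.

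There is no serious obstacle: the only mild subtlety is the bookkeeping in the case $\alpha+\beta\notin\sigma(A)$, where $f(\alpha+\beta)$ is formally undefined, but Lemma~\ref{lem07251242} forces the bracket to vanish and the identity becomes $0=0$. This is precisely the mechanism that makes the additivity hypothesis sufficient even though $\sigma(A)$ need not be closed under addition.
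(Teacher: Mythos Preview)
Your proof is correct and matches the paper's approach: the paper does not spell out a proof but simply remarks that the argument is an easy consequence of Lemma~\ref{lem07251242}, which is exactly the key ingredient you use. Your careful handling of the case $\alpha+\beta\notin\sigma(A)$ and the invocation of the ``$f(\bar\alpha)=\overline{f(\alpha)}$'' observation for the real statement are precisely what the paper leaves implicit.
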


The following result is a straightforward consequence.

\begin{corollary}\label{cor07251444}
	If $A\in\Der(\frk g)$, then the linear maps $A_R, A_I, A_N:\frk g_{\C}\to\frk g_{\C}$ defined by
	\begin{align*}
	A_R(v) &= \Re(\alpha)v \qquad\text{ for }v\in E_\alpha,\ \alpha\in\C \\
	A_I(v) &= i\Im(\alpha)v \qquad\text{ for }v\in E_\alpha,\ \alpha\in\C \\
	A_N &= A-A_I-A_R .
	\end{align*}
	all belong to $\Der(\frk g)$ and they commute with one another and with $A$.
\end{corollary}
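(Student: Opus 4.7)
The strategy is to recognize $A_R$ and $A_I$ as instances of the construction $A_f$ from the preceding discussion, so that the previous lemma applies directly, and then to obtain $A_N$ as a linear combination of derivations.

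First, I would define $f,g\colon\sigma(A)\to\C$ by $f(\alpha)=\Re(\alpha)$ and $g(\alpha)=i\Im(\alpha)$, so that by construction $A_R=A_f$ and $A_I=A_g$. Both functions are additive on $\C$, and a quick check shows $f(\bar\alpha)=\Re(\bar\alpha)=\Re(\alpha)=\overline{f(\alpha)}$ and $g(\bar\alpha)=i\Im(\bar\alpha)=-i\Im(\alpha)=\overline{g(\alpha)}$. Hence the "in particular" part of the unnumbered lemma on additive functions (immediately preceding the corollary) applies to both $f$ and $g$, yielding $A_R,A_I\in\Der(\frk g)$.

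Next, since $A\in\Der(\frk g)\subset\Der_\C(\frk g_\C)$ preserves the real subspace $\frk g\subset\frk g_\C$ and so do $A_R$ and $A_I$, the difference $A_N=A-A_I-A_R$ is a $\C$-linear map that restricts to a $\R$-linear map on $\frk g$. Since the Leibniz rule is linear in the derivation, any real linear combination of derivations is a derivation, and therefore $A_N\in\Der(\frk g)$ as well.

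For the commutativity claims, the key observation recorded in the paragraph introducing the notation $L_f$ is that $[L,L_f]=0$ and $[L_f,L_g]=0$ for any two functions $f,g$ on $\sigma(L)$. Applying this with $L=A$ immediately gives $[A,A_R]=[A,A_I]=[A_R,A_I]=0$. The remaining brackets involving $A_N$ then follow by bilinearity of the commutator: for instance,
\[
[A,A_N]=[A,A]-[A,A_R]-[A,A_I]=0,
\]
and analogously $[A_R,A_N]=[A_I,A_N]=0$. No step is genuinely difficult here; the only point one must be careful about is the verification of the reality condition $f(\bar\alpha)=\overline{f(\alpha)}$ for both $f$ and $g$, which ensures the derivations land in $\Der(\frk g)$ rather than merely in $\Der_\C(\frk g_\C)$.
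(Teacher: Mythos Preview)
Your proof is correct and follows exactly the route the paper intends: the corollary is stated as a ``straightforward consequence'' of the preceding lemma on additive functions, and you have spelled out precisely that derivation, together with the commutativity facts already recorded in the paragraph introducing the notation $L_f$.
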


If $A\in\Der(\frk g)$  and $\lambda>0$, we denote by $\lambda^A$ the automorphism $e^{\log(\lambda)A}\in\Aut(\frk g)$.
Notice that $\lambda\mapsto\lambda^A$ is a group homomorphism $\R_{>0}\to\Aut(\frk g)$.
All one-parameter subgroups of $\Aut(\frk g)$ are of this form.

If $\frk g$ is the Lie algebra of the Lie group $G$, and if $A\in\Der(\frk g)$  and $\lambda>0$ are such that $\lambda^A$ induces a Lie group automorphism on $G$, then we will denote this Lie group automorphism again by $\lambda^A$.
This abuse of notation is safe when $G$ is connected simply connected, because every Lie algebra automorphism induces a unique Lie group automorphism of $G$.

\subsection{Gradings}\label{sec11051528}
In this paper we use the following terminology.
 A  {\em real grading} of a Lie algebra $\frk g$ is a family $(V_t)_{t\in \R}$ of linear subspaces of $\g$, where all but finitely many of the $V_t$'s are $\{0\}$, such that $\g$ is their direct sum
 \[
 \g=  \bigoplus_{t\in \R} V_t 
 \]
  and where 
  \[
  [V_t, V_u]\subset V_{t+u}, \qquad  \text{ for all } t,u >0.
  \]
  If there exists a real grading $(V_t)_{t\in \R}$ with $V_t=\{0\}$ for all $t\leq0$, then 
we say that a Lie algebra is \emph{positively graduable} and call
$(V_t)_{t\in (0,+\infty)}$ a  \emph{positive grading}  of $\g$. 
Every  positively graduable Lie algebra  is nilpotent.	

Both automorphisms and derivations of a Lie algebra define specific gradings, as we show in the following two propositions.

\begin{proposition}\label{prop08220937}
	Let $\phi\in\Aut(\frk g)$.
	For all $\lambda\in(0,+\infty)\setminus\{1\}$ and $t\in\R$, define 
	\[
	V_t 
	:= V_t(\lambda, \phi):= \frk g\cap \bigoplus_{|\alpha|=\lambda^t} E^\phi_\alpha .
	\]
	Then $\{V_t\}_{t\in\R}$ is a real grading of $\frk g$.
	Moreover,
	\[
	|\det(\phi)| 
	= \lambda^{\sum_{t\in\R} t\cdot\dim(V_t)}.
	\]
\end{proposition}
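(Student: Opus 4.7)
The plan is to reduce everything to statements about the complexification $\frk g_\C$, where the generalized eigenspace decomposition of $\phi$ is available, and then descend back to the real picture using the symmetry imposed by $\phi$ being real.

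First, I would record the basic reality observation: since $\phi$ is $\R$-linear, its complexification commutes with complex conjugation $(\cdot)^*$ on $\frk g_\C$, so $(E^\phi_\alpha)^* = E^\phi_{\bar\alpha}$ for every $\alpha \in \sigma(\phi)$. For fixed $t$, the index set $\{\alpha \in \sigma(\phi) : |\alpha| = \lambda^t\}$ is closed under $\alpha \mapsto \bar\alpha$, so the complex subspace
\[
W_t := \bigoplus_{|\alpha| = \lambda^t} E^\phi_\alpha \subset \frk g_\C
\]
is stable under conjugation. Consequently $W_t = (V_t)_\C$ where $V_t = \frk g \cap W_t$, and in particular $\dim_\R V_t = \dim_\C W_t$. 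Since $\frk g_\C = \bigoplus_{\alpha \in \sigma(\phi)} E^\phi_\alpha = \bigoplus_{t \in \R} W_t$ and only finitely many $W_t$ are nonzero (because $\sigma(\phi)$ is finite), descending to real forms yields $\frk g = \bigoplus_{t \in \R} V_t$ with only finitely many summands nonzero.

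Next, for the grading condition, I would invoke Lemma~\ref{lemBurba}. Given $v \in V_t$ and $w \in V_u$, write $v = \sum_\alpha v_\alpha$ and $w = \sum_\beta w_\beta$ in $\frk g_\C$ with $v_\alpha \in E^\phi_\alpha$, $|\alpha|=\lambda^t$, and analogously for $w_\beta$. Then by Lemma~\ref{lemBurba}, $[v_\alpha, w_\beta] \in E^\phi_{\alpha\beta}$, and $|\alpha\beta| = \lambda^{t+u}$, so $[v,w] \in W_{t+u}$. On the other hand $[v,w] \in \frk g$ because $v,w$ are real, hence $[v,w] \in \frk g \cap W_{t+u} = V_{t+u}$. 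This gives $[V_t,V_u] \subset V_{t+u}$ even for all $t,u \in \R$, which is stronger than the requirement for a real grading.

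Finally, for the determinant identity, I would use that the real determinant equals the complex determinant of the complexification,
\[
|\det(\phi)| = \Bigl|\prod_{\alpha \in \sigma(\phi)} \alpha^{\dim_\C E^\phi_\alpha}\Bigr| = \prod_{\alpha \in \sigma(\phi)} |\alpha|^{\dim_\C E^\phi_\alpha}.
\]
Grouping eigenvalues of equal modulus and applying $\dim_\C W_t = \dim_\R V_t$ gives
\[
|\det(\phi)| = \prod_{t \in \R} \lambda^{t \cdot \dim_\C W_t} = \lambda^{\sum_{t} t \cdot \dim V_t}.
\]
There is no serious obstacle in this proof; the only thing demanding care is checking that conjugation-stability of $W_t$ really forces $\dim_\R(\frk g \cap W_t) = \dim_\C W_t$, which is a standard fact about real forms of complex subspaces invariant under conjugation.
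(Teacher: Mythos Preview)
Your proof is correct and follows essentially the same route as the paper: complexify, use that conjugation permutes the $E^\phi_\alpha$ within each modulus level so the spaces $W_t$ descend to $\frk g$, apply Lemma~\ref{lemBurba} for the bracket relation, and compute $|\det\phi|$ via the complex eigenvalue product grouped by modulus. Your version is in fact slightly more streamlined than the paper's, which introduces the intermediate spaces $U^\phi_\alpha=(E^\phi_\alpha\oplus E^\phi_{\bar\alpha})\cap\frk g$ and a bookkeeping factor $\varepsilon_\alpha$ in the determinant step, whereas you bypass both by working directly with the conjugation-stable $W_t$ and the identity $\dim_\R V_t=\dim_\C W_t$.
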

\begin{proof}
	For $\alpha\in\C$, define $U^\phi_\alpha := ( E^{\phi}_\alpha\oplus E^{\phi}_{\bar\alpha} )\cap\frk g$.
	We claim that
	\begin{equation}\label{eq12061733}
	\frk g = \bigoplus_{\alpha\in\sigma(\phi)} U^\phi_\alpha ,
	\end{equation}
	where the  sum is direct up to the identification $U^\phi_\alpha=U^\phi_{\bar \alpha}$.
	Indeed, let $v=\sum_{\alpha}v_\alpha\in\frk g$ with $v_\alpha\in E^\phi_\alpha$ for all $\alpha$.
	Notice that if $w\in E^\phi_\alpha$, then $w^*\in E^\phi_{\bar \alpha}$, because $(\phi-\alpha\Id)^nw = ((\phi-\bar\alpha\Id)^n w^*)^*$ for all $n\in\N$.
	Hence, since $v= v^*$, then $v_\alpha+v_{\bar \alpha} =  v_\alpha^* +  v_{\bar \alpha}^*$, where $ v_\alpha^*\in E^\phi_{\bar \alpha}$ and $ v_{\bar \alpha}^*\in E^\phi_\alpha$.
	Therefore, $ v_\alpha^* = v_{\bar \alpha}$, for all $\alpha$, and thus $v=\frac12 \sum_{\alpha}(v_\alpha+ v_\alpha^*)$, where $v_\alpha+ v_\alpha^*\in U_\alpha$.
		So, we have $\frk g= \sum_{\alpha\in\sigma(\phi)} U^\phi_\alpha$.
	Since $U^\phi_\alpha\cap U^\phi_\beta=\{0\}$ if $\alpha\notin\{\beta,\bar\beta\}$, the sum is direct.
	This proves claim~\eqref{eq12061733}.
	
	Since $\phi$ is injective, 
	then $U^\phi_0=\{0\}$.
	Therefore, by~\eqref{eq12061733}, we have $\frk g = \bigoplus_{t\in\R} V_t$.
	
	Using Lemma~\ref{lemBurba}, we have
	\begin{equation}\label{eq12061734}
	[U^\phi_\alpha,U^\phi_\beta] \subset U^\phi_{\alpha\beta}\oplus U^\phi_{\bar\alpha\beta} , \qquad \forall \alpha, \beta\in\C.
	\end{equation}
	
	If $X\in U^\phi_\alpha$ and $Y\in U^\phi_\beta$ with $|\alpha|=\lambda^t$ and $|\beta|=\lambda^s$, then $[X,Y]\in U^\phi_{\alpha\beta}\oplus U^\phi_{\bar\alpha\beta} \subset V_{t+s}$, because of~\eqref{eq12061734} and  $|\alpha\beta| = |\bar\alpha\beta| = \lambda^{s+t}$.
	Therefore, $[V_s,V_t] \subset V_{s+t}$ and $\{V_t\}_{t\in\R}$ is a real grading of $\frk g$.
	Finally, if we set $\varepsilon_\alpha=1$ if $\alpha\in\R$ and $\varepsilon_\alpha=1/2$ if $\alpha\in\C\setminus\R$,
	\[
	|\det(\phi)|
	= \left| \prod_{\alpha\in\sigma(\phi)} \alpha^{\dim_{\C}(E_\alpha)} \right| 
	= \prod_{\alpha\in\sigma(\phi)} |\alpha|^{\varepsilon_\alpha\dim_\R (U_\alpha)} 
	= \prod_{t\in\R} \lambda^{t \cdot\dim(V_t)} . \qedhere
	\]
\end{proof}

In a similar way, using Lemma~\ref{lem07251242} instead of Lemma~\ref{lemBurba}, one can show the following Proposition.
\begin{proposition}\label{prop12061739}
	Let $A\in\Der(\frk g)$ and define, for all $t\in\R$,
	\[
	V_t(A) := \frk g \cap \bigoplus_{s\in\R} E^{A}_{t+is} .
	\]
	Then $\{V_t(A)\}_{t\in\R}$ is a real grading of $\frk g$.
\end{proposition}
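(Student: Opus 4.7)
The plan is to mirror the proof of Proposition~\ref{prop08220937} step by step, substituting the multiplicative bracket behavior of eigenvalues (Lemma~\ref{lemBurba}) with the additive one for derivations (Lemma~\ref{lem07251242}), and with real parts of eigenvalues playing the role that logarithms of moduli played before.

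First, for each $\alpha\in\sigma(A)$ I would set $U^A_\alpha := (E^A_\alpha\oplus E^A_{\bar\alpha})\cap \frk g$ and establish the real decomposition
\[
\frk g = \bigoplus_{\alpha\in\sigma(A)} U^A_\alpha ,
\]
where the sum is taken up to the identification $U^A_\alpha=U^A_{\bar\alpha}$. This is done exactly as in Proposition~\ref{prop08220937}: since $A$ is real, the identity $(A-\alpha\Id)^n w = ((A-\bar\alpha\Id)^n w^*)^*$ shows that complex conjugation swaps $E^A_\alpha$ with $E^A_{\bar\alpha}$; decomposing $v\in\frk g\subset\frk g_\C$ as $\sum_\alpha v_\alpha$ and using $v=v^*$ forces $v_{\bar\alpha}=v_\alpha^*$, so $v=\tfrac12\sum_\alpha(v_\alpha+v_\alpha^*)$ with each summand in the corresponding $U^A_\alpha$.

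Next, grouping eigenvalues by their real part, $V_t(A)=\bigoplus_{\Re(\alpha)=t} U^A_\alpha$, so the decomposition above gives
\[
\frk g = \bigoplus_{t\in\R} V_t(A),
\]
and only finitely many summands are nonzero since $\sigma(A)$ is finite. It remains to verify the grading condition $[V_t(A),V_s(A)]\subset V_{t+s}(A)$. For this I would apply Lemma~\ref{lem07251242} to obtain, for any $\alpha,\beta\in\sigma(A)$,
\[
[U^A_\alpha, U^A_\beta]\subset U^A_{\alpha+\beta}+U^A_{\alpha+\bar\beta},
\]
by expanding the bracket over the four pieces $E^A_\alpha,E^A_{\bar\alpha},E^A_\beta,E^A_{\bar\beta}$ and noting that $E^A_{\alpha+\beta}$ and $E^A_{\bar\alpha+\bar\beta}$ combine into $U^A_{\alpha+\beta}$, while $E^A_{\alpha+\bar\beta}$ and $E^A_{\bar\alpha+\beta}$ combine into $U^A_{\alpha+\bar\beta}$. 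If $\Re(\alpha)=t$ and $\Re(\beta)=s$, then both $\Re(\alpha+\beta)$ and $\Re(\alpha+\bar\beta)$ equal $t+s$, so both terms sit inside $V_{t+s}(A)$, which finishes the proof.

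There is no real obstacle: the only point requiring care is the bookkeeping between the complex eigenspace decomposition of $\frk g_\C$ and the real subspace $\frk g$ via the conjugation involution, and this is already handled in the same manner in Proposition~\ref{prop08220937}. No analogue of the determinant identity is needed here.
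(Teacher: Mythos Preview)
Your proposal is correct and follows exactly the approach the paper intends: the paper does not spell out a proof but simply says ``In a similar way, using Lemma~\ref{lem07251242} instead of Lemma~\ref{lemBurba}, one can show the following Proposition,'' and that is precisely what you do. Your bookkeeping with the $U^A_\alpha$'s and the observation that $\Re(\alpha+\beta)=\Re(\alpha+\bar\beta)=t+s$ is the right additive analogue of the multiplicative step in Proposition~\ref{prop08220937}.
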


We finish this section by recalling a result due to Siebert \cite{MR812604} that we will use very often.
We provide the short proof for completeness.

\begin{proposition}[Siebert]\label{prop12131456}
	Let $G$ be a connected Lie group and $\delta\in\Aut(G)$ be a contractive automorphism, i.e., $\lim_{n\to\infty}\delta^nx=e_G$ uniformly on compact sets.
	Then $V_t=\{0\}$ for all $t\le0$ and $\lambda\in(0,1)$, where $V_t(\lambda, \delta_*)$ is as in Proposition~\ref{prop08220937},
	and
	$G$ is nilpotent and simply connected.
\end{proposition}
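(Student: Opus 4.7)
The plan is to transfer the group-level contraction to a linear contraction of $\delta_*$ on $\frk g$, read off the grading and nilpotency statements from the resulting spectral information, and then lift the dynamics to the universal cover to obtain simple connectedness.

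First, I would exploit the naturality $\delta\circ\exp=\exp\circ\delta_*$, which gives $\exp(t\delta_*^n X)=\delta^n(\exp(tX))$ for every $X\in\frk g$ and every $t\in\R$. The uniform convergence $\delta^n\to e_G$ on compact sets then forces the one-parameter subgroup $t\mapsto\exp(t\delta_*^n X)$ to converge to the trivial path uniformly on each compact interval. Choosing a ball $V\subset\frk g$ around $0$ on which $\exp$ is a diffeomorphism and lifting the path $t\mapsto\exp(t\delta_*^nX)$ for large $n$, one concludes $\delta_*^n X\to 0$; hence every eigenvalue $\alpha\in\sigma(\delta_*)$ satisfies $|\alpha|<1$. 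For $t\le 0$ and $\lambda\in(0,1)$ we then have $\lambda^t\ge 1>|\alpha|$ for all $\alpha\in\sigma(\delta_*)$, so the sum defining $V_t(\lambda,\delta_*)$ is empty and that layer is trivial.

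Next, I would apply Proposition~\ref{prop08220937} to obtain the real grading $\frk g=\bigoplus_{t\in\R}V_t(\lambda,\delta_*)$, which by the previous step is supported in $(0,+\infty)$. Since only finitely many layers are nonzero, there exist $0<t_{\min}\le t_{\max}$ with $V_t=\{0\}$ outside $[t_{\min},t_{\max}]$. Iterating $[V_s,V_t]\subset V_{s+t}$ shows that any $k$-fold iterated bracket lies in a layer $V_s$ with $s\ge k\, t_{\min}$, which vanishes as soon as $k\, t_{\min}>t_{\max}$. Hence $\frk g$, and therefore $G$, is nilpotent.

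For the last statement, I would lift $\delta$ to the unique automorphism $\tilde\delta$ of the universal cover $\tilde G$. Since $\tilde G$ is a connected simply connected nilpotent Lie group, $\exp:\frk g\to\tilde G$ is a global diffeomorphism, and the identity $\tilde\delta^n(\exp X)=\exp(\delta_*^n X)$ together with the first step shows that $\tilde\delta$ is contractive on all of $\tilde G$. The kernel $\Gamma=\ker(\tilde G\to G)$ is a discrete, $\tilde\delta$-invariant subgroup, so for every $\gamma\in\Gamma$ the sequence $\tilde\delta^n\gamma$ stays in $\Gamma$ while converging to $e_{\tilde G}$; by discreteness it must equal $e_{\tilde G}$ for large $n$, and the injectivity of $\tilde\delta$ then forces $\gamma=e_{\tilde G}$. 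Hence $\Gamma$ is trivial and $G$ is simply connected. I expect the main obstacle to be the rigorous extraction of $\delta_*^n\to 0$ from the uniform-on-compacts convergence on $G$ in the first step, since that is the only place where the topological hypothesis has to be turned into linear-algebraic data; everything afterwards is bookkeeping on the grading or a standard discrete-kernel argument.
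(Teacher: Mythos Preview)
Your argument is correct. The first two steps (extracting $\sigma(\delta_*)\subset\{|\alpha|<1\}$ from the uniform contraction and then reading off nilpotency from the positive grading of Proposition~\ref{prop08220937}) coincide with what the paper does; the paper simply asserts $\sigma(\delta_*)\subset\{|\alpha|<1\}$ without details, so your path-lifting through a chart where $\exp$ is a diffeomorphism is a welcome justification of the step you correctly identified as the only delicate one.

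For simple connectedness you take a genuinely different route from the paper. The paper argues by contradiction: if $G$ were not simply connected then, being nilpotent, it would contain a nontrivial torus $K$, and the family $\{\delta^nK\}_{n\in\Z}$ would produce arbitrarily small nontrivial subgroups, violating the no-small-subgroups property of Lie groups. Your argument instead lifts $\delta$ to $\tilde\delta$ on the universal cover, uses that $\exp:\frk g\to\tilde G$ is a global diffeomorphism to see that $\tilde\delta$ is contractive, and then kills the discrete $\tilde\delta$-invariant kernel $\Gamma$ directly. Your approach is arguably more self-contained, since it avoids both the torus construction and the no-small-subgroups theorem; the paper's approach, on the other hand, stays entirely inside $G$ and does not need to invoke the global diffeomorphism property of $\exp$ on simply connected nilpotent groups.
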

\begin{proof}
	Since $\delta$ is contractive, we have $\sigma(\delta_*)\subset\{|\alpha|<1\}$.
	Therefore, $V_t=\{0\}$ for all $t\le0$ and $\lambda\in(0,1)$.
	Since $\{ V_t \}_{t>0}$ is a positive grading and $G$ is connected, $G$ is nilpotent.
	Furthermore, $G$ is simply connected because otherwise, being $G$ nilpotent, there would be a nontrivial compact subgroup\footnote{Indeed, if $G$ is not simply connected, then $G=\tilde G/H$, where $\tilde G$ is simply connected and $H$ is a discrete central subgroup of $\tilde G$. If $h\in H\setminus\{e\}$, then $h=\exp(x)$ for some $x\in\frk g$, and $\exp(\R x)/(\exp(\R x)\cap H)$ is a torus in $G$.}
	$K$, and therefore $\{\delta^nK\}_{n\in\Z}$ would contain arbitrarily small subgroups of $G$. 
	Since $G$ is a Lie group, this cannot happen and therefore $G$ is simply connected.
\end{proof}

\section{The topology of $A$-homogeneous distances}\label{sec11072237} 
%

In the definition of $A$-homogeneous distance we gave in the Introduction, we don't require the distance to be admissible, i.e., to induce the manifold topology.
However, we prove that $A$-homogeneous distances are in fact admissible, as we stated in Theorem~\ref{thm11171734}.

This section is devoted to the proof of Theorem~\ref{thm11171734}, which consists of several steps.
In this section, $G$ is a Lie group with Lie algebra $\frk g$ and neutral element $e_G$,
$d$ is a left-invariant distance on $G$,
$A\in\Der(\frk g)$ is a derivation, 
$\lambda\mapsto\delta_\lambda\in\Aut(G)$ is a multiplicative one-parameter group of automorphisms such that $(\delta_\lambda)_*=\lambda^A$ and each $\delta_\lambda$ is a metric dilation for $d$ of factor $\lambda$, for each $\lambda>0$.
The topology of the Lie structure on $G$ and the one induced by $d$ are denoted by $\tau_G$ and $\tau_d$, respectively.
We denote by $G^\circ$ the $\tau_G$-connected component of $G$ containing $e_G$.
If $Z\subset G$ is a set and $\tau$ a topology on $G$, we use the convention $\tau\cap Z=\{U\cap Z:U\in\tau\}\subset 2^Z$.
If $V\in\tau\cap Z$, we will conventionally say that ``$V$ is $\tau$-open in $Z$'', even in case $V\notin\tau$.
We denote by $L_p$ the left translation by $p$ on $G$.

\subsection{First Step: Contractibility}
\begin{proposition}\label{prop08160956} 
	Every  eigenvalue  of $A$ has   strictly positive real part.
	Consequently, $\lim_{\lambda\to0}\delta_\lambda(p) = e$, uniformly on $\tau_G$-compact sets in~$G^\circ$.
\end{proposition}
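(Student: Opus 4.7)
The plan is to derive the spectral claim ($\Re\alpha>0$ for every $\alpha\in\sigma(A)$) first; the uniform convergence consequence then follows routinely because $\delta_\lambda\circ\exp=\exp\circ\lambda^A$, the operator norm of $\lambda^A$ tends to $0$ as $\lambda\to0$ once the spectrum lies in $\{\Re>0\}$ (control via the Jordan form), and $G^\circ$ is generated by a compact exponential neighborhood of $e_G$, so a product-expansion plus equi-continuity argument upgrades uniform convergence on $\exp$-neighborhoods to uniform convergence on any $\tau_G$-compact subset of $G^\circ$.

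For the spectral claim I argue by contradiction, assuming there is $\alpha\in\sigma(A)$ with $\Re\alpha\le 0$, and I would split into two subcases.

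\emph{Case $\Re\alpha=0$.} Here I exploit periodicity of $\lambda\mapsto\lambda^A$ on the associated real $A$-invariant subspace. Write $\alpha=ib$ and set $\lambda_0:=e^{-2\pi/|b|}\in(0,1)$ when $b\ne 0$, or $\lambda_0$ any element of $(0,1)$ when $b=0$; in both subcases $\lambda_0^\alpha=1$. Picking a proper eigenvector $v\in\frk g_\C$ for $\alpha$ and setting $W:=\Span_\R\{\Re v,\Im v\}\subset\frk g$, a short computation using $A\Re v=-b\Im v$ and $A\Im v=b\Re v$ shows that $\lambda_0^A$ restricts to the identity on $W$. Hence for every $X\in W$ and $t\in\R$,
\[
\delta_{\lambda_0}(\exp(tX)) = \exp(t\,\lambda_0^A X) = \exp(tX),
\]
and the dilation identity forces $d(e_G,\exp(tX))=\lambda_0\,d(e_G,\exp(tX))$. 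Since $\lambda_0\ne 1$ and $d$ is a genuine distance, $\exp(tX)=e_G$ for all $t\in\R$. Choosing $t$ in the injectivity radius of $\exp$ at $0$ and $X\ne 0$ contradicts the local injectivity of $\exp$, hence $W=\{0\}$, hence $v=0$, a contradiction.

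\emph{Case $\Re\alpha<0$.} The periodicity trick is unavailable, so the plan is to reduce to Siebert's Proposition~\ref{prop12131456}. The crucial missing ingredient is the auxiliary claim that \emph{$d$-convergence to $e_G$ implies $\tau_G$-convergence in $G^\circ$}: whenever $q_n\in G^\circ$ satisfies $d(e_G,q_n)\to 0$, one has $q_n\to e_G$ in $\tau_G$. Granting this claim, fix any $\lambda_0\in(0,1)$; for every $p\in G^\circ$ one has $d(e_G,\delta_{\lambda_0}^n(p))=\lambda_0^n\,d(e_G,p)\to 0$, hence $\delta_{\lambda_0}^n(p)\to e_G$ in $\tau_G$. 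Thus $\delta_{\lambda_0}|_{G^\circ}$ is a topologically contractive automorphism, and Siebert's proposition forces every eigenvalue of $(\delta_{\lambda_0})_*=\lambda_0^A$ to have modulus $<1$, i.e., $\Re\alpha>0$ for every $\alpha\in\sigma(A)$, contradicting the hypothesis.

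The principal obstacle is exactly the auxiliary claim that $d$-smallness implies $\tau_G$-smallness, since a priori the metric topology $\tau_d$ need not even be comparable to the manifold topology $\tau_G$. I would attack it by a rescaling argument: assume some $q_n\in G^\circ$ has $d(e_G,q_n)\to 0$ but avoids a fixed open $\tau_G$-neighborhood $U$ of $e_G$ along a subsequence. Setting $\mu_n:=d(e_G,q_n)^{-1}\to\infty$, the rescaled points $\tilde q_n:=\delta_{\mu_n}(q_n)$ lie on the unit $d$-sphere; a $\tau_G$-subsequential limit (or divergence) of $\tilde q_n$ combined with contracting back by $\delta_{1/\mu_n}$ yields periodicity/fixed-point information on the dilation orbit that reduces matters to the already-settled periodic case $\Re\alpha=0$. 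Carrying this reduction out rigorously, while keeping $\tau_d$ and $\tau_G$ properly separated, is the real content of the proposition.
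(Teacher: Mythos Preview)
Your treatment of the case $\Re\alpha=0$ is correct and is essentially the paper's argument (its cases (1) and (3)): a proper eigenvector gives a periodic orbit of $\lambda\mapsto\lambda^A$, and the dilation identity then forces $d(e_G,\exp(tX))=0$, contradicting local injectivity of $\exp$.

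The case $\Re\alpha<0$, however, has a genuine gap, and you have identified it yourself: your route via Siebert requires the implication ``$d(e_G,q_n)\to0\Rightarrow q_n\to e_G$ in $\tau_G$'', and this is precisely what is \emph{not} available at this stage of the argument. In the paper's logical structure, the comparison $\tau_d\cap G^\circ\subset\tau_G\cap G^\circ$ is established only in Proposition~\ref{prop08161111}, whose proof (through Lemma~\ref{lem08211414}) relies on the present proposition. Your rescaling sketch does not break this circularity: you place the rescaled points $\tilde q_n$ on the unit $d$-sphere and then propose to extract a $\tau_G$-subsequential limit, but you have no reason to believe the $d$-unit sphere is $\tau_G$-bounded; indeed, that is exactly the content you are trying to prove.

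The paper avoids the circularity by never touching $\tau_G$ in this proposition: the argument for $\Re\alpha<0$ is a direct metric computation on one- or two-dimensional $A$-invariant subspaces. For a real eigenvalue $a<0$ with eigenvector $v$, the triangle inequality and left-invariance give
\[
d(e_G,\exp v)\;\le\; d(e_G,\exp(\lambda^a v)) + d(\exp((\lambda^a-1)v),e_G)
\;=\;\bigl(\lambda+|\lambda^a-1|^{1/a}\bigr)\,d(e_G,\exp v),
\]
which forces $\lambda+|\lambda^a-1|^{1/a}\ge1$ for all $\lambda>0$; but when $a<0$ the right-hand side tends to $0$ as $\lambda\to0^+$, a contradiction. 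For a complex pair $a\pm ib$ with $a<0$, either the real two-plane $\Span\{v_1,v_2\}$ is non-abelian, in which case $[v_1,v_2]$ is an eigenvector with real eigenvalue $2a<0$ and one reduces to the previous case, or it is abelian, in which case one exploits the spiral $\lambda\mapsto\lambda^{a+ib}$ to find arbitrarily small $\lambda_N,\mu_N$ with $\lambda_N^A+\Id=\mu_N^A$ on the plane, and the triangle inequality again yields the contradiction $d(e_G,\exp v_1)\le(\lambda_N+\mu_N)\,d(e_G,\exp v_1)\to0$. All of this uses only the distance axioms and the dilation identity, never the manifold topology.
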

\begin{proof}
	We choose a basis of $\g$ so that $A$ is in real-Jordan form: $A$ is a block diagonal matrix where each block is in one of the two forms
	\begin{equation}\label{eq08051712}
	J_a := 
	\begin{pmatrix}
	a & 1 & &  0\\
	 & \ddots & \ddots & \\
	 & & a & 1 \\
	0 & &  & a
	\end{pmatrix}
	\quad\text{or}\quad
	J_{ab} :=
	\begin{pmatrix}
	Z_{ab} & I & &  0\\
	 & \ddots & \ddots & \\
	 & & Z_{ab} & I \\
	0 & &  & Z_{ab}
	\end{pmatrix}
	,
	\end{equation}
	where $Z_{ab} := \begin{pmatrix} a & b \\ -b & a \end{pmatrix}$ and $I:=\begin{pmatrix} 1 & 0 \\ 0 & 1 \end{pmatrix}$, $a,b\in\R$ with $b\neq0$.
	
	We claim that in each block \eqref{eq08051712} the value $a$ is strictly positive.
	We need to consider five cases.
	
%
	
	\begin{enumerate}[leftmargin=*,label={(\arabic*)}]
	\item\label{item08051720} 	
	Consider $J_a$ with $a=0$.
	Then let $v\in\g$ be the vector that in each coordinate is zero except in the first one for $J_a$, where it is not zero.
	Hence $Av=av = 0$.
	Thus $\lambda^A v = \sum_{k=0}^\infty \frac{(\log(\lambda)A)^k}{k!} v = v$.
	Up to a scalar multiplication of $v$, we may suppose that $\exp(v)\neq e_G$.
	We reach a contradiction: for all $\lambda>0$ we have
	\[
	 	\lambda d(e,\exp(v)) 
		= d(e_G,\delta_{\lambda}\exp(v) ) 
		= d(e_G, \exp( \lambda^A v) ) 
		= d(e_G, \exp( v ) ),
	\]
	but the last term is a nonzero number independent on $\lambda$.
	\item\label{item08051739}
	Consider $J_a$ with $a<0$.
	Taking $v\in\g$ as in case \ref{item08051720}, we have $\lambda^Av = \sum_{k=0}^\infty \frac{(\log(\lambda)a)^k}{k!} v = \lambda^av $ and $\exp(v)\neq e_G$.
	Then we reach a contradiction: On the one hand, we have\footnotemark 
	\begin{align*}
	 	0\neq d(e_G,\exp(v))
		&\le d(e_G,\exp(\lambda^Av)) + d(\exp(\lambda^Av) , \exp(v)) \\
		&= d(e_G, \exp(\lambda^av) ) + d(\exp((\lambda^a-1)v) , e_G ) \\
		&= \lambda d(e_G,\exp(v)) + |\lambda^a-1|^{\frac1a} d(e_G,\exp(v)),
	\end{align*}
	that is \( \lambda-1 + |\lambda^a-1|^{\frac1a}\ge0 \) for all $\lambda>0$.
	On the other hand, if $a<0$, then $\lim_{\lambda\to0^+}\lambda-1 + |\lambda^a-1|^{\frac1a} = -1$.
	\footnotetext{
	Justification of the last equality: setting $\mu=(\lambda^a-1)^{1/a}$, we have 
	\[
	d(\exp((\lambda^a-1)v) , e_G )
	= d(\exp(\mu^a v),e) 
	= d(\exp(\mu^{A}v),e) 
	= \mu d(\exp(v),e) .
	\]
	}
	\item\label{item08051734}
	Consider the block $J_{ab}$ with $a=0$.
	Let $v_1$ (resp.~$v_2$) in $\g$ be the vector that in each coordinate is zero except in the first one (resp.~the second one) for $J_{ab}$. 
	Hence, for all $\lambda>0$,
	\begin{align*}
	 	\lambda^Av_1 &= \lambda^a (\cos(\log(\lambda)b)v_1 - \sin(\log(\lambda)b) v_2) \\
		\lambda^Av_2 &= \lambda^a (\sin(\log(\lambda)b)v_1 + \cos(\log(\lambda)b) v_2) .
	\end{align*}
	One may assume $\exp(v_1)\neq e_G$.
	Taking $\lambda_0=\exp(\frac{2\pi}{b})$ we reach a contradiction:
	\begin{align*}
	 	\lambda_0 d(e,\exp(v_1)) 
		= d(e, \exp(\lambda_0^A v_1))
		= d(e, \exp( v_1 ) ),
	\end{align*}
	since the last term is non zero, but $\lambda_0\neq1$.
	\item
	Consider the block $J_{ab}$ with $a<0$ and assume that, for $v_1$ and $v_2$ as in case \ref{item08051734}, $\Span\{v_1,v_2\}$ is not a commutative Lie algebra.
	Hence $v_3:=[v_1,v_2]\in\g\setminus\{0\}$.
	Then, since $\lambda^A$ is a Lie algebra automorphism, we get
	\begin{align*}
	 	\lambda^Av_3 
		= [\lambda^Av_1,\lambda^Av_2]
		= \lambda^{2a} (\cos^2(\log(\lambda)b) + \sin^2(\log(\lambda)b) ) [v_1,v_2]
		= \lambda^{2a} v_3 .
	\end{align*}
	By the argument in case \ref{item08051739}, we have a contradiction. 
	\item
	Consider a block $J_{ab}$ with $a<0$ and assume that, for $v_1,v_2\in\g$ as in case \ref{item08051734}, $\Span\{v_1,v_2\}$ is a commutative Lie algebra.
	Since $a<0$, the curve $\lambda\mapsto \lambda^{a+ib}\in\C$ is a spiral in the complex plane going to $\infty$ as $\lambda\to0^+$.
	Therefore, for all $N\in\N$ there are $\lambda_N,\mu_N\in(0,1/N)$ such that
	\begin{equation}\label{eq08051803}
	\lambda_N^{a+ib} + 1 = \mu_N^{a+ib} .
	\end{equation}
	One can geometrically prove the existence of such $\lambda_N$, $\mu_N$ by taking a point in the spiral at small parameter $\lambda$ 
	with horizontal tangent and then translating the horizontal line until the intersecting points differ by $(1,0)$.
	
	Notice that \eqref{eq08051803} implies
	\[
	\lambda_N^A + \Id = \mu_N^A
	\]
	in $\Span\{v_1,v_2\}$.
	Moreover, since $\Span\{v_1,v_2\}$ is an Abelian subalgebra, $\exp:\Span\{v_1,v_2\}\to G$ is a group morphism.
	
	Then we reach a contradiction:
	\begin{align*}
	 	0\neq d(e_G,\exp(v_1))
		&\le d(e_G, \exp(\mu_N^A v_1) ) + d(\exp(\mu_N^A v_1), \exp(v_1) ) \\
		&\le d(e_G, \exp(\mu_N^A v_1) ) + d(e_G, \exp((\mu_N^A-\Id)v_1)) \\
		&= d(e_G, \exp(\mu_N^A v_1) ) + d(e_G, \exp(\lambda_N^Av_1) ) \\
		&= (\mu_N + \lambda_N) d(e,v_1),
	\end{align*}
	because the last term tends to zero as $N\to\infty$.
	\end{enumerate}
	
	This completes the proof of our claim, i.e., that $a>0$ in each block~\eqref{eq08051712}.
	
	Recall that if
	\[
	J := 
	\begin{pmatrix}
	z & 1 & &  0\\
	 & \ddots & \ddots & \\
	 & & z & 1 \\
	0 & &  & z
	\end{pmatrix}
	\]
	is a $k\times k$ Jordan block with $z\in\C$, then
	\[
	e^{tJ} = 
	\begin{pmatrix}
	e^{tz} & \cdots & \frac{t^k}{k!} e^{tz} \\
	 & \ddots & \vdots \\
	0 & & e^{tz} 
	\end{pmatrix} ,
	\text{ that is, }
	\lambda^J = 
	\begin{pmatrix}
	\lambda^z & \cdots & \frac{\log(\lambda)^k}{k!} \lambda^z \\
	 & \ddots & \vdots \\
	0 & & \lambda^z 
	\end{pmatrix}  .
	\]
	Hence, if $\Re(z)>0$, then $\lambda^J \to 0$ as $\lambda\to0^+$.
	We deduce that for all $p\in\exp(\g)$
	\[
	\lim_{\lambda\to0^+} \delta_{\lambda}(p) = e_G .
	\]
	Let $p$ be in the connected component $G^\circ$ of the identity.
	Then there exist $p_1,\dots,p_m\in\exp(\g)$ such that $p=p_1\cdots p_m$.
	Therefore,
	\[
	\lim_{\lambda\to0^+} \delta_{\lambda}(p) 
	= \lim_{\lambda\to0^+} \delta_{\lambda}(p_1)\cdots\delta_{\lambda}(p_m) = e_G . \qedhere
	\]
\end{proof}
%

\subsection{Second Step: Proof of \texorpdfstring{$\tau_d\cap G^\circ\subset \tau_G\cap G^\circ$}{Tau d in Tau G}}
\begin{lemma}\label{lem08211414}
	There is $\Omega\subset G^\circ$ $\tau_G$-open such that $e_G\in\Omega$ and $\Omega\subset B(e_G,1)$.
\end{lemma}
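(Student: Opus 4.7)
The plan has three steps: a scaling reduction, a reduction to one-parameter subgroups via exponential coordinates, and an induction based on the Jordan and central-series structure.

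\emph{Reduction via dilations.} Suppose I have found a $\tau_G$-compact neighborhood $K\subset G^\circ$ of $e_G$ with $M:=\sup_{x\in K}d(e_G,x)<\infty$. Set $\Omega:=\delta_{1/(M+1)}(\mathrm{int}_{\tau_G}(K))$. Since $\delta_{1/(M+1)}$ is a $\tau_G$-homeomorphism, $\Omega$ is $\tau_G$-open and contains $e_G$; and for $y=\delta_{1/(M+1)}(x)\in\Omega$ we have $d(e_G,y)=d(e_G,x)/(M+1)\le M/(M+1)<1$, so $\Omega\subset B(e_G,1)$. Thus the lemma reduces to producing such a bounded $K$.

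\emph{Exponential coordinates.} Proposition~\ref{prop08160956} gives that every eigenvalue of $A$ has strictly positive real part, so each $\delta_\lambda$ with $\lambda<1$ is a contractive automorphism of $G^\circ$; Proposition~\ref{prop12131456} then yields that $G^\circ$ is nilpotent and simply connected, so $\exp\colon\frk g\to G^\circ$ is a $\tau_G$-diffeomorphism. Pick a basis $v_1,\dots,v_n$ of $\frk g$ coming from a real-Jordan basis of $A$ and set $\Phi(t_1,\dots,t_n):=\exp(t_1v_1)\cdots\exp(t_nv_n)$. Then $\Phi$ is a local $\tau_G$-diffeomorphism at $0$, so $K:=\Phi([-1,1]^n)$ is a $\tau_G$-compact neighborhood of $e_G$; by left-invariance and the triangle inequality,
\[
d(e_G,\Phi(t_1,\dots,t_n))\le\sum_{i=1}^n d(e_G,\exp(t_iv_i)),
\]
and the problem reduces to showing $\sup_{|t|\le 1}d(e_G,\exp(tv))<\infty$ for each real-Jordan basis vector $v$.

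\emph{One-parameter estimate.} Fix $v$ in a real-Jordan block of $A$ with real part of eigenvalue equal to $a>0$. When $v$ is a real eigenvector with eigenvalue $a$, the identity $\exp(tv)=\delta_{|t|^{1/a}}\exp(\mathrm{sgn}(t)v)$ gives $d(e_G,\exp(tv))=|t|^{1/a}d(e_G,\exp(\pm v))$, bounded on $[-1,1]$. For general $v$ one has $\lambda^Av=\lambda^a v+\lambda^a\sum_{j\ge 1}\frac{(\log\lambda)^j}{j!}v^{(j)}$ with the $v^{(j)}$ of strictly smaller Jordan depth; the BCH formula---polynomial on the nilpotent $\frk g$---then writes $\exp(\lambda^av)=\exp(\lambda^Av)\cdot\exp(S(\lambda))$ with $S(\lambda)\in\frk g$ a polynomial in $\lambda^a(\log\lambda)^jv^{(j)}$ and their iterated commutators, whose scalar coefficients stay bounded on $(0,1]$ because $a>0$. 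The triangle inequality then reduces bounding $d(e_G,\exp(\lambda^av))$ to bounding $d(e_G,\exp(S(\lambda)))$, and the latter only involves strictly simpler data (vectors of lower Jordan depth in the same block, or elements of higher central-series step in $\frk g$ produced by the commutators). An induction on the lexicographic pair (central-series step, Jordan depth) then closes the argument and yields the bound with $t=\lambda^a\in(0,1]$; the case $t\in[-1,0)$ is handled by replacing $v$ by $-v$. Complex-eigenvalue blocks are treated analogously using the $2\times 2$ real-Jordan blocks, with $\lambda^a$ replaced by a rotating scalar of modulus $\lambda^a$.

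The main obstacle is the set-up of this induction, so that every BCH-correction term is strictly simpler than $\exp(tv)$ itself. The positivity of the real parts of the eigenvalues of $A$ (Proposition~\ref{prop08160956}) is crucial throughout: it is what tames the logarithmic factors $\lambda^a|\log\lambda|^j$ as $\lambda\to 0^+$.
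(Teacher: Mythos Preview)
Your approach differs from the paper's and, while the overall strategy is viable, Step~3 as written does not close.

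The paper never invokes nilpotency or simple-connectedness of $G^\circ$ here. Instead of bounding $d$ along one-parameter subgroups $t\mapsto\exp(tv)$, it uses the map
\[
\phi(t_1,\dots,t_n)=\prod_{j=1}^n\bigl(\exp(v_j)\cdot\delta_{e^{t_j}}\exp(-v_j)\bigr),
\]
each of whose factors satisfies $d\bigl(e_G,\exp(v_j)\,\delta_{e^{t_j}}\exp(-v_j)\bigr)\le d(e_G,\exp(v_j))+e^{t_j}\,d(e_G,\exp(-v_j))$ directly --- no recursion is needed. The work goes instead into showing that $D\phi(0)$ is surjective, which the paper does by computing $\partial_{t_j}\phi(0)=-\sum_{k\ge 0}\frac{1}{(k+1)!}\ad_{v_j}^k(Av_j)$ and using the positive grading furnished by Proposition~\ref{prop08160956} to verify linear independence. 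Replacing $\exp(tv)$ by $\exp(v)\,\delta_{e^t}\exp(-v)$ is precisely the device that makes your induction unnecessary.

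The gap in your Step~3 is this: the correction $S(\lambda)$ you produce is a general bounded element of $\frk g$, not a one-parameter curve along a single Jordan-basis vector, so the inductive hypothesis ``$\sup_{|t|\le 1}d(e_G,\exp(tv))<\infty$ for Jordan vectors $v$'' is too weak to bound $d(e_G,\exp(S(\lambda)))$. Decomposing $\exp(S(\lambda))$ into one-parameter factors via BCH generates further corrections, and you must argue these all lie in a strictly smaller class. This is repairable --- choose a Jordan basis adapted to the lower-central-series filtration (possible since each $\frk g^k$ is $A$-invariant), strengthen the outer inductive hypothesis to ``$d(e_G,\exp(w))$ is bounded for bounded $w\in\frk g^{k+1}$'', and run an inner depth induction within each step --- but none of this bookkeeping is in your write-up, and without it the argument does not terminate. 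Your appeal to Siebert via Proposition~\ref{prop12131456} is legitimate and not circular, but note that the paper deliberately avoids it at this stage.
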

\begin{proof}
	Let $v_1,\dots,v_n\in\frk g$ be a basis of $\frk g$.
	Define $\phi:\R^{n}\to G^\circ$ by
	\[
	\phi(t_1,\dots,t_{n}) = \bigodot_{j=1}^n (\exp(v_j) \cdot \delta_{e^{t_{j}}}\exp(v_j)^{-1}) .
	\]
	Set $\bar t=(0,\dots,0)$ and notice that $\phi(\bar t) = e_G$.
	Moreover,
	\begin{align*}
	\frac{\de \phi}{\de t_{j}} (\bar t)
	&= \left.\frac{\dd}{\dd t} \right|_{t=0} (\exp(v_j)\cdot \delta_{e^t} \exp(v_j)^{-1} )\\
	&= \dd L_{\exp(v_j)}|_{\exp(v_j)^{-1}} \left(\left.\frac{\dd}{\dd t} \right|_{t=0}\delta_{e^t} \exp(v_j)^{-1}  \right) .
	\end{align*}
	Recall (see for instance \cite[Thm~2.14.3]{MR746308}) that  the differential of the exponential map is
	\[
	\dd \exp|_x(y) 
	= \dd L_{\exp(x)}|_{e_G}\left( \sum_{k=0}^\infty \frac{(-1)^k}{(k+1)!} \ad_x^k(y) \right) , \qquad \forall x,y\in\frk g.
	\]
	For $w\in\frk g$, we have
	\begin{align*}
	\left.\frac{\dd}{\dd t}\right|_{t=0} \delta_{e^t} \exp(w) 
	&= \left.\frac{\dd}{\dd t}\right|_{t=0} \exp((\delta_{e^t})_*w) 
	= \dd\exp|_w \left.\frac{\dd}{\dd t} \right|_{t=0}e^{tA}w \\
	&= \dd L_{\exp(w)}|_{e_G} \left(\sum_{k=0}^\infty \frac{(-1)^k}{(k+1)!} \ad_w^k(Aw) \right) .
	\end{align*}
	Recall that $\exp(v_j)^{-1}=\exp(-v_j)$. 
	Using the latter formula with $w=-v_j$, we obtain
	\begin{align*}
	\frac{\de \phi}{\de t_{j}} (\bar t) 
	&= \dd L_{\exp(v_j)}|_{\exp(-v_j)} \left(-\dd L_{\exp(-v_j)}|_{e_G} \left(\sum_{k=0}^\infty \frac{1}{(k+1)!} \ad_{v_j}^k(Av_j) \right) \right) \\
	&= -\left(\sum_{k=0}^\infty \frac{1}{(k+1)!} \ad_{v_j}^k(Av_j) \right) .
	\end{align*}
	
	Since the real part of the eigenvalues of $A$ are strictly positive by Proposition~\ref{prop08160956}, we have that $\ker(A)=\{0\}$, i.e., that $(Av_1,\dots,Av_n)$ is a basis for~$\frk g$.
	
	We claim that $\left( \frac{\de \phi}{\de t_{j}} (\bar t) \right)_{j=1}^n$ is a basis for $\frk g$.
	Without loss of generality, we may assume that 
	$v_1,\dots,v_n\in\frk g$ is a basis that is adapted to the grading 
	induced by $A$ as in 
	Proposition~\ref{prop12061739}, which is positive because of Proposition~\ref{prop08160956}.
	We write this grading as $\frk g=\bigoplus_{\ell=1}^s V_{t_\ell}$ with $0<t_1<t_2<\dots<t_s$.
	Notice that, if $v_j\in V_{r_\ell}$, then $\ad^k_{v_j}(Av_j)\in V_{(k+1)r_\ell}$, 
	and thus we have
	\[
	\frac{\de \phi}{\de t_{j}} (\bar t) 
	= - Av_j \mod(V_{r_{\ell+1}}\oplus\dots\oplus V_{r_s}).
	\]
	One easily concludes that $\left( \frac{\de \phi}{\de t_{j}} (\bar t) \right)_{j=1}^n$ are linearly independent.
		
	We have obtained that $D \phi(\bar t)$ is surjective. 
	Hence, there is $\tilde\Omega\subset G^\circ$ open with $e_G\in\tilde\Omega$ and
	\[
	\tilde\Omega\subset \phi\{(t_j)_j: -1< t_j < 1\}  .
	\]
	So, if $p\in\tilde\Omega$, then $p=\phi(t_1,\dots,t_{n})$ with $t_j\in(-1,1)$.
	Therefore,
	\begin{align*}
	d(e_G,p)
	&\le \sum_{j=1}^n d(e_G,\exp(v_j)) + d(e_G,\delta_{e^{t_{j}}}\exp(-v_j)) \\
	&\le  \sum_{j=1}^n  d(e_G,\exp(v_j)) + e d(e_G,\exp(-v_j)) 
	< \infty
	\end{align*}
	Let $\epsilon =  \left(2 \sum_{j=1}^n [ d(e_G,\exp(v_j)) + e d(e_G,\exp(-v_j))] \right)^{-1}$ and define $\Omega=\delta_\epsilon\tilde\Omega$. 
	The proof is concluded because $\Omega$ is $\tau_G$-open, $e_G\in\Omega$ and $\Omega\subset B(e_G,1)$.
\end{proof}
\begin{proposition}\label{prop08161111}
	\[
	\tau_d\cap G^\circ \subset \tau_G \cap G^\circ .
	\]
\end{proposition}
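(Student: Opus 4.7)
The plan is to reduce the inclusion to a pointwise statement: since the $d$-open balls form a basis for $\tau_d$, it suffices to exhibit, for every $p \in G^\circ$ and every $\epsilon > 0$, a $\tau_G$-open neighborhood of $p$ contained in $B(p, \epsilon)$. The entire substance is already packed into Lemma~\ref{lem08211414}; once a distinguished $\tau_G$-open set $\Omega \subset G^\circ$ with $e_G \in \Omega \subset B(e_G, 1)$ is available, the two group operations at our disposal---metric dilations $\delta_\lambda$ and left translations $L_p$ with $p \in G^\circ$---are both $\tau_G$-homeomorphisms preserving $G^\circ$, and we can rescale and translate $\Omega$ to fit inside any prescribed $d$-ball.

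More precisely, the first step is to rescale. For every $\epsilon > 0$, the set $\delta_\epsilon \Omega$ is $\tau_G$-open in $G^\circ$ (since $\delta_\epsilon \in \Aut(G)$ fixes $e_G$ and preserves the identity component) and sits inside $B(e_G, \epsilon)$: any point of $\delta_\epsilon \Omega$ has the form $\delta_\epsilon q$ for some $q \in \Omega$, and the defining $A$-homogeneity gives $d(e_G, \delta_\epsilon q) = \epsilon\, d(e_G, q) < \epsilon$. The second step is to translate. Given $p \in G^\circ$, the left translation $L_p$ is a $\tau_G$-homeomorphism that maps $G^\circ$ bijectively onto itself, so $L_p(\delta_\epsilon \Omega)$ is a $\tau_G$-open subset of $G^\circ$ containing $p = L_p(e_G)$; by left-invariance of $d$ it is contained in $L_p(B(e_G, \epsilon)) = B(p, \epsilon)$.

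Combining these two steps, for $V \in \tau_d$ and $p \in V \cap G^\circ$ I would choose $\epsilon > 0$ with $B(p, \epsilon) \subset V$ and note that $L_p(\delta_\epsilon \Omega)$ is a $\tau_G$-open neighborhood of $p$ contained in $V \cap G^\circ$. Expressing $V \cap G^\circ$ as the union of these neighborhoods as $p$ ranges over $V \cap G^\circ$ presents it as a $\tau_G$-open subset of $G$, which is exactly the desired inclusion. The real obstacle in the entire argument lies upstream: verifying that the map $\phi$ in Lemma~\ref{lem08211414} has surjective differential at the origin, which itself rests on Proposition~\ref{prop08160956} forcing the spectrum of $A$ into the open right half-plane. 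After that input, the present proposition is a formal rescale-and-translate consequence.
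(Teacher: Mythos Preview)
Your proof is correct and follows essentially the same route as the paper's: invoke Lemma~\ref{lem08211414} to obtain the $\tau_G$-open set $\Omega\subset B(e_G,1)\cap G^\circ$, then rescale by $\delta_\epsilon$ and left-translate by $p\in G^\circ$ to produce a $\tau_G$-open neighborhood $p\,\delta_\epsilon\Omega\subset B(p,\epsilon)\cap G^\circ$ inside any given $\tau_d$-open set. The paper's argument is terser but identical in substance.
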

\begin{proof}
	Let $U\in\tau_d\cap G^\circ$ and $p\in U$.
	Then there is $r>0$ such that $B(p,r)\cap G^\circ\subset U$.
	Therefore, if $\Omega$ is like in Lemma~\ref{lem08211414}, $p\delta_r\Omega\subset p\delta_rB(e_G,1)\cap G^\circ = B(p,r)\cap G^\circ\subset U$.
	Since $\Omega$ is $\tau_G$-open in $G^\circ$, then $p\in {\rm {int}}_{\tau_G\cap G^\circ}(U)$.
	Since this holds for all $p\in U$, then $U\in\tau_G\cap G^\circ$.
\end{proof}
\begin{corollary}\label{cor09042136}
	$d:G^\circ\times G^\circ\to[0,+\infty)$ is $\tau_G$-continuous.
\end{corollary}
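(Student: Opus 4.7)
The plan is to deduce this essentially immediately from Proposition~\ref{prop08161111} together with the basic fact that any metric is continuous with respect to its own induced product topology. The point is that Proposition~\ref{prop08161111} says exactly that the identity map $\iota\colon(G^\circ,\tau_G)\to (G^\circ,\tau_d\cap G^\circ)$ is continuous: every set of the form $U\cap G^\circ$ with $U\in\tau_d$ lies in $\tau_G\cap G^\circ$, so preimages of $\tau_d$-open sets under $\iota$ are $\tau_G$-open in $G^\circ$.

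Concretely, fix $(p,q)\in G^\circ\times G^\circ$ and $\epsilon>0$. By the reverse triangle inequality,
\[
|d(p',q')-d(p,q)|\le d(p',p)+d(q',q)\qquad\forall (p',q')\in G^\circ\times G^\circ,
\]
so it suffices to exhibit $\tau_G$-open neighborhoods $U\ni p$ and $V\ni q$ in $G^\circ$ with $d(p',p)<\epsilon/2$ for $p'\in U$ and $d(q',q)<\epsilon/2$ for $q'\in V$. The sets $B(p,\epsilon/2)\cap G^\circ$ and $B(q,\epsilon/2)\cap G^\circ$ belong to $\tau_d\cap G^\circ$, hence to $\tau_G\cap G^\circ$ by Proposition~\ref{prop08161111}, so they serve as the required $U$ and $V$. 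This proves the joint $\tau_G$-continuity of $d$ on $G^\circ\times G^\circ$.

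There is no real obstacle here; the whole content of the corollary has been absorbed into Proposition~\ref{prop08161111}. The only point to keep in mind is that we do \emph{not} yet know the reverse inclusion $\tau_G\cap G^\circ\subset \tau_d\cap G^\circ$ (that will be part of the eventual admissibility statement), but it is not needed for continuity of $d$: we only need that $\tau_d$-open sets in $G^\circ$ are $\tau_G$-open, which is what we have.
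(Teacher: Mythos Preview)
Your proof is correct and follows essentially the same approach as the paper's: both use Proposition~\ref{prop08161111} to conclude that $d$-balls intersected with $G^\circ$ are $\tau_G$-open, and then apply the triangle inequality. The only cosmetic difference is that the paper first establishes continuity of $p\mapsto d(e_G,p)$ at $e_G$ and then uses left-invariance of $d$ (and $\tau_G$-continuity of the group operations) to handle general points, whereas you work directly at an arbitrary pair $(p,q)$; your version is slightly more streamlined.
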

\begin{proof}
	First, we prove that $p\mapsto d(e_G,p)$ is continuous in $e_G$.
	Indeed, if $p_k\overset{\tau_G}{\to} e_G$ in $G^\circ$ and $\epsilon>0$, then, by Proposition~\ref{prop08161111}, $\{p\in G^\circ:d(e_G,p)<\epsilon\}\in\tau_d\cap G^\circ\subset\tau_G\cap G^\circ$.
	Hence there is $N\in\N$ such that $d(e_G,p_k)<\epsilon$ for all $k>N$.
	
	Second, we prove that $d:G^\circ\times G^\circ\to[0,+\infty)$ is continuous.
	Let $p_k\overset{\tau_G}{\to} p$ in $G^\circ$ and $q_k\overset{\tau_G}{\to} q$ in $G^\circ$.
	Then
	\[
	|d(p_k,q_k)-d(p,q)| \le d(e_G,p_k^{-1}p) + d(e_G,q_k^{-1}q) \to 0. \qedhere
	\]
\end{proof}

\subsection{Third Step: Proof of \texorpdfstring{$\tau_G\cap G^\circ\subset \tau_d\cap G^\circ$}{Tau G in Tau d}}
\begin{lemma}\label{lem08161113}
	There is $\Omega\subset G^\circ$ $\tau_G$-precompact such that $B(e_G,1)\cap G^\circ\subset \Omega$.
\end{lemma}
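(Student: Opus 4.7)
The plan is to use Siebert's Proposition~\ref{prop12131456} to identify $G^\circ$ with a simply connected nilpotent Lie group via its exponential map, and then run a rescaling argument in Lie algebra coordinates. The conceptual heart of the argument will be the rescaling, which I expect to be the main step.

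The first move is to apply Proposition~\ref{prop12131456} to the automorphism $\delta_{1/2}\in\Aut(G^\circ)$. By Proposition~\ref{prop08160956}, the iterates $(\delta_{1/2})^n=\delta_{2^{-n}}$ converge to $e_G$ uniformly on $\tau_G$-compact subsets of $G^\circ$, so $\delta_{1/2}$ is contractive on $G^\circ$. Proposition~\ref{prop12131456} then gives that $G^\circ$ is simply connected and nilpotent, so $\exp\colon\frk g\to G^\circ$ is a $\tau_G$-diffeomorphism. Fixing a Euclidean norm $|\cdot|$ on $\frk g$, it will suffice to show that $E:=\exp^{-1}(B(e_G,1)\cap G^\circ)\subset\frk g$ is Euclidean-bounded; indeed, $\Omega:=\exp(\overline{B^{\rm{Eucl}}_{\frk g}(0,R)})$ for any $R$ with $E\subset B^{\rm{Eucl}}_{\frk g}(0,R)$ will then be $\tau_G$-compact (continuous image of a compact set) and contain $B(e_G,1)\cap G^\circ$.

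The boundedness of $E$ will be proved by contradiction. Suppose $v_n\in E$ with $|v_n|\to\infty$. Since every eigenvalue of $A$ has strictly positive real part by Proposition~\ref{prop08160956}, the operator $\lambda^A$ is invertible with $\|\lambda^A\|_{\rm{op}}\to 0$ as $\lambda\to 0^+$ and $\|\lambda^{-A}\|_{\rm{op}}\to 0$ as $\lambda\to\infty$. Consequently, for each $n$ there exists $\lambda_n>0$ with $|\lambda_n^A v_n|=1$, and the inequality $|\lambda^A v|\ge|v|/\|\lambda^{-A}\|_{\rm{op}}$ combined with $|v_n|\to\infty$ forces $\lambda_n\to 0$. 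Setting $w_n:=\lambda_n^A v_n$, one has $\exp(w_n)=\delta_{\lambda_n}\exp(v_n)$, so the $A$-homogeneity of $d$ yields
\[
d(e_G,\exp(w_n))=\lambda_n\,d(e_G,\exp(v_n))<\lambda_n\longrightarrow 0.
\]
After passing to a subsequence, $w_n\to w$ in $\frk g$ with $|w|=1$. The $\tau_G$-continuity of $\exp$ together with that of $d$ (Corollary~\ref{cor09042136}) then gives $d(e_G,\exp(w))=0$; since $\exp$ is injective, $w=0$, contradicting $|w|=1$.

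The main obstacle is that we do not yet know $\tau_d=\tau_G$ on $G^\circ$, so there is no direct way to convert $d$-smallness into $\tau_G$-smallness; this is precisely why one cannot just argue that some $d$-ball is contained in a $\tau_G$-compact neighborhood. The key insight making the plan work is that Proposition~\ref{prop08160956} supplies simultaneously (i) the strict positivity of the real parts of $\sigma(A)$, which makes $\lambda^A$ genuinely contractive on $\frk g$ and controls the rescaling, and (ii) the contractivity hypothesis needed to invoke Siebert's theorem. Once $\exp$ is a global diffeomorphism, the $A$-homogeneity of $d$ becomes a clean scaling identity at the Lie algebra level on which the rescaling argument can operate.
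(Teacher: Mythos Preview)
Your argument is correct. The invocation of Siebert's Proposition~\ref{prop12131456} is legitimate at this stage: Proposition~\ref{prop08160956} already gives that $\delta_{1/2}$ is contractive on $G^\circ$, and Siebert's result is established in Section~\ref{sec10222252}, independently of the material in Section~\ref{sec11072237}. Once $\exp\colon\frk g\to G^\circ$ is a diffeomorphism, your rescaling-to-the-unit-sphere argument goes through cleanly, with Corollary~\ref{cor09042136} supplying the $\tau_G$-continuity of $d$ needed at the limit.

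The paper, however, proceeds differently: it avoids Siebert and any use of global exponential coordinates. Instead it nests two $\tau_G$-precompact open neighborhoods $\Omega_1\subset\Omega_2$ of $e_G$ with $\delta_{e^t}\bar\Omega_1\subset\Omega_2$ for all $t\le 0$, sets $m=\min_{\partial\Omega_1} d(e_G,\cdot)>0$ using Corollary~\ref{cor09042136}, and shows that any $p\in B(e_G,1)\cap G^\circ$ lies in $\delta_{1/m}\Omega_2\cup\Omega_2$: if $p\notin\Omega_2$, the continuous curve $t\mapsto\delta_{e^t}p$ must hit $\partial\Omega_1$, and the scaling relation bounds the rescaling factor below by $m$. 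Conceptually both proofs are the same rescaling trick---bring a point onto a fixed compact ``shell'' and compare the metric and topological scales---but the paper's version is purely topological and does not presuppose the nilpotent/simply connected structure of $G^\circ$, whereas yours trades that generality for the convenience of linear coordinates. Your route is shorter once Siebert is in hand; the paper's is more self-contained within the proof of Theorem~\ref{thm11171734}.
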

\begin{proof}
	Let $\Omega_2\subset G^\circ$ be a $\tau_G$-open set in $G^\circ$ such that $e_G\in\Omega_2$ and $\bar\Omega_2$ is $\tau_G$-compact.
	Since, by Proposition~\ref{prop08160956}, $\lim_{t\to-\infty}\delta_{e^t}\Omega_2=\{e_G\}$ uniformly in $\tau_G$, there is a $\tau_G$-open set $\Omega_1\subset \Omega_2$ with $e_G\in \Omega_1$ such that $\delta_{e^t}\bar\Omega_1\subset \Omega_2$ for all $t\le0$.
	
	Since $\de\Omega_1$ is $\tau_G$-compact and does not contain $e_G$, and since $d$ is $\tau_G$-continuous on~$G^\circ$, then $m=\min\{d(e_G,p):p\in\de\Omega_1\}>0$.
	We claim that $B(e_G,1)\cap G^\circ\subset \delta_{1/m}\Omega_2\cup\Omega_2$.
	Let $p\in B(e_G,1)\cap G^\circ$.
	If $p\in\Omega_2$, then we are done.
	If $p\notin\Omega_2$, then there is $t<0$ such that $\delta_{e^t}p\in\de\Omega_1$, because $\lim_{t\to-\infty}\delta_{e^t}p=e_G$ by Proposition~\ref{prop08160956} and because the curve $t\mapsto \delta_{e^t}p$ is $\tau_G$-continuous.
	We have $m\le d(e_G,\delta_{e^t}p) = e^td(e_G,p) \le e^t$.
	Therefore, $\delta_mp = \delta_{me^{-t}}\delta_{e^t}p \in\delta_{me^{-t}}\bar\Omega_1 \subset\Omega_2$, because $me^{-t}\le1$.
	We conclude that $p\in \delta_{1/m}\Omega_2\cup\Omega_2$.
	
	The proof of the lemma is concluded, because $\Omega=\delta_{1/m}\Omega_2\cup\Omega_2$ is $\tau_G$-precompact.
\end{proof}
\begin{proposition}\label{prop08161112}
	\[
	\tau_G \cap G^\circ\subset \tau_d\cap G^\circ .
	\]
\end{proposition}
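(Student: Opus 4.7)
The plan is to exploit left-invariance to reduce the problem to neighborhoods of $e_G$, and then use dilations to rescale the $\tau_G$-precompact set produced by Lemma~\ref{lem08161113} down into an arbitrarily small $\tau_G$-neighborhood.

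First I would observe that both topologies $\tau_G$ and $\tau_d$ make all left translations $L_p$ into homeomorphisms of $G$ (for $\tau_d$ by left-invariance of $d$, for $\tau_G$ because $G$ is a Lie group); moreover each $L_p$ with $p\in G^\circ$ preserves $G^\circ$. Consequently it suffices to prove the following local statement at the identity: \emph{for every $U\in\tau_G$ with $e_G\in U$, there exists $r>0$ such that $B(e_G,r)\cap G^\circ\subset U$.} Indeed, given an arbitrary $U\in\tau_G\cap G^\circ$ and $p\in U$, applying the local statement to $L_{p^{-1}}U$ (a $\tau_G$-neighborhood of $e_G$) and translating back by $L_p$ yields a $d$-ball around $p$ contained in $U$, so $U\in\tau_d\cap G^\circ$.

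To prove the local statement, fix $U\in\tau_G$ with $e_G\in U$. By Lemma~\ref{lem08161113} there is a $\tau_G$-precompact $\Omega\subset G^\circ$ with $B(e_G,1)\cap G^\circ\subset\Omega$, so $\bar\Omega$ (the $\tau_G$-closure) is $\tau_G$-compact. Since each $\delta_\lambda$ is a Lie group automorphism, it is a $\tau_G$-homeomorphism, fixes $e_G$ and therefore preserves $G^\circ$; since it is a metric dilation of factor $\lambda$ on $(G,d)$, one has
\[
B(e_G,r)\cap G^\circ \;=\; \delta_r\bigl(B(e_G,1)\cap G^\circ\bigr)\;\subset\;\delta_r\Omega\;\subset\;\delta_r\bar\Omega
\]
for every $r>0$. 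By Proposition~\ref{prop08160956}, $\delta_r\to e_G$ uniformly on $\tau_G$-compact subsets of $G^\circ$ as $r\to 0^+$; applied to the compact set $\bar\Omega$, this gives some $r>0$ with $\delta_r\bar\Omega\subset U$. Combining the two displays yields $B(e_G,r)\cap G^\circ\subset U$, which completes the proof.

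I do not expect a serious obstacle here: the argument is essentially the scaling dual of Proposition~\ref{prop08161111}, with Lemma~\ref{lem08161113} playing the role of Lemma~\ref{lem08211414}. The only small point to verify carefully is that $\delta_r$ preserves $G^\circ$ (used so that $\delta_r(B(e_G,1)\cap G^\circ)=\delta_r B(e_G,1)\cap G^\circ=B(e_G,r)\cap G^\circ$), which is immediate from continuity of $\delta_r$ in $\tau_G$ and $\delta_r(e_G)=e_G$.
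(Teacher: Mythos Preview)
Your proof is correct and follows essentially the same approach as the paper: both use Lemma~\ref{lem08161113} together with the uniform contraction from Proposition~\ref{prop08160956} to fit a dilated copy of $\Omega$ inside any given $\tau_G$-neighborhood, then invoke $B(e_G,r)\cap G^\circ\subset\delta_r\Omega$. The only cosmetic difference is that the paper works directly at an arbitrary point $p\in U$ (using $p\delta_r\Omega\subset U$) rather than first reducing to $e_G$ via left translation, and it treats $\Omega$ itself as open (which is clear from the proof of Lemma~\ref{lem08161113}, though not stated) instead of passing to $\bar\Omega$.
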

\begin{proof}
	Let $\Omega\in \tau_G\cap G^\circ$ as in Lemma~\ref{lem08161113}. 
	Since $\lim_{t\to0}\delta_t\Omega = \{e_G\}$  by Proposition~\ref{prop08160956}, the family of open sets $\{\delta_t\Omega\}_{t>0}$ is a system of $\tau_G$-neighborhoods of $e_G$ in $G^\circ$.
	If $U\in\tau_G\cap G^\circ$ and $p\in U$, then there is $r>0$ such that $p\delta_r\Omega\subset U$.
	Therefore, $B(p,r)\cap G^\circ = p\delta_r B(e_G,1)\cap G^\circ \subset p\delta_r\Omega\subset U$, i.e., $p\in {\rm {int}}_{\tau_d\cap G^\circ}(U)$.
	Since this holds for all $p\in U$, we obtain $U\in\tau_d\cap G^\circ$.
\end{proof}

\subsection{Fourth Step: \texorpdfstring{$G$}{G} is Connected}
\begin{lemma}\label{lem:1231}
	$G$ is connected. 
\end{lemma}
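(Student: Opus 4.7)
The plan is to argue by contradiction: assume $G \neq G^\circ$ and fix $p \in G \setminus G^\circ$.

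First, I claim that $\delta_\lambda(p) \in pG^\circ$ for every $\lambda > 0$. Indeed, the map $\lambda \mapsto p^{-1}\delta_\lambda(p)$ is $\tau_G$-continuous on the connected interval $(0,\infty)$ and sends $1$ to $e_G$, so its image lies in the identity component $G^\circ$. Setting $c_n := \delta_{1/n}(p) \in pG^\circ$ therefore gives $d(e_G,c_n) = d(e_G,p)/n \to 0$, and the elements $h_n := p^{-1}c_n$ lie in $G^\circ$ with $d(p^{-1},h_n) = d(e_G,c_n) \to 0$. In particular, $\{h_n\}$ is a $d$-Cauchy sequence in $G^\circ$ whose $\tau_d$-limit in $G$ equals $p^{-1} \notin G^\circ$.

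Next, I will show that $(G^\circ,\,d|_{G^\circ})$ is complete. Combined with uniqueness of limits in the metric space $(G,d)$, this forces $p^{-1}$ to lie in $G^\circ$, which is the contradiction we seek. Given a $d$-Cauchy sequence $\{q_n\}\subset G^\circ$, it is $d$-bounded, say $q_n\in B(e_G,M)$ for some $M>0$; by $A$-homogeneity, $\delta_{1/M}(q_n)\in B(e_G,1)\cap G^\circ$, which by Lemma~\ref{lem08161113} lies in a $\tau_G$-precompact subset of $G^\circ$. Extracting a $\tau_G$-convergent subsequence $\delta_{1/M}(q_{n_k})\to r\in G^\circ$ and applying the $\tau_G$-homeomorphism $\delta_M$ yields $q_{n_k}\to \delta_M(r)\in G^\circ$ in $\tau_G$; since $\tau_G$ and $\tau_d$ agree on $G^\circ$ by Propositions~\ref{prop08161111} and~\ref{prop08161112}, the convergence also holds in $\tau_d$, and then the Cauchy hypothesis promotes it to $\tau_d$-convergence of the entire sequence $\{q_n\}$ inside $G^\circ$.

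The main obstacle, and the reason this argument is not immediate, is recognizing that Lemma~\ref{lem08161113} delivers exactly the compactness inside $G^\circ$ required to make $d|_{G^\circ}$ complete; once one has this completeness, the existence of points of $G\setminus G^\circ$ that are $\tau_d$-close to $e_G$ is ruled out automatically, and the connectedness of $G$ follows.
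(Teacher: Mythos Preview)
Your proof is correct and follows essentially the same route as the paper's: both use that $\delta_\lambda(p)\in pG^\circ$, that $d(e_G,\delta_\lambda(p))\to 0$, and Lemma~\ref{lem08161113} together with Propositions~\ref{prop08161111} and~\ref{prop08161112} to extract a limit inside the identity component. The only difference is cosmetic: the paper works directly in the coset $pG^\circ$ and shows that a $\tau_G$-subsequential limit $q$ of $\delta_{e^{t_k}}(p)$ must satisfy $d(e_G,q)=0$, whereas you left-translate by $p^{-1}$ and repackage the compactness step as completeness of $(G^\circ,d|_{G^\circ})$.
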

\begin{proof}
	Notice that, since both $\tau_d$ and $\tau_G$ are left-invariant and by Propositions~\ref{prop08161111} and~\ref{prop08161112}, we have for every $p\in G$
	\[
	\tau_G\cap pG^\circ = p(\tau_G\cap G^\circ) = p(\tau_d\cap G^\circ) = \tau_d\cap pG^\circ  .
	\]
	
	Let $p\in G$.
	Since the curve $t\mapsto \delta_{e^t}p$ is $\tau_G$-continuous, we have $\delta_{e^t}p\in pG^\circ$ for all $t\in\R$.
	Moreover, $d(p,\delta_{e^t}(p)) \le d(e_G,p) + e^td(e_G,p) < 2d(e_G,p)$ for all $t< 0$.
	Therefore, if $t<0$ then 
	\[
	\delta_{e^t}(p)\in B_d(p,2d(e_G,p))\cap p G^\circ . 
	\]
	We know from Lemma~\ref{lem08161113} that $B(p,r)\cap pG^\circ$ is $\tau_G$-precompact for every $r>0$.
	Therefore, there are $t_k\to-\infty$ and $q\in p\cdot G^\circ$ such that $\delta_{e^{t_k}}(p)\overset{\tau_G}{\to} q$.
	Since $\lim_{t\to-\infty} d(e,\delta_{e^t}(p)) = \lim_{t\to-\infty} e^td(e,p) = 0$ and since $d$ is $\tau_G$-continuous on $pG^\circ$ by Corollary~\ref{cor09042136}, we obtain $d(e_G,q)=0$, i.e., $p\in G^\circ$.
\end{proof}

\subsection{Conclusion of the proof of Theorem~\ref{thm11171734}}
By Lemma \ref{lem:1231} we have that 
	$G$ is connected, i.e., $G^\circ=G$. Hence, 
	Proposition~\ref{prop08161111} and Proposition~\ref{prop08161112} give $\tau_d= \tau_G$.
	\qed

\section{Homogeneous distances}\label{sec12062042}

\subsection{Self-similar Lie groups}
\begin{definition}
	A \emph{self-similar metric Lie group} is a quadruple $(G,d,\delta,\lambda)$ where $G$ is a Lie group, $d$ an admissible left-invariant distance on $G$, $\delta\in\Aut(G)$ and $\lambda\in(0,\infty)\setminus\{1\}$ so that
	\[
	d(\delta x,\delta y) = \lambda d(x,y) ,
	\qquad\forall x,y\in G .
	\]
\end{definition}

In Section~\ref{sec12051423} we present examples of $(G,d,\delta,\lambda)$ where $d$ is not admissible or $\delta$ is not a group automorphism.
In \cite{2017arXiv170509648C} it has been given a characterization of self-similar metric Lie groups:
\begin{theorem}[Cowling et al., \cite{2017arXiv170509648C}]\label{thm12041706}
	If a metric space is locally compact, connected, isometrically homogeneous, and it admits a metric dilation, then it is isometric to self-similar metric Lie group. Moreover, all metric dilations of a self-similar metric Lie group are automorphisms.
\end{theorem}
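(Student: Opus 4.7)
The plan is to reduce the statement to two classical inputs: Berestovskii's structure theorem for isometrically homogeneous metric spaces and Siebert's theorem on contractive automorphisms (Proposition~\ref{prop12131456}). First, by Berestovskii's theorem, every locally compact, connected, isometrically homogeneous metric space $X$ is isometric to a coset space $G/H$, where $G$ is a connected Lie group acting transitively by isometries and $H$ is a compact subgroup, with the distance being $G$-invariant and inducing the manifold topology. Any metric dilation $\delta$ of factor $\lambda\neq 1$ on a complete metric space has a unique fixed point by Banach's contraction principle applied to $\delta$ or $\delta^{-1}$. Using homogeneity, I may assume this fixed point is the base point $o=eH$, and by replacing $\delta$ with $\delta^{-1}$ if needed, that $\lambda\in(0,1)$.

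Second, I would transfer $\delta$ to the isometry group. The conjugation $\Phi(g):=\delta\circ g\circ\delta^{-1}$ defines an automorphism of $\mathrm{Isom}(X)$ and hence of its identity component. Taking $G$ to be this identity component, $\Phi$ restricts to a Lie group automorphism of $G$ preserving the stabilizer $H$ of $o$. The dilation property combined with the transitivity of the $G$-action implies that $\Phi$ is contracting: for every neighborhood $U$ of $e_G$ and every compact $K\subset G$, one has $\Phi^n(K)\subset U$ for all sufficiently large $n$. Intuitively, $\delta^n$ shrinks metric balls centered at $o$ by a factor $\lambda^n\to 0$, so elements of $G$ moving $o$ by a fixed amount are conjugated to elements moving $o$ by a vanishing amount; admissibility of the distance then translates this into group-theoretic contraction.

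Third, I would apply Siebert's theorem to deduce that $G$ is simply connected and nilpotent. A simply connected nilpotent Lie group has no nontrivial compact subgroup, so $H=\{e_G\}$, and $X$ is itself the Lie group $G$ equipped with a left-invariant admissible distance. The dilation $\delta$ fixes $e_G$ and is a metric dilation of $(G,d)$. Finally, to show that $\delta$ is a Lie group automorphism, I would invoke \cite{MR3646026}: any isometry of a nilpotent simply connected metric Lie group decomposes as a Lie group automorphism composed with a left translation, and an analogous statement holds for metric dilations. The fixed-point condition $\delta(e_G)=e_G$ forces the translation to be trivial, giving $\delta\in\Aut(G)$. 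The same reasoning applied to an arbitrary metric dilation of the resulting self-similar metric Lie group yields the second part of the theorem. The main obstacle is the contraction property of $\Phi$ in the second step, which requires coupling the purely metric contraction of $\delta$ with the differentiable structure of $\mathrm{Isom}(X)$; this is typically handled via Arzel\`a--Ascoli together with a careful analysis of how $G$-orbits interact with $\delta$-invariant neighborhoods of $o$.
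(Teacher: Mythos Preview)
The paper does not contain a proof of this theorem: it is quoted from \cite{2017arXiv170509648C} and used as a black box. There is therefore no proof in the present paper to compare your attempt against.

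That said, your sketch has a genuine circularity in the second and third steps. You assert that $\Phi(g)=\delta g\delta^{-1}$ is contractive on the identity component $G$ of $\mathrm{Isom}(X)$, then apply Proposition~\ref{prop12131456} to conclude that $G$ is simply connected and nilpotent, whence the compact stabiliser $H$ is trivial. But $\Phi$ cannot be contractive on $G$ unless $H$ is already trivial: since $\delta(o)=o$, conjugation by $\delta$ preserves the stabiliser, so $\Phi^n(H)=H$ for every $n$; taking the compact set $K=H$ and any neighbourhood $U\not\supset H$ of $e_G$ gives $\Phi^n(K)\not\subset U$. The metric contraction of $\delta$ only yields that $\Phi$ contracts $G$ \emph{toward} $H$ (equivalently, that the induced map on $G/H$ is contractive), which is not the hypothesis of Proposition~\ref{prop12131456}. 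Thus the step ``apply Siebert, deduce $H=\{e_G\}$'' presupposes its own conclusion. A correct argument must produce, by other means, a closed $\Phi$-invariant subgroup of $G$ on which $\Phi$ is genuinely contractive and which still acts transitively on $X$; this is where the real work in \cite{2017arXiv170509648C} lies, and it is not recovered by the Arzel\`a--Ascoli considerations you allude to. (A secondary point: Berestovski\u{\i}'s theorem in its usual form concerns intrinsic metrics, so your first step also needs care in this generality; but that is minor compared to the gap above.)
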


After a technical lemma about quotients of self-similar metric Lie groups, we show basic properties of self-similar metric Lie groups. 

\begin{lemma}\label{lem12041918}
	Let $(G,d,\delta,\lambda)$ be a connected self-similar metric Lie group and $H\lhd G$ a closed normal subgroup with $\delta(H)=H$.
	Then there are a left-invariant distance $\hat d$ on $\hat G:=G/H$ and an automorphism $\hat\delta\in\Aut(\hat G)$ such that $(\hat G,\hat d,\hat \delta,\lambda)$ is a self-similar metric Lie group and the quotient map $(G,d)\to (\hat G,\hat d)$ is a submetry.
\end{lemma}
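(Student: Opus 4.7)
The plan is to construct $\hat d$ as the standard quotient distance and $\hat\delta$ as the induced automorphism, then verify the required properties in order. Define
\[
\hat d([x],[y]) := \inf\{d(x',y') : x'\in xH,\ y'\in yH\},
\qquad x,y\in G,
\]
and, since $\delta(H)=H$, set $\hat\delta([x]):=[\delta x]$, which is a well-defined group automorphism of $\hat G$.

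First I would show that $\hat d$ is a genuine left-invariant distance on $\hat G$. Symmetry and the triangle inequality are routine from the infimum definition together with the triangle inequality for $d$. For left-invariance, because $H$ is normal, $zH\cdot wH = (zw)H$, and left-translation by $z$ on $G$ permutes the pairs $(x',y')$ ranging over $xH\times yH$ with $(zx', zy')$ ranging over $(zx)H\times(zy)H$, while preserving $d$. The only subtle point is non-degeneracy: using normality, $\hat d([x],[y]) = \inf_{h\in H} d(e_G, x^{-1}yh)$, so $\hat d([x],[y])=0$ means $e_G$ lies in the $d$-closure of $x^{-1}yH$. Since $d$ is admissible on $G$ (by hypothesis, and in any case by Theorem~\ref{thm11171734}) and $H$ is closed, $x^{-1}yH$ is $d$-closed, forcing $x^{-1}y\in H$ and $[x]=[y]$.

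Next I would verify the submetry property, which is essentially immediate from the definition: $\hat d(\pi(x),[y])<r$ means there exist $x'\in xH$, $y'\in yH$ with $d(x',y')<r$, so $y'\in B_d(x',r)$ and $\pi(y')=[y]$; thus $\pi(B_d(x,r))=B_{\hat d}(\pi(x),r)$ after using left-invariance to move $x'$ back to $x$. Admissibility of $\hat d$ then follows from this submetry property combined with admissibility of $d$: continuity of $\pi$ (with respect to $d$ and $\hat d$) and openness of $\pi$ on $\hat d$-balls show that the $\hat d$-topology coincides with the quotient manifold topology on $\hat G = G/H$.

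Finally I would check that $\hat\delta$ is a dilation of factor $\lambda$ for $\hat d$. Using $\delta(H)=H$ to substitute $h_i=\delta(h_i')$ in the infimum,
\[
\hat d(\hat\delta[x],\hat\delta[y])
= \inf_{h_1,h_2\in H} d(\delta(x)h_1,\delta(y)h_2)
= \inf_{h_1',h_2'\in H} d(\delta(xh_1'),\delta(yh_2'))
= \lambda\,\hat d([x],[y]),
\]
so $(\hat G,\hat d,\hat\delta,\lambda)$ is a self-similar metric Lie group. The main technical point I expect to dwell on is the admissibility of $\hat d$, i.e., ruling out that the quotient distance could fail to induce the manifold topology; all other steps are short manipulations with infima and normality of $H$.
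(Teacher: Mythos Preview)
Your proposal is correct and follows essentially the same approach as the paper: define $\hat d$ as the coset-infimum distance and $\hat\delta$ as the induced automorphism, then verify left-invariance, the triangle inequality, non-degeneracy, the submetry property, admissibility, and the dilation identity. The only notable variation is in the non-degeneracy step: the paper first uses compactness of balls in $(G,d)$ to upgrade the infimum to a minimum $\hat d(xH,yH)=\min_{h\in H}d(xh,y)$ and reads off positivity from there, whereas you argue directly that $\hat d([x],[y])=0$ forces $e_G$ into the closed set $x^{-1}yH$. Both are valid; your route is slightly more economical since it avoids invoking properness of $d$. (One small aside: your parenthetical appeal to Theorem~\ref{thm11171734} is off target, since that result concerns $A$-homogeneous distances rather than single dilations---but admissibility is part of the definition of a self-similar metric Lie group anyway, so this is harmless.)
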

\begin{proof}
	Since $\delta(H)=H$, then there is $\hat\delta\in\Aut(\hat G)$ with $\hat\delta(xH)=(\delta x)H$ for all $x\in G$.
	Define $\hat d:\hat G\times\hat G\to[0,+\infty)$ as 
	\[
	\hat d(xH,yH) := \inf\{d(xh,yk):h,k\in H\} .
	\]
	The function $\hat d$ is clearly symmetric, $G$-invariant and $\hat\delta$ rescales it by $\lambda$.
	Using the facts that $d$ is left-invariant, $H$ is a closed normal subgroup, and balls in $(G,d)$ are compact, one can show that $\hat d(xH,yH) = \min\{d(xh,y):h\in H\}$.
	It follows that $\hat d(xH,yH)=0$ if and only if $xH=yH$.
	Moreover, if $x,z\in G$, then $\hat d(xH,zH) = d(xh,z)$ for some $h\in H$;
	if $y\in G$, then $ d(xh,z)\le d(xh,yk)+d(yk,z)$ for all $k\in H$;
	taking the infimums on $k$ we obtain $$\inf_{k\in H} d(xh,yk)+d(yk,z) \le \inf_{k\in H} d(xh,yk)+ \inf_{k\in H}d(yk,z) \le \hat d(xH,yH) + \hat d(yH,zH).$$
	This shows that $\hat d$ is a distance on $\hat G$.
	Finally, since
	the quotient map $G\to\hat G$ is a submetry,
	$\hat d$ induces the manifold topology.
%
\end{proof}

\begin{theorem}[Structure of self-similar metric Lie groups]\label{teo08221641}
	Let $(G,d,\delta,\lambda)$ be a self-similar metric Lie group.
	Let $V_t:=V_t(\lambda, \delta_* )$ be 
	as in Proposition~\ref{prop08220937}. 
	The following facts hold:
	\begin{enumerate}[leftmargin=*,label=(\roman*)]
	\item\label{teo08221641item3}
	$G$ is connected, simply connected, and nilpotent;
	\item\label{teo08221641item1}
	$V_t=\{0\}$ for all $t<1$. In particular, 
	$ (V_t)_{t\geq 1} $ is a positive grading of $\g$;
	\item\label{teo08221641item4}
	 $(G,d)$ is a $Q$-Ahlfors regular metric   space with  
	\[
	Q := \sum_{t\geq 1} t\cdot\dim(V_t) .
	\]
	\end{enumerate}
	Suppose in addition that $\delta_* = \lambda^A$ for some $A\in\Der(\frk g)$.
	Then the following holds:
	\begin{enumerate}[resume*]
	\item\label{teo08221641item5}
	$E_{\lambda^\alpha}^{\delta_*} = E_{\alpha}^A$, for all  $\alpha\in\C$;
	\item\label{teo08221641item6}
	The eigenvalues of $A$ have all real part larger or equal than 1;
	\item\label{teo08221641item7}
	For $V_t$ as above and $V_t(A)$ as in Proposition~\ref{prop12061739}, we have $V_t = V_t(A)$.
	\end{enumerate}
\end{theorem}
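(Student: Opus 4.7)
The plan is to prove (i)--(iii) using only the single automorphism $\delta$, and then to derive (iv)--(vi) algebraically from the assumption $\delta_* = \lambda^A$. For (i), after replacing $\delta$ by $\delta^{-1}$ if necessary, I may assume $\lambda \in (0, 1)$. For each $x \in G$, one has $d(\delta^n x, e_G) = \lambda^n d(x, e_G) \to 0$, and since $d$ is admissible every compact set is $d$-bounded, so this convergence is uniform on compact sets. Hence $\delta$ is a contractive automorphism and Siebert's Proposition~\ref{prop12131456} delivers (i) together with $V_t = \{0\}$ for all $t \leq 0$.

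The main obstacle is to sharpen this to $V_t = \{0\}$ for all $t < 1$, which is (ii). Suppose for contradiction that $V_{t_0} \neq \{0\}$ for some $t_0 \in (0, 1)$. Since $\delta_*$ preserves every generalized eigenspace, the subspace $\mathfrak{h} := \bigoplus_{t > t_0} V_t$ is a $\delta_*$-invariant ideal of $\mathfrak{g}$, so $H := \exp(\mathfrak{h})$ is a closed normal $\delta$-invariant subgroup, and Lemma~\ref{lem12041918} produces a quotient self-similar metric Lie group $(\bar G, \bar d, \bar \delta, \lambda)$ with Lie algebra $\bar{\mathfrak{g}} = \bigoplus_{t \leq t_0} V_t$. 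In $\bar{\mathfrak{g}}$ the top layer $V_{t_0}$ is central and abelian, because $[V_s, V_{t_0}] \subset V_{s + t_0}$ vanishes in $\bar{\mathfrak{g}}$ for every $s > 0$. Consequently $\exp(V_{t_0}) \cong (V_{t_0}, +)$ is a Euclidean central subgroup, the restriction $D$ of $\bar d$ to it is a translation-invariant admissible distance, and $L := \bar\delta_*|_{V_{t_0}}$ is a metric dilation of $(V_{t_0}, D)$ of factor $\lambda$ whose eigenvalues all have modulus $\lambda^{t_0}$.

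For a nonzero $v$ lying in a real eigenline of $L$, or in a real two-dimensional $L$-invariant plane in the complex-eigenvalue case, one has $\|L^{-n} v\| = \lambda^{-t_0 n} \|v\|$ in any fixed Euclidean norm $\|\cdot\|$ on $V_{t_0}$. On the other hand, iterating the triangle inequality along the Euclidean segment from $0$ to a point yields a linear bound $D(0, y) \leq C(\|y\| + 1)$ for all $y \in V_{t_0}$, with $C$ finite by continuity of $D$. Combining this with $D(0, L^{-n} v) = \lambda^{-n} D(0, v)$ gives
\[
\lambda^{-n} D(0, v) \;\leq\; C \lambda^{-t_0 n} \|v\| + C.
\]
Dividing by $\lambda^{-n}$ yields $D(0, v) \leq C \lambda^{n(1 - t_0)} \|v\| + C \lambda^n$; letting $n \to \infty$, the right-hand side tends to $0$ because $t_0 < 1$, while the left-hand side is the positive constant $D(0, v)$. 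This contradiction establishes (ii).

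For (iii), the Haar measure $\mu$ on the simply connected nilpotent $G$ corresponds to Lebesgue measure on $\mathfrak{g}$ via $\exp$, so $\mu(\delta B) = |\det \delta_*| \mu(B) = \lambda^Q \mu(B)$ by Proposition~\ref{prop08220937} and (ii). Choosing $N$ large so that $\delta^N \overline{B(e_G, 1)}$ is contained in a compact neighborhood $U$ of $e_G$ yields $\overline{B(e_G, 1)} \subset \delta^{-N} U$, hence $\mu(B(e_G, 1)) \in (0, \infty)$ and $\mu(B(e_G, \lambda^n)) = \lambda^{nQ} \mu(B(e_G, 1))$; interpolating between consecutive radii $\lambda^{n+1}, \lambda^n$ and using left-invariance of $\mu$ and $d$ delivers $Q$-Ahlfors regularity at all scales and centers. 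Finally, (iv)--(vi) are algebraic: (iv) is Lemma~\ref{lem08241611} applied to $B := (\log \lambda) A$, since $\delta_* = e^B$ and $E^A_\alpha = E^B_{(\log \lambda) \alpha}$; (v) then follows from (ii) because $\beta = \lambda^\alpha$ with $|\beta| \leq \lambda$ forces $\mathrm{Re}(\alpha) \geq 1$; and (vi) is the identification $\{\beta \in \mathbb{C} : |\beta| = \lambda^t\} = \{\lambda^{t + is} : s \in \mathbb{R}\}$ combined with (iv).
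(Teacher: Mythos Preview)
Your argument is correct, and items (i), (iii), (iv)--(vi) follow the paper's route closely. Two small imprecisions: first, Proposition~\ref{prop12131456} assumes $G$ connected, which you have not established before invoking it; however, this follows at once from your contraction argument, since $\delta^n x\to e_G$ forces $\delta^n x\in G^\circ$ for large $n$ and hence $x\in\delta^{-n}G^\circ=G^\circ$. Second, the exact equality $\|L^{-n}v\|=\lambda^{-t_0 n}\|v\|$ fails for an \emph{arbitrary} Euclidean norm when the eigenvalue is complex (a rotation need not be a $\|\cdot\|$-isometry); only $\|L^{-n}v\|\le C'\lambda^{-t_0 n}\|v\|$ holds, but that is all you use.

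The proof of (ii) is where you genuinely diverge from the paper. The paper first establishes $Q$-Ahlfors regularity with $Q=\sum_{t>0}t\dim V_t$ (not yet knowing the layers start at $t\ge1$), then passes to the quotient $\hat G$ with Lie algebra $V_{t_m}$, where $t_m=\min\{t:V_t\neq0\}$, and invokes the dimension-theoretic inequality
\[
\dim V_{t_m}=\dim_{\mathrm{top}}\hat G\le\dim_{\mathrm{Haus}}(\hat G,\hat d)=t_m\dim V_{t_m}
\]
to force $t_m\ge1$. Your approach sidesteps Hausdorff dimension entirely: after the analogous quotient you restrict to the abelian subgroup $\exp(V_{t_0})$ and compare the $d$-growth rate $\lambda^{-n}$ of $L^{-n}v$ directly against its Euclidean growth rate $\lambda^{-t_0 n}$ via the elementary bound $D(0,y)\le C(\|y\|+1)$, which holds by continuity of the admissible distance. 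This is a hands-on reproof of ``Hausdorff dimension $\ge$ topological dimension'' in this special setting; it is more elementary and makes (ii) independent of (iii), whereas the paper's ordering is the reverse. The paper's route is slicker and yields (iii) as a reusable tool, while yours avoids the Ahlfors-regularity machinery altogether.
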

\begin{proof}
	Since $G$ is locally connected and $d$ is admissible, there are a connected neighborhood $U$ of $e_G$ and a radius $r>0$ such that $B_d(e_G,r)\subset U$.
	Since $ B_d(e_g,\lambda^nr)= \delta^nB_d(e_G,r)\subset \delta^nU $ for all $n\in\Z$ and since $G=\bigcup_{n\in\Z}B_d(e_g,\lambda^nr)$, 
	then $G=\bigcup_{n\in\Z}\delta^nU$.
	Since $e_G\in\bigcap_{n\in\Z}\delta^nU$, then $G$ is connected.
	
	Since $d$ is admissible and $\lambda\neq1$,
	either $\delta$ or $\delta^{-1}$ is a contractive automorphism of $G$.
	More precisely,
	if $\lambda<1$, then $\delta$ is contractive; 
	if $\lambda>1$, then $\delta^{-1}$ is contractive and $V_t(\lambda,\delta_*) = V_t(1/\lambda,\delta_*^{-1})$.
	Using Proposition~\ref{prop12131456}, we've got in both cases that $V_t=\{0\}$ for all $t\le0$, and that $G$ is simply connected and nilpotent.
	Item~\ref{teo08221641item3} is thus proven.
	
	 	
	Set $Q:= \sum_{t>0} t\cdot\dim(V_t)$. Notice that we have not proved yet that 
	the elements in the last sum  are zero for $t<1$. 
	If $\mu$ is a Haar measure on $G$, then, for all $n\in\Z$,
	\[
	\mu(B(e_G,\lambda^n)) 
	= \mu(\delta^n(B(e_G,1)))
	= |\det\delta_*|^n\cdot\mu(B(e_G,1))
	= \lambda^{nQ} \cdot \mu(B(e_G,1)) ,
	\]
	using Proposition~\ref{prop08220937} in the last identity.
	It follows that $(G,d)$ is Ahlfors regular with Hausdorff dimension~$Q$, see~\cite[\S8.7]{MR1800917}.
	Therefore, the point~\ref{teo08221641item4} is proven (without item~\ref{teo08221641item1}).
	
	We want to show that $t_m:=\min\{t\in\R:V_t\neq\{0\}\}$ is larger or equal than 1.
	Define $\frk h=\bigoplus_{t>t_m}V_t$.
	We claim that $[\frk g,\frk g]\subset\frk h$.
	Indeed, if $X\in V_{t}$ and $Y\in V_s$ with $s,t \geq t_m$, then $[X,Y]\in V_{t+s}$, since $t+s\geq 2 t_m>t_m$, because $t_m>0$.
	
	Let $H=\exp(\frk h)<G$ be the connected Lie subgroup associated to $\frk h$ and define $\hat G=G/H$.
	Notice that $H$ is a closed normal subgroup and that $\delta(H)=H$, because $G$ is simply connected and nilpotent, $[\frk g,\frk h]\subset[\frk g,\frk g]\subset\frk h$ and $\delta_*(\frk h)=\frk h$.
	By Lemma~\ref{lem12041918}, there is a distance $\hat d$ on $\hat G$ so that the quotient map $(G,d)\mapsto(\hat G,\hat d)$ is a submetry and  $(\hat G,\hat d,\hat \delta,\lambda)$ is a self-similar metric Lie group.
	By the item~\ref{teo08221641item4}, which we proved above, we have
	\[
	\dim(V_{t_m}) =
	\dim_{\rm {top}}\hat G  
	\le \dim_{\rm {Haus}}(\hat G,\hat d)
	= t_m \dim(V_{t_m}),
	\]
	and therefore $t_m\ge1$.
	This completes the proofs of item  \ref{teo08221641item1}. 
	
	We  consider now the last items of the theorem: assume that $\delta_*=\lambda^A$, for some $A\in\Der(\frk g)$. 
	Notice that for all $\beta,k\in\C$ we have $E^{kA}_{k\beta} = E^A_{ \beta}$, because $(kA-k \beta\Id)^n = k^n(A- \beta\Id)^n$.
	Therefore, using Lemma~\ref{lem08241611},  
	 we have 
	$E^{\delta_*}_{\lambda^\alpha} 
	= E^{(\log\lambda)A}_{(\log \lambda)  \alpha}
	= E_{\alpha}^A$.
	Items~\ref{teo08221641item5},~\ref{teo08221641item6} and~\ref{teo08221641item7} readily follow.
\end{proof}

\subsection{Homothetic self-similar metric Lie groups}
A distance $d$ on a Lie group $G$ is said \emph{homothetic} if it is left-invariant, admissible and it admits a metric dilation of factor~$\lambda$ for every $\lambda>0$.

Any $A$-homogeneous distance on $G$ is clearly homothetic.
Since homothetic distances are assumed to be admissible, from Theorems~\ref{thm12041706} and~\ref{teo08221641} we obtain that, up to possibly changing the group structure, we may assume $G$ to be nilpotent.
In this case, one easily shows that homothetic distances are $A$-homogeneous, for some derivation $A$:
\begin{proposition}\label{prop11071818}
	Let $d$ be a homothetic distance on a nilpotent Lie group $G$.
	Denote by $P$ the group of dilations of $(G,d)$ and by $I$ the subgroup of $P$ consisting of isometries, i.e., dilations of factor 1.
	
	Then $P=I\rtimes\R_{>0}$ and it is a closed subgroup of $G\rtimes\Aut(G)$.
	In particular, there is $A\in\Der(\frk g)$ so that $d$ is an $A$-homogeneous distance.
\end{proposition}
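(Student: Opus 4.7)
The plan is to use the Kivioja--Le Donne description of metric dilations of nilpotent metric Lie groups (recalled just before the proposition) to embed $P$ into $G\rtimes\Aut(G)$, verify closedness so that this gives $P$ a Lie group structure, and extract from the dilation-factor homomorphism a one-parameter subgroup of dilating \emph{automorphisms} of $G$. Concretely, each $\delta\in P$ factors uniquely as $\delta=L_p\circ\phi$ with $p\in G$ and $\phi\in\Aut(G)$, yielding an injective group homomorphism $P\hookrightarrow G\rtimes\Aut(G)$. Left-invariance of $d$ gives
\[
d(\phi x,\phi y)=d(L_p^{-1}\delta x,L_p^{-1}\delta y)=d(\delta x,\delta y)=\lambda\, d(x,y)
\]
for all $x,y\in G$, so the automorphism factor $\phi$ is itself a dilation of the same factor $\lambda$. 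Setting $P^{\mathrm{aut}}:=P\cap\Aut(G)$ and letting $\pi:P\to\R_{>0}$ denote the continuous group homomorphism recording the dilation factor, the previous computation shows that already $\pi|_{P^{\mathrm{aut}}}$ is surjective, because $d$ is homothetic.

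Next, I argue that $P^{\mathrm{aut}}$ is closed in $\Aut(G)$. Suppose $\phi_n\in P^{\mathrm{aut}}$ has factor $\lambda_n$ and $\phi_n\to\phi$ in $\Aut(G)$. Fix $x,y\in G$ with $d(x,y)>0$; by admissibility of $d$ (so that $d$ is continuous on $G\times G$), $\lambda_n=d(\phi_n x,\phi_n y)/d(x,y)$ converges to $\lambda:=d(\phi x,\phi y)/d(x,y)$, and passing to the limit in $d(\phi_n u,\phi_n v)=\lambda_n\, d(u,v)$ for arbitrary $u,v\in G$ shows that $\phi$ is a dilation of factor $\lambda$, hence $\phi\in P^{\mathrm{aut}}$. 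By Cartan's closed subgroup theorem, $P^{\mathrm{aut}}$ is a Lie subgroup of $\Aut(G)$; the same argument applied to $\delta_n=L_{p_n}\phi_n\in P$ convergent in $G\rtimes\Aut(G)$ shows that $P$ is closed there, as claimed.

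Since $\pi|_{P^{\mathrm{aut}}}:P^{\mathrm{aut}}\to\R_{>0}$ is a continuous surjective homomorphism of Lie groups onto a one-dimensional target, its differential is a non-zero linear map to $\R$ and therefore surjective. Choose $X$ in the Lie algebra of $P^{\mathrm{aut}}$ with $d\pi(X)=1$ and set $\delta_\lambda:=\exp(\log(\lambda)\, X)\in P^{\mathrm{aut}}$ for $\lambda>0$. The map $\lambda\mapsto\delta_\lambda$ is a multiplicative one-parameter group of Lie automorphisms of $G$ satisfying $d(\delta_\lambda x,\delta_\lambda y)=\lambda\, d(x,y)$, and its infinitesimal generator $A\in\Der(\frk g)$, determined by $(\delta_\lambda)_*=\lambda^A$, witnesses that $d$ is $A$-homogeneous. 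Moreover, the subgroup $\{\delta_\lambda\}_{\lambda>0}\subset P$ meets $I=\ker\pi$ only at the identity and projects isomorphically onto $\R_{>0}$, so $P=I\rtimes\{\delta_\lambda\}\cong I\rtimes\R_{>0}$, completing the proof.

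The main technical point is the closedness step, which rests on admissibility of $d$ to ensure that dilation factors vary continuously along convergent sequences of dilating automorphisms. Once this is in place the remaining steps are soft: Cartan's theorem promotes closedness to a Lie group structure, and any one-parameter subgroup of $P^{\mathrm{aut}}$ transverse to $\ker\pi$ simultaneously delivers the semidirect decomposition and the $A$-homogeneity.
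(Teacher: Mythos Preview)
Your proof is correct and follows essentially the same route as the paper's: both embed $P$ into $G\rtimes\Aut(G)$ via the Kivioja--Le Donne theorem, verify closedness by passing dilation factors to the limit, observe that the factor map remains surjective on $P\cap\Aut(G)$, and then lift a one-parameter subgroup along this map to obtain the derivation $A$ and the semidirect decomposition. The only cosmetic difference is that you treat $P^{\mathrm{aut}}$ separately before $P$, whereas the paper goes straight to $P$; both arguments are equally brief at the point where surjectivity of the differential of $\pi$ is invoked.
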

\begin{proof}
	Notice that a dilation $(G,d)\to(G,d)$ of factor $\lambda$ is  an isometry $(G,\lambda d)\to(G,d)$.
	Since isometries of nilpotent Lie groups are affine maps, see \cite{MR3646026}, then $P\subset G\rtimes\Aut(G)$.
	
	Now, we claim that in fact $P$ is a closed subgroup: indeed, if $\{\delta_n\}_{n\in\N}$ is a sequence of dilations, each of factor $\lambda_n>0$, converging to $\delta$ in $G\ltimes\Aut(G)$, then it converges pointwise.
	Therefore, if $x,y\in G$ are such that $d(x,y)\neq0$, then
	\[
	\lim_{n\to\infty} \lambda_n 
	= \lim_{n\to\infty} \frac{d(\delta_n x,\delta_n y)}{d(x,y)} 
	= \frac{d(\delta x,\delta y)}{d(x,y)} .
	\]
	Since $\delta$ is still bijective, it being in $G\ltimes\Aut(G)$, then $\lambda:=\frac{d(\delta x,\delta y)}{d(x,y)}>0$.
	Since the limit $\lim_{n} \lambda_n$ does not depend on the choice of $x$ and $y$, $\delta$ is a dilation of factor $\lambda$.
	
	Since $P$ is a closed subgroup of $G\ltimes\Aut(G)$, then it is a Lie group.
	Notice that the map $f:P\to\R_{>0}$ that associates to each dilation its dilation factor is a continuous surjective group morphism.
	Notice that if $\delta$ is a nontrivial dilation of $(G,d)$, then $(\delta e_G)^{-1}\delta$ is a  dilation fixing $e_G$, thus an automorphism of $G$, and with the same factor as $\delta$.
	Therefore, the restriction $f:P\cap\Aut(G)\to \R_{>0}$ is still surjective.
	In particular, there is a one-parameter subgroup $S\subset P\cap\Aut(G)$ such that the restriction $f|_S:S\to \R_{>0}$ is an isomorphism.
	Since $I=\ker(f)$, then $I$ is a closed normal subgroup of $P$ and $P=I\rtimes S$.
	Finally, $d$ is $A$-homogeneous for some infinitesimal generator $A\in\Der(\frk g)$ of $S$.
\end{proof}

Notice that, if there exists an admissible $A$-homogeneous distance on $G$, then Theorem~\ref{teo08221641} applies.
In particular, $G$ is a connected, simply connected, nilpotent Lie group.
In the rest of this section we will only prove some technical results that we will need later in Sections~\ref{sec12061717} and \ref{sec12061813}.

\begin{lemma}\label{lem10011902}
	Suppose that $G$ is a connected, simply connected, nilpotent Lie group with Lie algebra $\frk g$.
	Let $A\in\Der(\frk g)$. 
	A set $B\subset G$ is the closed unit ball of an $A$-homogeneous distance if and only if
	\begin{enumerate}[label=(\roman*)]
	\item\label{lem10011902item1}
	$e_G\in\rm{int}(B)$ and $B$ is compact;
	\item\label{lem10011902item2}
	$B^{-1}=B$;
	\item\label{lem10011902item3}
	$B$ is $A$-convex, i.e., for all $x,y\in B$ and all $\lambda\in[0,1]$
	\[
	(\lambda^A x)\,((1-\lambda)^Ay) \in B ,
	\]
	where we use the convention $0^A\equiv e_G$.
	\end{enumerate}
\end{lemma}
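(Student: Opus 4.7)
The plan is to reduce the equivalence, in both directions, to properties of the Minkowski-type gauge
\[
\|x\| := \inf\{\lambda > 0 : x \in \lambda^A B\},
\]
and then to set $d(x,y):=\|x^{-1}y\|$. For $(\Rightarrow)$, take $B := \overline{B}_d(e_G,1)$. By Theorem~\ref{thm11171734} $d$ is admissible, so small closed balls are compact by local compactness of $G$; writing $B = \delta_{1/\varepsilon}\overline{B}_d(e_G,\varepsilon)$ for $\varepsilon>0$ small then yields both the compactness of $B$ and $e_G \in \mathrm{int}(B)$. Property (ii) amounts to $d(e_G, x^{-1}) = d(x, e_G)$. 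Property (iii) follows from the triangle inequality and homogeneity:
\[
d\bigl(e_G, (\lambda^A x)((1-\lambda)^A y)\bigr) \leq \lambda\, d(e_G, x) + (1-\lambda)\, d(e_G, y) \leq 1.
\]

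For $(\Leftarrow)$, I extract two consequences of (iii). Setting $y = e_G$ (and using $(1-\lambda)^A e_G = e_G$) gives the star-shaped property $t^A B \subset B$ for $t \in [0,1]$, so $\{\lambda : x\in\lambda^A B\}$ is a half-line $[\|x\|,\infty)$ whenever non-empty, and the infimum defining $\|x\|$ is attained by closedness of $B$. Setting $\lambda = 1/2$ and using that $(1/2)^A$ is a Lie group automorphism gives $B\cdot B \subset 2^A B$, hence by induction $B^{2^n} \subset 2^{nA} B$; combined with $G = \bigcup_n B^n$ (valid since $B$ is a symmetric neighbourhood of $e_G$ in the connected group $G$), this yields $\|x\| < \infty$ for every $x$. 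Homogeneity, symmetry (from (ii) together with $(\lambda^A x)^{-1} = \lambda^A(x^{-1})$), and subadditivity are then routine: for the last, write $x = \|x\|^A u$, $y = \|y\|^A v$ with $u,v \in B$ and apply (iii) with $\lambda := \|x\|/(\|x\|+\|y\|)$ to get $xy \in (\|x\|+\|y\|)^A B$. Consequently $d(x,y) := \|x^{-1}y\|$ is a left-invariant $A$-homogeneous pseudo-distance whose closed unit ball is $B$.

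The main obstacle is positive definiteness, $\|x\| = 0 \Rightarrow x = e_G$. I plan to resolve it by first proving from (i)--(iii) that every eigenvalue $\alpha$ of $A$ satisfies $\Re\alpha > 0$. Assume by contradiction that $\Re\alpha \leq 0$ for some $\alpha$, choose a non-zero real vector $v$ in the associated generalised eigenspace, and let $t > 0$ be small enough that $x := \exp(tv) \in B$. Since the one-parameter subgroup $\{\exp(sv):s\in\R\}$ is abelian, $x^{2^n} = \exp(2^n t v)$, and the iteration $B^{2^n} \subset 2^{nA} B$ forces $\exp\bigl(2^n t \cdot 2^{-nA} v\bigr) \in B$ for every $n\in\N$. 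Because $\Re\alpha \leq 0$, the norm of $2^{-nA} v$ in $\frk g$ is bounded below by a positive constant independent of $n$, so the norm of $2^n t \cdot 2^{-nA} v$ tends to infinity, contradicting compactness of $\exp^{-1}(B) \subset \frk g$ (well-defined because $G$ is simply connected and nilpotent). Once $\Re\alpha > 0$ is established for every eigenvalue, $\lambda^{-A}$ expands every non-zero element of $\frk g$ as $\lambda \to 0^+$, so the orbit $\{\lambda^{-A} X : \lambda > 0\}$ of $X := \exp^{-1}(x)$ can be contained in the compact set $\exp^{-1}(B)$ only when $X = 0$, i.e.\ only when $x = e_G$.
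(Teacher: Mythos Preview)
Your proof is correct and follows the same gauge-function strategy as the paper: define $N(p)=\inf\{\lambda>0:\lambda^{-A}p\in B\}$, set $d(p,q)=N(p^{-1}q)$, and verify the axioms. Your treatments of finiteness and subadditivity essentially coincide with the paper's; your doubling step $B\cdot B\subset 2^AB$ is the $n=2$ instance of the multi-term $A$-convexity~\eqref{eq12051028} that the paper proves by induction.

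The one step you handle differently is positive definiteness. The paper shows that $N(p)=0$ forces $p^n\in B$ for every $n\in\Z$ (via~\eqref{eq12051028} and $B^{-1}=B$), so $\overline{\{p^n:n\in\Z\}}$ is a compact subgroup of $G$ and therefore trivial. You instead first extract, directly from (i)--(iii), that every eigenvalue of $A$ has strictly positive real part, and then conclude by letting $\lambda^{-A}$ expand $\exp^{-1}(x)$ out of the compact set $\exp^{-1}(B)$. Both routes invoke the simply-connected-nilpotent hypothesis, but through different facts: the paper via ``no nontrivial compact subgroups'', you via ``$\exp$ is a global diffeomorphism''. Your route has the pleasant side effect of deriving $\Re\sigma(A)>0$ from the ball axioms alone; the paper obtains this separately in Proposition~\ref{prop08160956} under the hypothesis that an $A$-homogeneous distance already exists. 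One phrasing to tighten: for complex $\alpha$ you should take $v$ in the real space $(E^A_\alpha\oplus E^A_{\bar\alpha})\cap\frk g$ rather than in $E^A_\alpha$ itself; your lower bound $\inf_n\|2^{-nA}v\|>0$ then holds for any nonzero such $v$, since $\|e^{-n\log 2\,N}w\|^2$ is a strictly positive polynomial in $n$ on each Jordan block.
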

\begin{proof}
	The fact that \ref{lem10011902item1}--\ref{lem10011902item3} follow from $B$ being the unit ball of an $A$-homogeneous distance is straightforward.
	We shall prove the converse implication.
	
	Define $d(p,q):=N(p^{-1}q)$, with $N(p) := \inf\{\mu > 0 \,:\, \mu^{-A} p \in B\}$.
	We shall prove that $d$ is an $A$-homogeneous distance and $B = \{p\, :\, d(e_G,p) \le 1\}$.
	
	Clearly $d\ge0$, $d$ is symmetric and left-invariant, and $d(\lambda^Ax,\lambda^Ay)=\lambda d(x,y)$.
	By the continuity of the action $\lambda\mapsto\lambda^A$ and by the compactness of $B$, we have $B = \{p\, :\, d(e_G,p) \le 1\}$.
	
	From \ref{lem10011902item3} and the facts that $e_G\in B$ and $\lambda^Ae_G=e_G$, we have:
	\begin{equation}\label{eq12051028}
	\begin{array}{c}
	\text{for all $n\in\N$, $x_1,\dots,x_n\in B$ and $\lambda_1,\dots,\lambda_n\in[0,1]$ with $\sum_j\lambda_j\le1$,}\\
	(\lambda_1^Ax_1)\cdots (\lambda_n^Ax_n) \in B .
	\end{array}
	\end{equation}
	The proof of~\eqref{eq12051028} proceeds by induction on $n$.
	If $n=1$, then it follows from \ref{lem10011902item3} with $y=e_G$.
	If \eqref{eq12051028} holds up to $n$, one can prove it for $n+1$ using the fact that
	\begin{multline*}
	(\lambda_1^Ax_1)\cdots (\lambda_{n-1}^Ax_{n-1}) (\lambda_n^Ax_n)(\lambda_{n+1}^Ax_{n+1}) \\
	= (\lambda_1^Ax_1)\cdots (\lambda_{n-1}^Ax_{n-1})(\lambda_n+\lambda_{n+1})^A \left[\left(\left(\frac{\lambda_n}{\lambda_n+\lambda_{n+1}}\right)^Ax_n\right)\left(\left(\frac{\lambda_{n+1}}{\lambda_n+\lambda_{n+1}}\right)^Ax_{n+1}\right)\right] .
	\end{multline*}
	
	We claim that $d<\infty$, that is, for every $p\in G$ there is $\lambda>0$ such that $\lambda^Ap\in B$.
	Fix $p\in G$.
	Since $B$ is an open neighborhood of $e_G$ and $G$ is connected, then there is $n\in\N$ and $x_1,\dots,x_n\in B$ such that $p=x_1\cdots x_n$.
	By \eqref{eq12051028}, we have $(1/n)^Ap=(1/n)^Ax_1\cdots (1/n)^Ax_n\in B$.
	The claim is proven.
	
	We claim that $N(p)=0$ implies $p=e_G$.
	Indeed, if $N(p)=0$ then there is a sequence $\mu_n\to0$ with $\left(\frac{1}{\mu_n}\right)^{A}p\in B$.
	We can suppose that $\mu_n\le 1/n$.
	Therefore, from~\eqref{eq12051028} we deduce that, for all $n\in\N$,
	\[
	p^n = \left(\mu_n^A(\mu_n^{-A}p)\right)^n \in B .
	\]
	Similarly, since $B=B^{-1}$, then $N(p^{-1})=0$ and so $p^{-n}\in B$ for all $n\in\N$.
	It follows that the closed group $\overline{\{p^{n}: n\in\Z\}}$ is contained in $B$ and thus is a compact subgroup of $G$.
	Since $G$ is simply connected and nilpotent, the only compact subgroup is $\{e_G\}$ and thus $p=e_G$.
	
	The triangle inequality $N(xy)\le N(x)+N(y)$ follows from the $A$-convexity of~$B$: 
	If we set $a=N(x)$ and $b=N(y)$ and they are both nonzero, then $A$-convexity of $B$ implies
	\[
	\frac{N(xy)}{a+b} 
	= N\left( \left(\frac{a}{a+b}\right)^{A} a^{-A}x\, *\, \left(\frac{b}{a+b}\right)^{A} b^{-A}y \right)
	\le 1 .
	\]
	We conclude that $d$ is an $A$-homogeneous distance on $G$.
\end{proof}

For the proof of the following lemma, see \cite{2017arXiv170509648C}\footnote{In the first arXiv version of \cite{2017arXiv170509648C} it was  Lemma~3.3.}.
\begin{lemma}\label{lem10290953}
	Let $d$ be an admissible distance of a Lie group $G$ and $\scr K\subset\Aut(G)$ a compact group of automorphisms.
	Then the distance
	\[
	d'(x,y) := \max\{d(kx,ky):k\in\scr K\}
	\]
	is an admissible distance on $G$ and it is $\scr K$-invariant.
	Moreover, if $\delta$ is a metric dilation of factor $\lambda$ for $d$ that commutes with $\scr K$, i.e., $\delta\scr K\delta^{-1}=\scr K$, then it is also a dilation of factor $\lambda$ for $d'$.
\end{lemma}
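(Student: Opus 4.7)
The plan is to verify the listed properties of $d'$ in sequence, handling admissibility as the main technical point. First, $d'$ is well-defined and finite: since $\scr K$ is compact and $d$ is continuous (by admissibility of $d$), the map $k\mapsto d(kx,ky)$ is a continuous function on the compact set $\scr K$ and attains its maximum. The fact that $d'$ is a distance (symmetry, positivity, triangle inequality) follows by applying each property of $d$ inside the maximum; left-invariance uses that each $k\in\scr K$ is a group automorphism, so $d(k(gx),k(gy)) = d(k(g)k(x),k(g)k(y)) = d(k(x),k(y))$ by left-invariance of $d$, and taking the maximum over $\scr K$ gives $d'(gx,gy)=d'(x,y)$. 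The $\scr K$-invariance of $d'$ follows because for $k_0\in\scr K$, the map $k\mapsto k k_0$ is a bijection of $\scr K$ onto itself, so
\[
d'(k_0 x,k_0 y) = \max_{k\in\scr K} d(k k_0 x,k k_0 y) = \max_{k'\in\scr K} d(k' x,k' y) = d'(x,y).
\]

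For admissibility, the easy direction uses $\Id\in\scr K$, which gives $d\le d'$ pointwise, so the identity $(G,d')\to(G,d)$ is continuous. For the reverse direction I need to show that $y\to x$ in the manifold topology implies $d'(y,x)\to 0$. The key ingredient is joint continuity of the action map $\scr K\times G\to G$, $(k,y)\mapsto ky$; this holds because $\scr K$ is a compact subgroup of the Lie group $\Aut(G)$ acting by smooth automorphisms on $G$. Composing with the jointly continuous $d:G\times G\to[0,\infty)$ gives that $(k,y)\mapsto d(ky,kx)$ is jointly continuous on $\scr K\times G$, and then compactness of $\scr K$ gives continuity of $y\mapsto\max_{k\in\scr K}d(ky,kx)$, so $d'(y,x)\to 0$.

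Finally, for the dilation property, I use the hypothesis $\delta\scr K\delta^{-1}=\scr K$, which means the map $k\mapsto \delta^{-1}k\delta$ is a bijection of $\scr K$ onto itself. Substituting $k=\delta k'\delta^{-1}$,
\[
d'(\delta x,\delta y) = \max_{k\in\scr K}d(k\delta x,k\delta y) = \max_{k'\in\scr K}d(\delta k' x,\delta k' y) = \lambda\max_{k'\in\scr K}d(k' x,k' y) = \lambda d'(x,y),
\]
where the third equality uses that $\delta$ is a $d$-dilation of factor $\lambda$.

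The only nontrivial step is the admissibility argument, where one must invoke joint continuity of the $\scr K$-action on $G$; the rest is a direct verification that each defining property of $d$ survives after taking the maximum over a group of symmetries that is closed under $\delta$-conjugation.
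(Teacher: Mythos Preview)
Your proof is correct and complete. The paper itself does not prove this lemma but simply refers the reader to \cite{2017arXiv170509648C}, so there is no in-paper argument to compare against; your direct verification---checking the metric axioms, left-invariance, $\scr K$-invariance, admissibility via joint continuity of the $\scr K$-action together with compactness of $\scr K$, and the dilation property via the substitution $k=\delta k'\delta^{-1}$---is the standard route and is exactly what one would expect the cited reference to contain.
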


\begin{lemma}\label{lem10291010}
	If $d_1$ and $d_2$ are two admissible distances on a Lie group $G$ and $\delta\in\Aut(G)$ 
	is a dilation of factor $\lambda\neq1$ for both distances,
	then the identity map $(G,d_1)\to (G,d_2)$ is biLipschitz.
\end{lemma}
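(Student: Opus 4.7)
The plan is to exploit the identity $d_i(e_G, \delta^n x) = \lambda^n d_i(e_G, x)$ for $i = 1, 2$, which makes the ratio $d_2(e_G, x)/d_1(e_G, x)$ $\delta$-invariant on $G \setminus \{e_G\}$. If I can produce a compact set $A \subset G \setminus \{e_G\}$ that meets every $\delta$-orbit, then on $A$ the continuous strictly positive functions $d_i(e_G, \cdot)$ have positive minima and finite maxima, pinching the ratio between two positive constants; $\delta$-invariance then propagates this bound to all of $G \setminus \{e_G\}$, and left-invariance of the $d_i$ (built into admissibility in this paper) promotes it to a bound on $d_2(x, y)/d_1(x, y)$ for all distinct pairs via the substitution $(x, y) \mapsto (e_G, x^{-1}y)$.

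Concretely, the steps I would carry out are: (i) replace $\delta$ with $\delta^{-1}$ if necessary so that $\lambda < 1$; (ii) observe that admissibility of $d_1$ combined with $d_1(e_G, \delta^n x) = \lambda^n d_1(e_G, x) \to 0$ makes $\delta$ contractive in the manifold topology, since the $d_1$-topology agrees with the manifold topology; (iii) fix any compact neighborhood $K$ of $e_G$ in the manifold topology and use contractivity to pick $N \in \mathbb{N}$ with $\delta^N(K) \subset K^\circ$; (iv) set $A := K \setminus \delta^N(K^\circ)$, which is compact and disjoint from $e_G$; (v) verify that for each $x \in G \setminus \{e_G\}$, the smallest integer $k$ with $\delta^{kN}x \in K$ (which exists because $\delta^{kN}x \to e_G$ as $k \to +\infty$ and $d_1(e_G, \delta^{kN}x) = \lambda^{kN} d_1(e_G, x) \to \infty$ as $k \to -\infty$, forcing $\delta^{kN}x \notin K$ for very negative $k$ since $K$ is $d_1$-bounded) satisfies $\delta^{kN}x \notin \delta^N(K)$, hence $\delta^{kN}x \in A$.

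The hard part will be steps (iii)--(iv): the fundamental-domain construction requires a $K$ that is genuinely compact in the manifold topology rather than merely a closed $d_1$-ball, since closed balls of admissible distances need not be compact. Admissibility enters twice here — it lets me pick such a manifold-compact $K$, and at the same time guarantees that $K$ is $d_1$-bounded, which is what makes the minimum index $k$ well-defined for every orbit. Once $A$ is in hand, the remaining estimates reduce to continuity on compact sets plus the dilation identity, and the left-invariance incorporated into ``admissible'' converts the bound at $e_G$ into a global biLipschitz equivalence of the two distances.
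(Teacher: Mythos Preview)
Your proof is correct, but it takes a more elaborate route than the paper's. Both arguments exploit the $\delta$-invariance of the ratio $d_2(e_G,\cdot)/d_1(e_G,\cdot)$ and reduce to bounding it on a fundamental domain for the $\delta$-action on $G\setminus\{e_G\}$. The difference is in how that domain is produced and how the bound is obtained. The paper simply takes the metric annulus $\{x:\lambda^\ell\le d_1(e_G,x)<\lambda^{\ell+1}\}$ (with $\lambda>1$), which is automatically a fundamental domain since every nonzero $d_1$-value lies in exactly one dyadic shell; then, rather than invoking compactness, it uses only the inclusion $\delta^k B_1\subset B_2$ for some $k\in\Z$ --- which follows immediately from admissibility, since the open $d_2$-unit ball is a neighborhood of $e_G$ in the $d_1$-topology --- to get $d_2(e_G,x)\le \lambda^{1-k}d_1(e_G,x)$ directly. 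Your approach instead builds a manifold-compact annulus $A=K\setminus\delta^N(K^\circ)$ and bounds the ratio via continuity on $A$. This buys you a proof that would also work for, say, arbitrary continuous positive gauges homogeneous of the same factor under $\delta$, and it correctly avoids the pitfall you flag (closed metric balls need not be compact); but in the present setting that concern is moot, because the paper's argument never needs any ball to be compact --- only open --- and so dispenses with steps (ii)--(iv) entirely.
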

\begin{proof}
	We need to show that there are $L_1,L_2>0$ such that, for all $x\in G$,
	\[
	L_1 d_1(e_G,x) \le d_2(e_G,x) \le L_2 d_1(e_G,x) .
	\]
	We will show only the second one, because then the first one follows by exchanging the roles of $d_1$ and $d_2$.
	Without loss of generality, we can assume $\lambda>1$.
	Let $B_j$ be the ball centered at $e_G$ of radius 1 with respect to $d_j$.
	Then there is $k\in\Z$ such that $\delta^kB_1\subset B_2$.
	Let $x\in G\setminus\{0\}$.
	There is $\ell\in\Z$ such that $x\in\delta^{\ell+1} B_1$ but $x\notin\delta^{\ell}B_1$, i.e., $\lambda^\ell \le
	d_1(e_G,x)
	\le \lambda^{\ell+1}$.
	Therefore,
	\[
	d_2(e_G,x)
	= \lambda^{\ell+1-k} d_2(e_G,\delta^k\delta^{-(\ell+1)}x) 
	\le \lambda^{1-k} \lambda^\ell 
	\le \lambda^{1-k} d_1(e_G,x) .\qedhere
	\]
\end{proof}

\section{Examples}\label{sec12051423}

For the first three examples, we consider $\R^2$  as Abelian Lie group.

\subsection{Some trivial examples}\label{sec12070916}
If $\alpha,\beta\ge1$, the (diagonalizable) matrix 
$A= \begin{pmatrix} \alpha & 0 \\ 0 & \beta \end{pmatrix}$ gives rise to
automorphisms $\delta_\lambda :=  \lambda^A= \begin{pmatrix} \lambda^\alpha & 0 \\ 0 & \lambda^\beta \end{pmatrix}$.
These maps are  one-parameter groups of dilating automorphisms  
  for several distances such as $d((x,y),(x',y')) = \max\{|x-x'|^{1/\alpha},|y-y'|^{1/\beta}\}$ or, if $\alpha=\beta$, $d(x,y)=\|x-y\|^{1/\alpha}$ where $\|\cdot\|$ is any norm on $\R^2$.
It has been shown in \cite[Proposition 5.1]{MR3739202} that, for $\alpha=\beta=2$, there exists an $A$-homogeneous distance $d$ in $\R^2$ whose spheres are fractals.

If $\alpha \geq 1$, the maps $$\delta_\lambda: = \lambda^\alpha \begin{pmatrix} \cos(\log\lambda) & -\sin(\log\lambda)\\ \sin(\log\lambda) & \cos(\log\lambda)\end{pmatrix}
= \exp\left(\log(\lambda) \begin{pmatrix} \alpha & -1\\ 1 & \alpha\end{pmatrix}\right)
$$ are a one-parameter group of dilating automorphisms for the distance $d(x,y)=\|x-y\|^{\frac1\alpha}$, where $\|\cdot\|$ is the Euclidean norm.

If $\alpha=1$, one can show that the only homogeneous distances are multiples of the Euclidean distance.
This is a particular instance of a more general fact, see Proposition~\ref{prop12182017}.

However, if $\alpha=2$, there are examples of pathological distances, see next example. 


\subsection{Dilations with non-real spectrum}\label{sec12271414}	
Let 
\[
A=\begin{pmatrix}2&-1\\1&2\end{pmatrix} .
\]
We claim that the set $B:=\{(x,y)\in\R^2:\|(x,y)\|_\infty\le1\}$ is the unit ball of an $A$-homogeneous distance $d$ on $\R^2$, where $\|(x,y)\|_\infty=\max\{|x|,|y|\}$. 	
By Lemma~\ref{lem10011902}, we need to show that $B$ is $A$-convex for the claim to be true.
Let $(x,y),(\bar x,\bar y)\in B$ and $t\in(0,1)$.
Then
\begin{multline*}
\|t^A(x,y) + (1-t)^A(\bar x,\bar y)\|_{\infty}\\
\!\!=\!\!
\left\|\begin{pmatrix}
t^2(\cos(\log t)x-\sin(\log t)y) + (1-t)^2 (\cos(\log(1-t))\bar x - \sin(\log(1-t))\bar y ) \\
t^2(\sin(\log t)x+\cos(\log t)y) + (1-t)^2 (\sin(\log(1-t))\bar x + \cos(\log(1-t))\bar y )
\end{pmatrix}\right\|_{\infty}\\
\le t^2(|\cos(\log(t))|+|\sin(\log(t))|) + (1-t)^2 (|\cos(\log(1-t))| + |\sin(\log(1-t))| ) .
\end{multline*}
Set $f(t)$ to be the last expression: we need to show that $f(t)\le1$ for all $t\in(0,1)$.
Since $f(t)=f(1-t)$, we only need to show that $f(t)\le1$ for $t\in[1/2,1)$.
Notice that 
\[
f(t)\le h(t) := t^2(|\cos(\log(t))|+|\sin(\log(t))|) + 2(1-t)^2 .
\]
Moreover, for $t\in[1/2,1)$, we have $\log(t)\in[-\log(2),0]\subset[-\pi/4,0]$ and thus $|\cos(\log(t))|+|\sin(\log(t))| = \cos(\log(t)) - \sin(\log(t))$.
Now, $h(t)\le1$ for $t\in[1/2,1)$ because $h(1/2)\le1$, $h(1)\le1$ and $h$ is convex.
Indeed, one can compute on the interval $[1/2,1)$
\begin{align*}
h'(t) &= (\cos((\log(t))-3\sin((\log(t)))t + 4(t-1) , \\
h''(t) &= -2\cos((\log(t)) - 4\sin((\log(t)) + 4 ,
\end{align*}
where $h''(t)\ge -2+4 >0$.
The proof is complete.

\subsection{A distance with non-diagonalizable dilations}\label{ex:non-diagonal}
It is known, see \cite[Section~6]{MR2116315} and \cite{MR3180486}, that for all
 $\alpha>1$
  the maps 
  \[
  \delta_\lambda = \begin{pmatrix} \lambda^\alpha & \lambda^\alpha\log(\lambda) \\ 0 &\lambda^\alpha \end{pmatrix}= \exp\left(\log(\lambda) \begin{pmatrix} \alpha & 1\\  0 & \alpha\end{pmatrix}\right)
  \]
  form a one-parameter group of dilating automorphisms for some admissible distance $d_\alpha$ on $\R^2$
that is invariant under translations.
Such distances have the property that their conformal dimension is not realized.  
Consequently, these distances cannot be biLipschitz equivalent to homogeneous distance with diagonalizable dilating automorphisms.

\subsection{Automorphisms without distances}    \label{ex:without_distances}

We shall now show that for no  $\lambda>0$ 
there is  an admissible translation-invariant distance $d$ on $\R^2$ such that
 \[
 d(\delta x,\delta y)=\lambda d(x,y), \qquad\forall x,y\in \R^2,
 \]
where 
  \[
  \delta:=\delta_\lambda := \begin{pmatrix} \lambda & \lambda\log(\lambda) \\ 0 &\lambda \end{pmatrix}= \exp\left(\log(\lambda) \begin{pmatrix} 1 & 1\\  0 & 1\end{pmatrix}\right)
  \]
Notice that this statement can be deduced (by snowflaking a candidate $d$) from the fact that the conformal dimension of the distances in Example~\ref{ex:non-diagonal} is not attained.
However, the argument below is elementary enough to be worth showing it.

Let $y\neq0$ be such that $d((0,0),(0,y))\le 1$. 
For all $n,m\in\N$, we have
\begin{align*}
	d\left( 0,n\lambda^{m} (m\log(\lambda) y,y) \right)
	&= d(0,n\delta_{\lambda}^m (0,y))
	\le \sum_{k=1}^n d(0,(\delta_\lambda)^m (0,y) ) \\
	&= n \lambda^m d(0,(0,y)) 
	\leq n \lambda^m .
\end{align*}
Without loss of generality, we can assume $\lambda<1$.
For each $m\in \N$  we   take $n_m:=\lfloor \lambda^{-m}\rfloor$ and
look at the points 
$p_m : = n_m\lambda^{m} (m\log(\lambda) y,y)$.
On the one hand, the sequence $(p_m)_m $ diverges to infinity. On the other hand, from the calculation above it stays in the unit ball with respect to the distance $d$.  
This contradicts the fact that closed balls with respect to $d$ are compact (see Lemma~\ref{lem10011902}).

%

\subsection{Dilations that are not continuous}
If $\phi:\R\to\R^2$ is a $\Q$-linear group isomorphism (which exists, using the Axiom of Choice, because $\R$ and $\R^2$ are vector spaces over the rationals with the same dimension), then $d(x,y) = \|\phi(x)-\phi(y)\|$ is a left-invariant distance on the set $\R$ such that, for each $q\in\Q$, the map $x\mapsto q\cdot x $ is a dilating automorphism of factor $q$, but this distance is not admissible because $(\R,d)$ is isometric (and thus homeomorphic) to the standard $\R^2$.

Notice also that this distance on $\R$ is homothetic and that all its dilations fixing $0$ are group automorphisms, but some of them are not continuous on $\R$.

\subsection{Dilations that are not group automorphisms}
Let $G$ be the Lie group given by $\R^3$ with the group operation
\[
\begin{pmatrix}a\\b\\c\end{pmatrix}
*
\begin{pmatrix}x\\y\\z\end{pmatrix}
=
\begin{pmatrix}a\\b\\c\end{pmatrix}
+
\begin{pmatrix}\cos(c) & -\sin(c) & 0 \\ \sin(c) & \cos(c) & 0 \\ 0 & 0 & 1 \end{pmatrix}
\begin{pmatrix}x\\y\\z\end{pmatrix} .
\]
The group $G$ is the universal covering space of the rototranslation group $\R^2\ltimes\bb S^1$.
It is evident that the Euclidean distance $d_E$ on $\R^3$ is a left-invariant admissible distance on $G$.
The maps $\delta_\lambda p:=\lambda p$, $\lambda>0$, form a one-parameter group of diffeomorphisms of $G$ and $\delta_\lambda$ is a dilation of factor $\lambda$ for $d_E$.
So, $d_E$ is an admissible left-invariant homothetic distance on $G$.
But the dilations $\delta_\lambda$ are not group automorphisms of $G$ and $G$ is not nilpotent.

\subsection{Self-similar Lie group that is not homothetic}
Let $\rho:[0,\infty) \to [0,\infty)$
be the function whose upper graph is the convex hull of the points
$(2^{2m}, 2^m)$ as $m\in \Z$.
Define the translation-invariant distance on $\R$ such that
$d(0,t)=\rho(t)$, as $t>0$.
Then the map $t\to 4t$ is a metric dilation of factor 2. However, this
distance is not isometric to the Euclidean distance and it does not
admit dilations of every factor.

\section{When   $A$-homogeneous distances exist}\label{sec12061717} 
This section is devoted to the proof of Theorem~\ref{thm12041046}.
We start with two lemmas that allow us to modify homogeneous distances.
We will then prove $\ref{thm12041046item1}\THEN\ref{thm12041046item2}$ in Proposition~\ref{prop10011823}, while in Proposition~\ref{prop10031604} we shall prove $\ref{thm12041046item2}\THEN\ref{thm12041046item1}$.
Finally, notice that in the conditions of Theorem~\ref{thm12041046}.\ref{thm12041046item2}, 
the presence of a positive grading implies that 
$G$ is   nilpotent.

\subsection{New homogeneous distances from old ones}

The following lemma allows us to consider only derivations with real spectrum.
Recall that by $\sigma(K)$ we denote the spectrum of an endomorphism $K$. We shall denote by $\g$ the Lie algebra of a Lie group $G$.
\begin{lemma}\label{lem10222333}
	Let $A\in\Der(\frk g)$ and $d$ be an $A$-homogeneous distance on $G$.
	Let $K\in\Der(\frk g)$ be such that $\sigma(K)\subset i\R$, $K$ is diagonalizable over $\C$, and $[A,K]=0$.
	Then there is a distance $d'$ that is $(A+K)$-homogeneous, $\lambda^K$-invariant, and biLipschitz equivalent to $d$.
\end{lemma}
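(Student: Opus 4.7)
The plan is to construct $d'$ by symmetrizing $d$ with respect to the compact closure of the one-parameter subgroup $\{\lambda^K:\lambda>0\}$, using Lemma~\ref{lem10290953}, and then to read off $(A+K)$-homogeneity from the fact that the symmetrization turns every $\lambda^K$ into an isometry while still commuting with $\lambda^A$.

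First I would form $\scr K:=\overline{\{\lambda^K:\lambda>0\}}\subset\Aut(G)$. Since $K$ is diagonalizable over $\C$ with $\sigma(K)\subset i\R$, after a real change of basis $K$ is block diagonal with blocks of the form $\begin{pmatrix}0&-\beta\\\beta&0\end{pmatrix}$ (and possibly a zero block), so $\lambda^K=e^{(\log\lambda)K}$ is a block rotation for every $\lambda>0$, giving the uniform operator-norm bound needed for $\scr K$ to be relatively compact in $\Aut(\frk g)$. The algebra automorphisms in $\scr K$ integrate to Lie group automorphisms of $G$ in the setting where $A$-homogeneous distances exist (cf.\ Theorem~\ref{thm11171734} and the discussion around Theorem~\ref{thm12041046}), so $\scr K$ is realized as a compact subgroup of $\Aut(G)$.

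Next I would apply Lemma~\ref{lem10290953} with this $\scr K$ to obtain the admissible, $\scr K$-invariant distance
\[
d'(x,y):=\max\{d(kx,ky):k\in\scr K\}.
\]
For every $\lambda_0>0$ the hypothesis $[A,K]=0$ yields $\lambda_0^A\,\mu^K=\mu^K\,\lambda_0^A$ for all $\mu>0$, and by continuity this gives $\lambda_0^A\,\scr K\,(\lambda_0^A)^{-1}=\scr K$. Hence Lemma~\ref{lem10290953} ensures that $\lambda_0^A$ is still a dilation of factor $\lambda_0$ for $d'$; running this for every $\lambda_0>0$ shows that $d'$ remains $A$-homogeneous. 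The $\lambda^K$-invariance of $d'$ is nothing but $\scr K$-invariance, and, using $\lambda^{A+K}=\lambda^A\cdot\lambda^K$ (from $[A,K]=0$), the short computation
\[
d'(\lambda^{A+K}x,\lambda^{A+K}y)=d'(\lambda^A\lambda^Kx,\lambda^A\lambda^Ky)=\lambda\,d'(\lambda^Kx,\lambda^Ky)=\lambda\,d'(x,y)
\]
delivers $(A+K)$-homogeneity. Finally, because $d$ and $d'$ are both admissible distances on $G$ sharing a common nontrivial dilation $\lambda_0^A$ (for any fixed $\lambda_0\neq 1$) of the same factor, Lemma~\ref{lem10291010} forces the identity $(G,d)\to(G,d')$ to be biLipschitz.

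The only nontrivial step is the first one: checking that $\scr K$ is compact and that its members genuinely live in $\Aut(G)$; once this is established the rest is a direct application of Lemmas~\ref{lem10290953} and~\ref{lem10291010}, with the key algebraic input being the commutation $[A,K]=0$ that allows $A$-dilations and $K$-isometries to combine into $(A+K)$-dilations.
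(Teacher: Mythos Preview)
Your proof is correct and follows essentially the same route as the paper's: form the compact group $\scr K=\overline{\{\lambda^K\}_{\lambda>0}}$, apply Lemma~\ref{lem10290953} to get a $\scr K$-invariant distance for which each $\lambda^A$ is still a dilation, then combine $\lambda^{A+K}=\lambda^A\lambda^K$ with Lemma~\ref{lem10291010}. If anything, you are slightly more careful than the paper in justifying compactness of $\scr K$ and in noting that the elements of $\scr K$ lift to $\Aut(G)$ (which is automatic here since the existence of an $A$-homogeneous distance forces $G$ to be connected and simply connected).
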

\begin{proof}
	Since $\sigma(K)\subset i\R$ and $K$ is diagonalizable, then $\scr K:= \overline{\{\lambda^K\}_{\lambda>0}}$ is a compact subgroup of $\Aut(\frk g)$.
	Since $[A,K]=0$, then  $\lambda^A\mu^K=\mu^K\lambda^A$, for all $\lambda,\mu>0$.
	Therefore, by Lemma~\ref{lem10290953}, there is a distance $d'$ on $G$ that is both $A$-homogeneous and $\scr K$-invariant.
	
	From $[A,K]=0$, we also get $\lambda^{A+K}=\lambda^A\lambda^K$.
	Since $\lambda^{A+K}$ is the composition of a dilation with an isometry of $d'$, then $d'$ is also $(A+K)$-homogeneous.
	Since both $d$ and $d'$ share a nontrivial dilation, then the identity $(G,d)\to (G,d')$ is biLipschitz by Lemma~\ref{lem10291010}
\end{proof}
The following lemma is a variation of Lemma~\ref{lem12041918}. The proof is left to the reader.

\begin{lemma}\label{lem07251552}
	Let $A\in\Der(\frk g)$.
	If there is an $A$-homogeneous distance on $G$ and if $\frk h\lhd\frk g$ is an ideal with $A(\frk h)\subset\frk h$, then there is an $\hat A$-homogeneous distance on $G/H$, where $\hat A\in\Der(\frk g/\frk h)$ is induced by $A$ and $H=\exp(\frk h)$.
\end{lemma}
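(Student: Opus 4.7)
The plan is to mirror the construction used in the proof of Lemma~\ref{lem12041918}, replacing the single dilation $\delta$ there with the one-parameter group $\lambda\mapsto\delta_\lambda$ associated with $A$. The construction will consist of pushing the distance $d$ down to $G/H$, and the main point to verify is that the one-parameter group $\delta_\lambda$ descends compatibly to a one-parameter group with infinitesimal generator $\hat A$.

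First, I would collect the structural facts needed to make the quotient well-behaved. By Theorem~\ref{thm11171734}, the distance $d$ is admissible, so $G$ is connected. Applying Theorem~\ref{teo08221641} to any nontrivial $\delta_\lambda$ (as a self-similar dilation of $(G,d)$) yields that $G$ is connected, simply connected, and nilpotent. Since $G$ is simply connected nilpotent, $\exp\colon\frk g\to G$ is a diffeomorphism, so $H=\exp(\frk h)$ is a closed Lie subgroup; it is normal because $\frk h\lhd\frk g$. Because $A(\frk h)\subset \frk h$, the series $\lambda^A=\sum_{k\ge 0}\frac{(\log\lambda)^k}{k!}A^k$ preserves $\frk h$ for every $\lambda>0$, hence $\delta_\lambda(H)=H$ for every $\lambda>0$. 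The derivation $A$ thus descends to a well-defined derivation $\hat A\in\Der(\frk g/\frk h)$, and by functoriality of $\exp$ each $\delta_\lambda$ descends to an automorphism $\hat\delta_\lambda\in\Aut(G/H)$ with $(\hat\delta_\lambda)_*=\lambda^{\hat A}$.

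Next, I would set
\[
\hat d(xH,yH)\;:=\;\inf\{d(xh,y):h\in H\}
\]
and argue exactly as in the proof of Lemma~\ref{lem12041918}: the infimum is attained because closed balls of $(G,d)$ are compact and $H$ is closed; $\hat d$ is symmetric, left $G$-invariant, and satisfies the triangle inequality by the standard minimizing argument; and $\hat d(xH,yH)=0$ forces $xH=yH$. Moreover the quotient projection $(G,d)\to(G/H,\hat d)$ is a submetry, which in particular implies that $\hat d$ is admissible (alternatively, admissibility follows for free once we have $\hat A$-homogeneity, by Theorem~\ref{thm11171734}).

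Finally, I would verify the scaling identity. For every $\lambda>0$, using $\delta_\lambda(H)=H$ to reindex,
\[
\hat d(\hat\delta_\lambda(xH),\hat\delta_\lambda(yH))
=\inf_{h\in H}d((\delta_\lambda x)h,\delta_\lambda y)
=\inf_{h'\in H}d(\delta_\lambda(xh'),\delta_\lambda y)
=\lambda\,\hat d(xH,yH).
\]
Together with $(\hat\delta_\lambda)_*=\lambda^{\hat A}$, this shows that $\hat d$ is $\hat A$-homogeneous on $G/H$. I do not expect any genuine obstacle beyond careful bookkeeping; the only non-formal step is the observation that $A$-invariance of $\frk h$ simultaneously yields $\delta_\lambda$-invariance of $H$ for all $\lambda$, which is what allows a one-parameter group (rather than a single automorphism, as in Lemma~\ref{lem12041918}) to descend to the quotient.
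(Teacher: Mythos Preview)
Your proposal is correct and matches the paper's intent exactly: the paper does not give a proof but simply remarks that the lemma is a variation of Lemma~\ref{lem12041918} and leaves it to the reader. Your argument carries out precisely that variation, replacing the single $\delta$ with the one-parameter group $\delta_\lambda$, and the bookkeeping you supply (in particular that $A(\frk h)\subset\frk h$ forces $\delta_\lambda(H)=H$ for all $\lambda$ and that $(\hat\delta_\lambda)_*=\lambda^{\hat A}$) is the only extra content needed.
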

	
\subsection{Necessary condition for $A$-homogeneous distances}
Here we prove that~\ref{thm12041046item1} implies~\ref{thm12041046item2} in Theorem~\ref{thm12041046}.

Let $A$ be a derivation on the Lie algebra $\frk g$ of a Lie group $G$.
 Let $\frk g = \bigoplus_{t\in\R} V_t$ be the real grading defined by $A$ as in Proposition~\ref{prop12061739}.
Suppose that there is an $A$-homogeneous distance on $G$.
Then $G$ is connected simply connected, $V_t=\{0\}$ for all $t<1$, and $\frk g$ is nilpotent by Theorem~\ref{teo08221641}.


\begin{proposition}\label{prop10011823}
	Let $G$ be a  Lie group equipped with  an $A$-homogeneous distance, for some derivation $A$.
	Then $A|_{V_1}$ is diagonalizable over $\C$. 
\end{proposition}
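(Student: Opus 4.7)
\emph{Plan.} I would argue by contradiction: assume $A|_{V_1}$ has a Jordan block of size at least $2$, and contradict it in two stages---first reducing to an abelian vector group, then producing a sequence that remains in a fixed $d$-ball but escapes to infinity in Euclidean norm, generalizing Example~\ref{ex:without_distances}.

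\emph{Reduction.} Set $\frk h := \bigoplus_{t>1} V_t$. By Theorem~\ref{teo08221641}, $V_t=\{0\}$ for $t<1$, so the grading relation $[V_s,V_t]\subset V_{s+t}$ forces $\frk h$ to be an ideal; it is clearly preserved by $A$. Lemma~\ref{lem07251552} then supplies an $\hat A$-homogeneous (admissible) distance on $G/\exp(\frk h)$, and since $[V_1,V_1]\subset V_2\subset\frk h$, the quotient Lie algebra is abelian---canonically identified with $V_1$ with zero brackets---and $\hat A$ is identified with $A|_{V_1}$. Replacing $(G,A,d)$ by this quotient, I may assume $G=V_1$ is a vector group, $A$ is a non-diagonalizable (over $\C$) endomorphism whose eigenvalues all have real part $1$, and $d$ is an admissible $A$-homogeneous distance on $V_1$.

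\emph{Bounded but Euclidean-unbounded sequence.} Pick a Jordan chain $v_1,v_2\in(V_1)_\C$ with $v_1\neq 0$, $Av_1=\alpha v_1$, $Av_2=\alpha v_2+v_1$, and $\alpha=1+ib$; in the real case $b=0$ I may take $v_1,v_2\in V_1$. A direct induction gives $A^kv_2=\alpha^k v_2+k\alpha^{k-1}v_1$, whence
\[
\delta_\mu v_2 \;=\; \mu^\alpha\bigl(v_2+\log(\mu)\,v_1\bigr), \qquad \mu>0,
\]
and, applying $\Re$,
\[
\delta_\mu(\Re v_2) \;=\; \Re(\mu^\alpha v_2) \;+\; \log(\mu)\,\Re(\mu^\alpha v_1).
\]
Fix $\lambda\in(0,1)$, choosing $b\log(\lambda)/(2\pi)\notin\Q$ if $b\neq 0$, and set $n_m:=\lfloor\lambda^{-m}\rfloor$ and $p_m:=n_m\,\delta_{\lambda^m}(\Re v_2)$. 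Since $V_1$ is abelian, left-invariance plus the triangle inequality gives
\[
d(0,p_m) \;\leq\; n_m\,d(0,\delta_{\lambda^m}(\Re v_2)) \;=\; n_m\lambda^m\,d(0,\Re v_2) \;\leq\; d(0,\Re v_2),
\]
so every $p_m$ sits in a fixed closed $d$-ball, which is compact (hence Euclidean-bounded) by Lemma~\ref{lem08161113}. With $\mu=\lambda^m$, so $\mu^\alpha=\lambda^m e^{imb\log\lambda}$, the dominant Euclidean contribution to $p_m$ is $(n_m\lambda^m)\,m\log(\lambda)\,\Re(e^{imb\log\lambda}v_1)$, with the rest uniformly bounded. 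When $b=0$, the factor $\Re(e^{imb\log\lambda}v_1)=v_1$ is a fixed nonzero vector, so $\|p_m\|\to\infty$ as in Example~\ref{ex:without_distances}. When $b\neq 0$, Weyl equidistribution of $(mb\log\lambda)_m$ modulo $2\pi$, combined with the fact that $\{\zeta\in S^1:\Re(\zeta v_1)=0\}$ is finite (since $v_1\neq 0$), lets me extract a subsequence $(m_k)$ along which $|\Re(e^{im_k b\log\lambda}v_1)|$ is bounded below by a positive constant, and then $\|p_{m_k}\|\to\infty$. This contradicts the Euclidean-boundedness of the fixed $d$-ball.

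\emph{Main obstacle.} The real case $b=0$ is essentially the computation already recorded in Example~\ref{ex:without_distances}; the genuine difficulty is the complex-eigenvalue case $b\neq 0$, where one must select $\lambda$ with $b\log(\lambda)$ irrational modulo $2\pi$ and extract a subsequence via equidistribution to keep the oscillating phase $e^{imb\log\lambda}$ from sabotaging divergence along arithmetic progressions.
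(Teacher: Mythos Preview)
Your argument is correct, and it differs from the paper's in one essential way. The paper first invokes Corollary~\ref{cor07251444} and Lemma~\ref{lem10222333} to replace $A$ by $A-A_I$, thereby reducing to the case where the spectrum of $A$ is real; it then quotients not just by $\bigoplus_{t>1}V_t$ but by a larger ideal, landing exactly on the two-dimensional model $\hat A=\begin{pmatrix}1&1\\0&1\end{pmatrix}$ on $\R^2$, and simply cites Example~\ref{ex:without_distances}. You instead stop the quotient at $V_1$ and confront the complex-eigenvalue Jordan block head-on, using the irrationality of $b\log\lambda/(2\pi)$ to keep the phase $\Re(e^{imb\log\lambda}v_1)$ from collapsing along a subsequence. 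Your route avoids the biLipschitz-modification machinery of Lemma~\ref{lem10222333} at the cost of a short density argument; the paper's route is cleaner once that lemma is in hand. One small point worth making explicit in your write-up: when $b\neq0$ you need $\Re v_2\neq0$ for $p_m$ to be nontrivial, but this is automatic, since $\Re v_2=0$ would force $v_2=-\bar v_2\in E^A_\alpha\cap E^A_{\bar\alpha}=\{0\}$.
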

\begin{proof}
	 By Corollary~\ref{cor07251444} and Lemma~\ref{lem10222333}, we can assume that the eigenvalues of $A$ are all real, because $(A-A_I)|_{V_1}$ is diagonalizable if and only if $A|_{V_1}$ is. 
	By Theorems~\ref{thm11171734} and~\ref{teo08221641}, we have $\frk g = V_1\oplus\bigoplus_{s>1} V_s$, with $V_1\neq\{0\}$.
	Arguing by contradiction, suppose that $A|_{V_1}$ is not diagonalizable.
	
	Let $b_1,\dots,b_r\in V_1$ be a basis so that the matrix representation of $A|_{V_1}$ with respect to this basis is in Jordan normal form.
	Since $A|_{V_1}$ is not diagonalizable, we can assume $A(b_r)=b_r+b_{r-1}$ and $A(\Span_\R\{b_1,\dots,b_{r-2}\})\subset\Span_\R\{b_1,\dots,b_{r-2}\}$.
	Let $\frk h=\Span_\R\{b_1,\dots,b_{r-2}\}\oplus\bigoplus_{s>1}V_s$.
	Then $\frk h$ is an ideal of $\frk g$ and $A(\frk h)\subset\frk h$.
	Therefore, by Lemma~\ref{lem07251552}, 
	there is a $\hat A$-homogeneous distance
	on the quotient group $\hat G:=G/\exp(\frk h)\simeq\Span_\R(b_{r-1},b_r)$ 
	where $\hat A=\begin{pmatrix}1&1\\0&1\end{pmatrix}$ in the basis $(b_{r-1},b_r)$.
	However, we showed in Example~\ref{ex:without_distances} that such a distance does not exist.
\end{proof}

\subsection{Construction of an $A$-homogeneous distance}
Here we prove that~\ref{thm12041046item2} implies~\ref{thm12041046item1} in Theorem~\ref{thm12041046}.

Let $G$ be a connected simply connected nilpotent  Lie group with Lie algebra  $\frk g$ and 
let $A$ be a derivation on $\frk g$.
Let $\frk g = \bigoplus_{t\ge1} V_t$ be the real grading defined by $A$ as in Proposition~\ref{prop12061739}.
Since $G$ is simply connected and nilpotent, the exponential map $\frk g\to G$ is a diffeomorphism.
For simplicity in the exposition, we will identify $\frk g$ and $G$ via the exponential map.
Via this identification, the Lie algebra automorphism $\lambda^A$ of $\frk g$ is a Lie group automorphism of $G$, for all $\lambda>0$.

\begin{lemma}\label{lem08122132}
	Let $A\in\Der(\frk g)$.
	For $t>0$, define
	\[
	W_t := \bigoplus_{s\in\R} E^A_{t+is} \subset\frk g_\C  ,
	\]
	so that $V_t=\g\cap W_t$. 
	For every $\theta\in(0,1)$ there is a norm $\|\cdot\|$ on $\frk g_\C$ such that the following holds: For all $t>0$, if $W\subset W_t$ is such that $AW\subset W$, then for all $\lambda\in[0,1]$
	\begin{align}
	\label{eq08122104a} 
		\|\lambda^A|_W\| &\le \lambda^{t-\theta}\qquad \text{or} \\
	\label{eq08122104b} 
		\|\lambda^A|_W\| &\le \lambda^t \qquad\text{ if $A|_W$ is diagonalizable over $\C$} ,
	\end{align}
	where $\|\lambda^A|_W\|$ is the operator norm of the linear operator $\lambda^A|_W:(W,\|\cdot\|)\to (W,\|\cdot\|)$.

	Moreover, the norm $\|\cdot\|$ can be defined by an Hermitian product on $\frk g_\C$ for which the spaces $W_t$ are orthogonal to each other.
\end{lemma}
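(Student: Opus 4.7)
}
The plan is to exploit the Jordan decomposition $A = A_S + A_N$, with $A_S$ semisimple, $A_N$ nilpotent, and $[A_S,A_N]=0$; since both $A_S$ and $A_N$ are polynomials in $A$, they preserve every $A$-invariant subspace of $\frk g_\C$, and on each generalized eigenspace $E^A_\alpha$ the semisimple part acts as the scalar $\alpha\,\Id$. The strategy is to construct a single Hermitian inner product on $\frk g_\C$ for which (i) the $E^A_\alpha$ are pairwise orthogonal, and (ii) the operator norm of $A_N$ is at most $\theta$. Property (i) automatically makes the $W_t=\bigoplus_{s\in\R}E^A_{t+is}$ pairwise orthogonal, which is the ``moreover'' statement.

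To realize (ii), inside each $E^A_\alpha$ I would pick a Jordan basis $v_1,\dots,v_k$ for $A_N|_{E^A_\alpha}$ with $A_N v_i = v_{i-1}$ (and $v_0:=0$), and replace it by the rescaled basis $\tilde v_i := \epsilon^{i-1} v_i$ for a parameter $\epsilon>0$. A direct computation gives $A_N\tilde v_i = \epsilon\,\tilde v_{i-1}$, so declaring $\{\tilde v_i\}$ orthonormal makes the matrix of $A_N$ purely off-diagonal with entries of size $\epsilon$, whence $\|A_N|_{E^A_\alpha}\|\le C_\alpha\epsilon$ for a constant $C_\alpha$ depending only on the Jordan type. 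Taking the orthogonal direct sum of these inner products over $\alpha\in\sigma(A)$ produces a Hermitian inner product on $\frk g_\C$ with $\|A_N\|\le C\epsilon$ for some $C=C(\dim\frk g)$; choosing $\epsilon$ small enough in terms of $\theta$ yields $\|A_N\|\le\theta$ and, simultaneously, the orthogonality in (i).

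Next I would bound $\|\lambda^A|_W\|$ for an arbitrary $A$-invariant subspace $W\subset W_t$ via the factorization $\lambda^A=\lambda^{A_S}\lambda^{A_N}$, which is valid since $[A_S,A_N]=0$. Because $W$ is preserved by $A_S$ and $A_N$, it splits orthogonally as $W=\bigoplus_s(W\cap E^A_{t+is})$; on each summand $A_S$ acts as $(t+is)\Id$, so $\lambda^{A_S}$ acts as $\lambda^{t+is}\Id$ of modulus $\lambda^t$, giving $\|\lambda^{A_S}|_W\|=\lambda^t$ for every $\lambda\in(0,1]$. For the nilpotent factor, using $\lambda^{A_N}=e^{(\log\lambda)A_N}$ together with the elementary bound $\|e^X\|\le e^{\|X\|}$,
\[
\|\lambda^{A_N}|_W\|\le e^{|\log\lambda|\cdot\|A_N|_W\|}\le e^{\theta|\log\lambda|}=\lambda^{-\theta}
\qquad\forall\lambda\in(0,1].
\]
Multiplying the two bounds gives~\eqref{eq08122104a}, and the case $\lambda=0$ follows by continuity, since $t>0$ forces $\lambda^A|_W\to 0$. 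Finally, if $A|_W$ is diagonalizable over $\C$, uniqueness of the Jordan decomposition applied to $A|_W=A_S|_W+A_N|_W$ forces $A_N|_W=0$, hence $\lambda^A|_W=\lambda^{A_S}|_W$, and the sharper equality $\|\lambda^A|_W\|=\lambda^t$ yields~\eqref{eq08122104b}.

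The only genuinely delicate point is that $\epsilon$ must be chosen globally so that $\|A_N\|\le\theta$ holds on all of $\frk g_\C$ at once, and that the resulting operator norm controls $A_N|_W$ uniformly over every $A$-invariant $W\subset W_t$. This is routine: the rescaling is performed independently inside each $E^A_\alpha$, the orthogonal direct sum turns the blockwise bounds into a single uniform bound on $\|A_N\|$, and $\|A_N|_W\|\le\|A_N\|$ automatically for every subspace $W$. Everything else is standard spectral bookkeeping for the commuting pair $(A_S,A_N)$.
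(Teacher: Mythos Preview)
Your argument is correct and follows the same plan as the paper's proof: rescale a Jordan basis so that the nilpotent part $A_N$ has small operator norm, declare the rescaled basis orthonormal (making the generalized eigenspaces, and hence the $W_t$, mutually orthogonal), and then factor $\lambda^A$ on an $A$-invariant $W\subset W_t$ into its commuting semisimple and nilpotent pieces. The paper decomposes the semisimple part further as $A_R+A_I$, but this is only cosmetic.

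The one substantive difference is how you bound the nilpotent factor. You arrange $\|A_N\|\le\theta$ and then use the single inequality $\|e^{(\log\lambda)A_N}\|\le e^{\theta|\log\lambda|}=\lambda^{-\theta}$, which immediately gives~\eqref{eq08122104a}. The paper instead writes out the finite Taylor polynomial $\lambda^{A_N}=\sum_{j=0}^m\frac{\log(\lambda)^j}{j!}A_N^j$, introduces the auxiliary function $f_\epsilon(\lambda)=\lambda^{\theta}\sum_{j=0}^m\frac{|\log\lambda|^j}{j!}\|A_N\|_\epsilon^j$, and proves $f_\epsilon\le1$ on $[0,1]$ by a case split between $[0,1/2]$ and $[1/2,1]$, the latter via a first-derivative argument. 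Your route is shorter and avoids that analysis entirely; the paper's route records the finite nilpotency order but gains nothing from it here.
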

\begin{proof}
	Let $(b_1,\dots,b_n)$ be a basis of $\frk g_\C$ such that the matrix representation of $A$ is in Jordan normal form.
	In other words, the matrix $M$ of $A$ in this basis has the eigenvalues of $A$ on the diagonal, some 1 on the upper diagonal and 0 in all the other entries.
	For every $\epsilon>0$, define a new basis $b^\epsilon_1,\dots,b^\epsilon_n$ with $b^\epsilon_j:=\epsilon^jb_j$.
	Then, the matrix $M^\epsilon$ of $A$ in this new basis is the same as $M$, but the 1 in the upper diagonal are replaced with $\epsilon$.
	Indeed, on the one hand, if $Ab_j = M_{jj}b_j$, then $Ab^\epsilon_j=M_{jj} b^\epsilon_j$; 
	On the other hand, if $A b_j = M_{jj} b_j + b_{j-1}$, then 
	\[
	Ab^\epsilon_j = \epsilon^j (M_{jj} b_j + b_{j-1}) 
	= M_{jj} b^\epsilon_j + \epsilon b^\epsilon_{j-1} .
	\]
	Notice that the nilpotent part of $A$, i.e., the linear map $A_N\in\Der(\frk g_\C)$ defined in Corollary~\ref{cor07251444}, is represented by the matrix $M^\epsilon_N$ that is $M^\epsilon$ with the diagonal entries replaced by $0$. 
	
	Let $\langle , \rangle_\epsilon$ be the Hermitian form on $\frk g_\C$ such that $b^\epsilon_1,\dots,b^\epsilon_n$ are orthonormal and let $\|\cdot\|_\epsilon$ be the corresponding norm.
	Then the operator norm $\|A_N\|_\epsilon = \|M^\epsilon_N\|$ is arbitrarily small as $\epsilon\to0^+$.
	
	For reasons that will appear evident shortly, we need the following fact: 
	there is $\epsilon>0$ such that 
	\[
	f_\epsilon(\lambda):=\lambda^{\theta} +  \sum_{j=1}^m (-1)^j \frac{\lambda^{\theta}\log(\lambda)^j}{j!} \|A_N\|_\epsilon^j \le 1,
	\text{ for all $\lambda\in[0,1]$,}
	\]
	where $m\in\N$ is such that $A_N^m=0$.
	Indeed, first of all notice that $f_\epsilon(0)=0$ and $f_\epsilon(1)=1$.
	Next, if $\lambda\in[0,1/2]$, then $f_\epsilon(\lambda)\le 1$ if $\|A_N\|_\epsilon$ is small enough.
	Finally, if $\lambda\in[1/2,1]$, then $f_\epsilon$ is smooth with first derivative as close as wished to $\theta \lambda^{\theta-1}$, as $\epsilon\to0^+$.
	Since $\theta \lambda^{\theta-1}\ge\theta 2^{-|\theta-1|}>0$, then, if $\epsilon>0$ is small enough, $f_\epsilon'(\lambda)>0$ for all $\lambda\in[1/2,1]$.
	Thus $f_\epsilon(\lambda)\le1$ for $\lambda\in[1/2,1]$, because $f_\epsilon(1)=1$.
	
	The claim is proven: we fix such an $\epsilon>0$.	
	Fix a subspace $W\subset W_t$ such that $AW=W$.
	Let $A_R,A_I,A_N\in\Der(\frk g_\C)$ be as in Corollary~\ref{cor07251444}.
	Then $\|\lambda^{A_I}|_W\|_\epsilon=1$, because the matrix representation of $A_I|_W$ is diagonal with purely imaginary entries.
	Since $A_R|_W=t\Id$,  then $\|\lambda^{A_R}|_{W}\|_\epsilon=\lambda^t$.
	For all $\lambda\in[0,1]$, we have
	\begin{align*}
	\|\lambda^{A}|_{W}\|_\epsilon
	&\le \|\lambda^{A_R}|_{W}\|_\epsilon \cdot \|\lambda^{A_I}|_{W}\|_\epsilon \cdot \|\lambda^{A_N}|_{W}\|_\epsilon 
	= \lambda^t \, \left\| \Id|_W + \sum_{j=1}^m \frac{\log(\lambda)^j}{j!} (A_N|_W)^j\right\|_\epsilon \\
	&\le \lambda^{t-\theta} \, \left(\lambda^{\theta} +  \sum_{j=1}^m \frac{\lambda^{\theta}|\log(\lambda)|^j}{j!} \|A_N\|_\epsilon^j\right) 
	= \lambda^{t-\theta} f_\epsilon(\lambda) 
	\le \lambda^{t-\theta} ,
	\end{align*}
	where the first inequality uses the sub-multiplicity of operator norms.
	The estimate~\eqref{eq08122104a} is thus proven.
	
	If $A|_W$ is diagonalizable over $\C$, i.e., $A_N|_W=0$, then 
	$\|\lambda^A|_{W}\|_\epsilon = \lambda^t$ and thus estimate~\eqref{eq08122104b} is also proven.
\end{proof}

\begin{lemma}\label{lem10011955}
	In the hypothesis of Theorem~\ref{thm12041046}.\ref{thm12041046item2}, assume that 
	$\frk g$ is Abelian.
	Then there is an $A$-homogeneous distance.
\end{lemma}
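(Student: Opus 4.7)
The plan is to exhibit a set $B \subset G$ satisfying the three conditions of Lemma~\ref{lem10011902} and thereby produce the $A$-homogeneous distance. Since $G$ is abelian, simply connected and nilpotent, the exponential map identifies $G$ with $(\frk g,+)$ as Lie groups; consequently, writing points of $G$ as elements of $\frk g$, the $A$-convexity condition reads
\[
\lambda^A x + (1-\lambda)^A y \in B \qquad \forall x,y \in B,\ \forall \lambda \in [0,1].
\]

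First I would fix $\theta \in (0,1)$ strictly smaller than $t-1$ for every $t>1$ with $V_t \neq \{0\}$ (only finitely many such $t$ exist, and if none do then $\theta \in (0,1)$ is arbitrary). Applied with this $\theta$, Lemma~\ref{lem08122132} supplies a Hermitian product on $\frk g_\C$ making the subspaces $W_t$ pairwise orthogonal and yielding $\|\lambda^A|_{W_t}\| \le \lambda^{t-\theta}$ for $t>1$; on $W_1$, the stronger bound $\|\lambda^A|_{W_1}\| \le \lambda$ applies, because $W_1$ is the complexification of $V_1$ (it is stable under complex conjugation, being the sum of conjugate pairs $E^A_{1\pm is}$) and therefore the hypothesis that $A|_{V_1}$ is diagonalizable over $\C$ transfers to $A|_{W_1}$. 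The choice of $\theta$ guarantees $\lambda^{t-\theta} \le \lambda$ for every $\lambda \in [0,1]$, so in either case $\|\lambda^A|_{W_t}\| \le \lambda$.

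Second I would extract a real Euclidean structure on $\frk g$ by setting $\|v\|^2 := \langle v, v\rangle$ for $v \in \frk g \subset \frk g_\C$; this quantity is real and positive because the Hermitian form is positive definite. The decomposition $\frk g = \bigoplus_{t\ge 1} V_t$, with $V_t = \frk g \cap W_t$, is orthogonal for $\|\cdot\|$ (inherited from the orthogonality of the $W_t$), and since $\lambda^A$ preserves each $V_t \subset W_t$, orthogonal summation gives $\|\lambda^A v\| \le \lambda \|v\|$ for every $v \in \frk g$ and every $\lambda \in [0,1]$.

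Finally I set $B := \{v \in \frk g : \|v\| \le 1\}$. Then $B$ is a compact symmetric neighborhood of $0$, and the triangle inequality combined with the contraction estimate gives
\[
\|\lambda^A x + (1-\lambda)^A y\| \le \|\lambda^A x\| + \|(1-\lambda)^A y\| \le \lambda\|x\| + (1-\lambda)\|y\| \le 1,
\]
so $B$ is $A$-convex. Lemma~\ref{lem10011902} then produces the desired $A$-homogeneous distance on $G$. I do not foresee a serious obstacle here: the analytic heart of the argument is already packaged into Lemma~\ref{lem08122132}, and in the abelian setting the passage from a contracting linear estimate to an $A$-convex unit ball is a direct convexity computation, requiring only that $\theta$ be chosen in compatibility with the grading.
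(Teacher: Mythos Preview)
Your proposal is correct and follows essentially the same approach as the paper: choose $\theta\in(0,1)$ with $t-\theta>1$ for all nontrivial layers $t>1$, invoke Lemma~\ref{lem08122132} to obtain an orthogonal norm satisfying $\|\lambda^A x\|\le\lambda\|x\|$ for $\lambda\in[0,1]$, and verify that the unit ball is $A$-convex via the triangle inequality. Your justification that diagonalizability of $A|_{V_1}$ over $\C$ transfers to $A|_{W_1}$ (since $W_1$ is the complexification of $V_1$) is a welcome clarification that the paper leaves implicit.
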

\begin{proof}
	Notice that, after the identification $G=\frk g$, the group operation of $G$ is just the vector sum in $\frk g$.
	Let $\|\cdot\|$ be a norm given by Lemma~\ref{lem08122132} with $\theta>0$ such that $t-\theta>1$ for all $t>1$ with $V_t^A\neq\{0\}$, or, equivalently, $W_t\neq\{0\}$.
	Define 
	\(
	B=\{v\in\frk g:\|v\|\le1\} 
	\).
	Since $B$ trivially satisfies both conditions \ref{lem10011902item1} and \ref{lem10011902item2} of Lemma~\ref{lem10011902}, we only need to show that $B$ is $A$-convex.
	
	First, we claim that for all $\lambda\in(0,1)$ and $x\in\frk g$,
	\begin{equation}\label{lem10011907}
	\|\lambda^Ax\| \le \lambda\|x\| .
	\end{equation}
	Indeed, because the decomposition $\frk g=\bigoplus_{t\ge1}V_t$ is orthogonal with respect to the scalar product that defines $\|\cdot\|$, we obtain from Lemma~\ref{lem08122132}
	\[
	\|\lambda^Ax\|^2 
	= \sum_{t\ge1}\|\lambda^Ax_t\|^2
	\le \lambda^2\|x_1\|^2 + \sum_{t>1} \lambda^{2(t-\theta)} \|x_t\|^2 
	\le \lambda^2 \sum_{t\ge1} \|x_t\|^2
	= \lambda^2 \|x\|^2 ,
	\]
	where $x=\sum_{t\ge1} x_t\in\frk g$, $x_t\in V_t$ and $\lambda\in(0,1)$.
	So, we have obtained~\eqref{lem10011907}.
	
	Next, if $x,y\in B$ and $\lambda\in(0,1)$, then we get from~\eqref{lem10011907}
	\begin{align*}
	\|(\lambda^Ax)((1-\lambda)^Ay)\| 
	&= \|\lambda^Ax+(1-\lambda)^Ay\| \\
	&\le \|\lambda^Ax\| + \|(1-\lambda)^Ay\| 
	\le \lambda \|x\| + (1-\lambda) \|y\| 
	\le 1 .
	\end{align*}
	Therefore, $B$ is $A$-convex and so it is the unit ball of an $A$-homogeneous distance on $G$ by Lemma~\ref{lem10011902}.
\end{proof}

\begin{lemma}\label{lem10011942}
	Let $\chi_C:[0,1]\to\R$ be the function
	\begin{align*}
	\chi_C(t) &= t^2\max\{|\log(t)|,|\log(t)|^n\} \\
		& \qquad+ (1-t)^2\max\{|\log(1-t)|,|\log(1-t)|^n\} \\
		& \qquad\qquad- C t(1-t),
	\end{align*}
	where $n\in\N$.
	Then there is $C>0$ such that $\chi_C(t)\le 0$ for all $t\in[0,1]$.
\end{lemma}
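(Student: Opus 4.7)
The plan is to exploit the obvious symmetry $\chi_C(t)=\chi_C(1-t)$ and reduce the problem to the interval $[0,1/2]$, where the term involving $1-t$ stays away from the singularity of $\log$ while the term involving $t$ carries all of the problematic blow-up. Both will be controlled against the quadratic bump $Ct(1-t)$, and the key quantitative input will be the elementary estimate $-\log(1-t)\le t/(1-t)$.

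First I would observe that for $t\in[0,1/2]$ one has $1-t\in[1/2,1]$, so $|\log(1-t)|\le\log 2<1$ and consequently
\[
\max\{|\log(1-t)|,|\log(1-t)|^n\}=|\log(1-t)|.
\]
Using $-\log(1-t)=\int_0^t (1-s)^{-1}\dd s\le t/(1-t)$, this bounds the second summand of $\chi_C$ by $(1-t)^2\cdot t/(1-t)=t(1-t)$. This is the crucial step: the factor $(1-t)^2$ absorbs the singular denominator and yields exactly the shape $t(1-t)$ of the compensating term, which is what allows the argument to close cleanly at $t=1$ after symmetrizing.

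Next I would bound the first summand. The function $\phi(t):=t\,\max\{|\log t|,|\log t|^n\}$ is continuous on $(0,1/2]$ and extends continuously to $\phi(0)=0$, since $t|\log t|^n\to 0$ as $t\to 0^+$ for every $n\in\N$. Hence $M:=\sup_{t\in[0,1/2]}\phi(t)<\infty$, and therefore
\[
t^2\max\{|\log t|,|\log t|^n\} = t\,\phi(t)\le Mt\le 2M\,t(1-t),
\]
where the last inequality uses $1-t\ge 1/2$ on $[0,1/2]$.

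Combining the two estimates gives $\chi_C(t)\le(2M+1)\,t(1-t)-C\,t(1-t)$ on $[0,1/2]$, so any choice $C\ge 2M+1$ yields $\chi_C\le 0$ there, and the symmetry $\chi_C(t)=\chi_C(1-t)$ extends the bound to all of $[0,1]$. I expect the only point requiring care to be the selection of a bound on $(1-t)^2|\log(1-t)|$ that produces a factor of \emph{exactly} $t(1-t)$; weaker estimates would give only $O(t)$ near $t=0$ and fail to match the vanishing of the compensating term at $t=1$.
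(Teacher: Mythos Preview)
Your proof is correct and is noticeably simpler than the paper's. The paper splits $[0,1]$ into the three subintervals $(0,e^{-1}]$, $[e^{-1},1-e^{-1}]$, $[1-e^{-1},1)$ according to which branch of each maximum is active, and then handles the outer intervals by a first-derivative monotonicity argument (showing $(\chi_C^1)'\le 0$ for large $C$, since $2t-1$ is bounded away from $0$) and the middle interval by a convexity argument (showing $(\chi_C^2)''\ge 0$ and the endpoint values are negative for large $C$). Your approach replaces all of this with the symmetry reduction to $[0,1/2]$ and the single pointwise inequality $-\log(1-t)\le t/(1-t)$, together with the boundedness of $t\,|\log t|^n$ near $0$; no differentiation is needed. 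What the paper's approach buys is an explicit, if laborious, analysis of each regime; what yours buys is a two-line argument once the inequality $-\log(1-t)\le t/(1-t)$ is in hand.

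One minor remark: your closing worry is overstated. After restricting to $[0,1/2]$ you never approach $t=1$, and on that interval any bound of the form $(1-t)^2|\log(1-t)|\le Kt$ already yields $\le 2Kt(1-t)$ since $1-t\ge 1/2$; the precise constant $1$ you obtain from $-\log(1-t)\le t/(1-t)$ is convenient but not essential.
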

\begin{proof}
	Notice that, if $t\in(0,1)$, $|\log(t)|^n 
	= \max\{|\log(t)|,|\log(t)|^n\}$ if and only if $|\log(t)|\ge 1$, i.e., if and only if $t\in(0,e^{-1}]$;
	Similarly, $|\log(1-t)|^n 
	= \max\{|\log(1-t)|,|\log(1-t)|^n\}$ if and only if $|\log(1-t)|\ge1$, i.e., if and only if $t\in[1-e^{-1},1)$.
	Therefore
	\[
	\chi_C(t) = 
	\begin{cases}
	\chi_C^1(t) &\text{if } t\in(0,e^{-1}] , \\
	\chi_C^2(t) &\text{if } t\in[e^{-1},1-e^{-1}], \\
	\chi_C^3(t) &\text{if } t\in[1-e^{-1},1) , \\
	\end{cases}
	\]
	where
	\begin{align*}
	\chi_C^1(t) &= t^2 \log\left(\frac1t\right)^n + (1-t)^2 \log\left(\frac1{1-t}\right) - Ct(1-t) , \\
	\chi_C^2(t) &= t^2 \log\left(\frac1t\right) + (1-t)^2 \log\left(\frac1{1-t}\right) - Ct(1-t) , \\
	\chi_C^3(t) &= t^2 \log\left(\frac1t\right) + (1-t)^2 \log\left(\frac1{1-t}\right)^n - Ct(1-t) .
	\end{align*}
	
	We prove the lemma in each of the three intervals.
	
	Case 1: $\chi_C^1(t)\le 0$ for $t\in(0,e^{-1}]$ and $C$ large enough.
	Notice that $\lim_{t\to0^+}\chi^1_C(t)=0$ and that 
	\begin{align*}
	(\chi_C^1)'(t) &= f(t) + C(2t-1)
		,\text{ where }\\
	f(t) &=
		t\log\left(\frac1t\right)^{n-1} \left(2\log\left(\frac1t\right) - n\right) 
		+(1-t) \left( - 2\log\left(\frac1{1-t}\right) + 1 \right) .
	\end{align*}
	Since $e^{-1}<1/2$ ($e=2.719\dots$), then $(2t-1)<-\epsilon$ for some $\epsilon>0$.
	Since, $f$ is a smooth function on $(0,e^{-1}]$ with $\lim_{t\to0^+}f(t) = 1$, then $f$ is bounded on $(0,e^{-1}]$, say $\sup_{(0,e^{-1}]}f(t)\le M$.
	Therefore, there is $C>0$ large so that 
	\(
	(\chi_C^1)'(t)\le M - \epsilon C \le 0
	\)
	for all $t\in(0,e^{-1}]$.
	Hence, $\chi_C^1$ is a decreasing function with $\chi_C^1(0)=0$, and thus $\chi_C^1(t)\le 0$ for all $t\in(0,e^{-1}]$.
	
	Case 2: $\chi_C^2(t)\le 0$ for $t\in[e^{-1},1-e^{-1}]$ and $C$ large enough.
	In this case we have 
	\[
	\chi_C^2(e^{-1}) = \chi_C^2(1-e^{-1}) = e^{-2} - (1-e^{-1})^2 \log(1-e^{-1}) - C e^{-1} (1-e^{-1}) 
	\]
	and 
	\[
	(\chi_C^2)''(t) 
	= 2C-2(\log(1-t)+\log(t) + 3) .
	\]
	Since $e^{-1} (1-e^{-1})>0$ and since $-2(\log(1-t)+\log(t) + 3)$ is a smooth function on $[e^{-1},1-e^{-1}]$,
	then there is $C>0$ such that
	$\chi_C^2(e^{-1}) = \chi_C^2(1-e^{-1}) <0$ and $(\chi_C^2)''\ge0$ on $[e^{-1},1-e^{-1}]$.
	We conclude that $\chi_C^2(t)\le 0$ for all $t\in[e^{-1},1-e^{-1}]$.
	
	Case 3: $\chi_C^3(t)\le 0$ for $t\in[e^{-1},1-e^{-1}]$ and $C$ large enough.
	Since $\chi_C^3(t) = \chi_C^1(1-t)$, this case follows from Case 1.
\end{proof}

\begin{lemma}\label{lem10011958}
	In the hypothesis of Theorem~\ref{thm12041046}.\ref{thm12041046item2}, assume that 
	$V_t=\{0\}$ for $t>2$.
	Then there is an $A$-homogeneous distance.
\end{lemma}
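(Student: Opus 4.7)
The plan is to build a compact, symmetric, $A$-convex neighborhood $B$ of the identity and apply Lemma~\ref{lem10011902}. First observe that the hypothesis $V_t=\{0\}$ for $t<1$ or $t>2$ forces $\g$ to be step-$2$ nilpotent: for any $s,t\ge 1$, the bracket $[V_s,V_t]\subset V_{s+t}$ vanishes unless $s=t=1$, so $[\g,\g]\subset V_2$ and $[V_2,\g]=0$. Via the exponential identification of $G$ with $\g$, the group law becomes $x*y=x+y+\tfrac12[x,y]$ and inversion is negation.

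Fix $\theta\in(0,1)$ strictly smaller than $t-1$ for every $t>1$ with $V_t\ne\{0\}$; this is possible because the grading has only finitely many nonzero layers. By Lemma~\ref{lem08122132} there is a Euclidean norm $\|\cdot\|$ on $\g$ under which the $V_t$ are mutually orthogonal and
\[
\|\lambda^A|_{V_1}\|\le\lambda,\qquad \|\lambda^A|_{V_t}\|\le\lambda^{t-\theta}\ \text{for }t>1,\qquad \lambda\in[0,1],
\]
where the first estimate uses the $\C$-diagonalizability of $A|_{V_1}$. Let $M\ge 0$ be the operator norm of the bracket $V_1\times V_1\to V_2$. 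For a constant $c>0$ to be chosen below, set
\[
B:=\Bigl\{\textstyle\sum_t x_t\in\g:x_t\in V_t,\ \|x_t\|\le 1\text{ for }t\ne 2,\ \|x_2\|\le c\Bigr\}.
\]
Conditions~\ref{lem10011902item1} and~\ref{lem10011902item2} of Lemma~\ref{lem10011902} are immediate: $B$ is a compact neighborhood of $0$, and the step-$2$ identity $x^{-1}=-x$ gives $B^{-1}=B$.

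For $A$-convexity, take $x,y\in B$ and $\lambda\in[0,1]$. The step-$2$ bracket relations imply that in $(\lambda^Ax)*((1-\lambda)^Ay)$ the only bracket contribution is $\tfrac12[\lambda^Ax_1,(1-\lambda)^Ay_1]\in V_2$. The $V_1$-component has norm at most $\lambda+(1-\lambda)=1$ by the triangle inequality together with $\|\lambda^A|_{V_1}\|\le\lambda$. For each $t$ with $1<t<2$, the $V_t$-component has norm at most $\lambda^{t-\theta}+(1-\lambda)^{t-\theta}\le 1$, since the expression is convex in $\lambda$ (its exponent is $>1$) with value $1$ at both endpoints. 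Finally, the $V_2$-component has norm at most
\[
c\,f(\lambda)+\tfrac{M}{2}\lambda(1-\lambda),\qquad f(\lambda):=\lambda^{2-\theta}+(1-\lambda)^{2-\theta}.
\]
Since $2-\theta>1$, one checks $1-f(\lambda)\sim(2-\theta)\lambda$ as $\lambda\to 0^+$ and symmetrically as $\lambda\to 1^-$, so $K:=\sup_{\lambda\in(0,1)}\lambda(1-\lambda)/(1-f(\lambda))$ is finite. Taking $c\ge MK/2$ yields $c\,f(\lambda)+\tfrac{M}{2}\lambda(1-\lambda)\le c$, completing the verification of $A$-convexity.

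The main technical point is the absorption of the bracket cross-term in $V_2$: the $V_2$-radius $c$ must be taken \emph{large} so that the sharper $V_2$-scaling provides enough convexity gap $1-f(\lambda)$ to dominate $\tfrac{M}{2}\lambda(1-\lambda)$. If one replaces the $\theta$-loss bound of Lemma~\ref{lem08122132} by the exact Jordan estimate $\|\lambda^A|_{V_2}\|\lesssim \lambda^2\max(|\log\lambda|,|\log\lambda|^n)$, the same absorption is packaged precisely by Lemma~\ref{lem10011942}.
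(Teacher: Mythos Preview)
Your argument is correct and, in fact, more direct than the paper's. Both proofs build an $A$-convex ball via Lemma~\ref{lem10011902} and both absorb the bracket cross-term by enlarging the radius in the top layer. The difference is in how the $V_2$-scaling is controlled. The paper first reduces to real spectrum (Lemma~\ref{lem10222333}), isolates the diagonalizable subspace $W\subset V_2$, quotients by $W$ to get an abelian problem (handled by Lemma~\ref{lem10011955}), and then uses the exact Jordan-block estimate $\|\lambda^A|_{V_2}\|\lesssim\lambda^2\max(|\log\lambda|,|\log\lambda|^n)$, which forces the dedicated calculus Lemma~\ref{lem10011942}. You instead apply the cruder bound $\|\lambda^A|_{V_2}\|\le\lambda^{2-\theta}$ directly from Lemma~\ref{lem08122132}, which makes the absorption inequality $c(1-f(\lambda))\ge\tfrac{M}{2}\lambda(1-\lambda)$ a one-line convexity estimate and renders both the real-spectrum reduction and Lemma~\ref{lem10011942} unnecessary. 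Your final paragraph already identifies this trade-off accurately. One cosmetic point: ``step-$2$ nilpotent'' should read ``nilpotent of step at most $2$'', since $[\g,\g]$ may vanish; the BCH formula $x*y=x+y+\tfrac12[x,y]$ is valid in either case.
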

\begin{proof}
	By Corollary~\ref{cor07251444} and Lemma~\ref{lem10222333}, we can assume that the spectrum of $A$ is real.
	In particular, $A|_{V_1} = \Id|_{V_1}$.
	If $V_2=\{0\}$, then the thesis follows from Lemma~\ref{lem10011955}.
	So, we assume that $V_2$ is nontrivial.
	
	Let $\{b_{j,k}:k=1,\dots,m,\ j=0,\dots,n_k\}$ be a basis of $V_2$ such that the matrix representation of $A$ in this basis is in Jordan normal form and such that, for each $k\in\{1,\dots,m\}$ the vectors $b_{0,k},\dots,b_{n_k,k}$ form a basis for one Jordan block.
	Define 
	\[
	W := \Span_{\R} \{b_{0,k}:k=1,\dots,m\} \subset V_2 .
	\]
	The vector space $W$ is the largest subspace of $V_2$ on which $A$ is $\R$-diagonalizable and $Aw=2w$ for all $w\in W$.
	Moreover, since $A$ is $\R$-diagonalizable on $V_1$ and Lie brackets of eigenvectors are eigenvectors, then 
	\begin{equation}
	[\frk g,\frk g] = [V_1,V_1] \subset W .
	\end{equation}
	Let $\langle \cdot,\cdot \rangle$ be a scalar product on $\frk g$ such that the spaces $V_t$ are orthogonal to each other and such that $\{b_{j,k}\}_{j,k}$ is an orthonormal basis of $V_2$.
	We denote by $\pi_W$ the orthogonal projection $\frk g\to W$.
	If $x\in\frk g$, we denote by $x_1$, $x_2$ and $x_W$ the orthogonal projections of $x$ in $V_1$, $V_2$ and $W$, respectively.
	
	We claim that there is $C>0$ such that the following holds: 
	If $x,y\in\frk g$ are such that $\|x-x_W\|\le 1$, $\|y-y_W\|\le 1$, $\|x_W\|\le C$ and $\|y_W\|\le C$, then, for all $\lambda\in(0,1)$, 
	\begin{equation}\label{eq10011942}
	\|\pi_W(\lambda^Ax (1-\lambda)^Ay)\| \le C .
	\end{equation}
	
	First, if $x_2 = \sum_{k=1}^m\sum_{j=0}^{n_k} x_2^{j,k} b_{j,k}$ and $\lambda>0$, then
	\begin{align*}
	\pi_W(\lambda^Ax_2)
	&= \lambda^2 \sum_{k=1}^m \left(\sum_{j=0}^{n_k} \frac{\log(\lambda)^j}{j!} x_2^{j,k} \right) b_{0,k} \\
	&= \lambda^2 \pi_W(x_2) + \lambda^2 \sum_{k=1}^m \left(\sum_{j=1}^{n_k} \frac{\log(\lambda)^j}{j!} x_2^{j,k} \right) b_{0,k}  .
	\end{align*}
	Therefore, if $\|x_2-\pi_W(x_2)\|\le 1$, i.e., $|x_2^{j,k}|\leq 1$ for $j\neq0$, then
	\begin{equation}\label{eq10011941}
	\|\pi_W(\lambda^Ax_2)\| \le \lambda^2 \|\pi_W(x_2)\| + \lambda^2 n \max\{|\log(\lambda)|,|\log(\lambda)|^n\} ,
	\end{equation}
	where $n=\dim V_2$.
	
	Second, if $x,y\in\frk g$ and $\lambda\in(0,1)$, then 
	\begin{align*}
	(\lambda^Ax)((1-\lambda)^Ay)
	&= \lambda^Ax + (1-\lambda)^Ay + \frac12 [\lambda^Ax_1,(1-\lambda)^Ay_1] \\
	&= \lambda^Ax + (1-\lambda)^Ay + \frac{\lambda(1-\lambda)}{2} [x_1,y_1] ,
	\end{align*}
	because of the Baker–Campbell–Hausdorff formula, the fact that $[\frk g,\frk g]=[V_1,V_1]$, being $V_t=\{0\}$ for $t>2$, and the hypothesis that $A$ is diagonal on $V_1$.
	
	
	Third, let $C$ be such that $\|[x_1,y_1]\| \le C \|x_1\|\,\|y_1\|$ for all $x_1,y_1\in V_1$, which exists because $[\cdot,\cdot]$ is a bilinear map.
	Suppose $x,y\in\frk g$ and $\lambda\in(0,1)$ are such that $\|x-x_W\|\le 1$, $\|y-y_W\|\le 1$, $\|x_W\|\le C$ and $\|y_W\|\le C$.
	Then $\|x_1\|\le1$, $\|y_1\|\le1$ and
	\begin{align*}
	\|\pi_W(\lambda^Ax (1-\lambda)^Ay)\| 
	&= \left\| \pi_W\left( \lambda^A x_2 \right) + \pi_W\left( (1-\lambda)^A y_2 \right) + \frac{\lambda(1-\lambda)}{2} [x_1,y_1] \right\| \\
	&\le \|\pi_W\left( \lambda^A x_2 \right)\| + \|\pi_W\left( (1-\lambda)^A y_2 \right)\| + \frac{\lambda(1-\lambda)}{2} C \|x_1\|\|y_1\| \\
	&\le \lambda^2 \|x_W\| + \lambda^2 n \max\{|\log(\lambda)|,|\log(\lambda)|^n\} \\
	&\qquad	+ (1-\lambda)^2 \|y_W\| \\
	& \qquad\qquad+ (1-\lambda)^2 n \max\{|\log((1-\lambda))|,|\log((1-\lambda))|^n\} \\
	&\qquad\qquad\qquad	+ \frac{\lambda(1-\lambda)}{2} C \\
	&\le C + n \bigg(
		\lambda^2  \max\{|\log(\lambda)|,|\log(\lambda)|^n\} \\
	&\qquad	+ (1-\lambda)^2  \max\{|\log((1-\lambda))|,|\log((1-\lambda))|^n\} \\
	&\qquad\qquad	- \lambda(1-\lambda)\frac{3C}{2n}
		\bigg) ,
	\end{align*}
	where we used~\eqref{eq10011941} in the second last inequality.
	Finally, by Lemma~\ref{lem10011942},  if $C$ is large enough, then the second term of the upper bound is non-positive and thus we obtain the claim~\eqref{eq10011942}.
	
	We are now in the position to conclude the proof.
	Since $[\frk g,\frk g]\subset W$, 
	then $\hat{\frk g}=\frk g/W$ is an Abelian Lie algebra.
	Since $\hat{\frk g}$ is nilpotent and the corresponding group quotient $\hat G:=G/\exp(W)$ is simply connected, we will identify $\hat G$ with $\hat{\frk g}$.
	Denote by $\pi:\frk g\to\hat{\frk g}$ the quotient map.
	Since $A(W)\subset W$, the derivation $A$ induces $\hat{A}\in\Der(\hat{\frk g})$ with $\hat{A}\pi=\pi A$.
	
	By Lemma~\ref{lem10011955}, there is $\hat{B}\subset\hat{\frk g}$ that is the unit ball of a $\hat{A}$-homogeneous distance.
	Let $W^\perp$ be the orthogonal complement of $W$ in $\frk g$.
	Define $\hat{B}'= \pi^{-1}(\hat{B})\cap W^\perp$.
	Since any $\hat{A}$-homogeneous distance induces the manifold topology by Theorem~\ref{thm11171734} and since $\pi:W^\perp\to\hat{\frk g}$ is a linear isomorphism, we may assume that 
	\[
	\hat{B}'\subset \{x\in W^\perp:\|x\|\le 1\} \subset\{x\in\frk g:\|x-x_W\|\le 1\} .
	\]
	Define
	\begin{align*}
	B &= \{x\in\frk g : \pi(x)\in \hat{B}, \ \|x_W\|\le C\} \\
	&= \{x\in\frk g:x-x_W\in \hat{B}',\ \|x_W\|\le C\} .
	\end{align*}
	We shall prove that $B$ is the unit ball of an $A$-homogeneous distance.
	We do this by means of Lemma~\ref{lem10011902}: The only non-trivial property we need to check is $A$-convexity.
	Let $x,y\in B$ and $\lambda\in(0,1)$.
	On the one hand, 
	\[
	\pi(\lambda^Ax(1-\lambda)^Ay) = \lambda^{\hat{A}}\pi(x) (1-\lambda)^{\hat{A}}\pi(y) \in \hat{B} ,
	\]
	because $\hat{B}$ is $\hat{A}$-convex.
	On the other hand, by~\eqref{eq10011942},
	\[
	\|\pi_W(\lambda^Ax(1-\lambda)^Ay)\| \le C .
	\]
	So we constructed an $A$-homogeneous distance on $G$.
\end{proof}

\begin{proposition}\label{prop10031604}
	In the hypothesis of Theorem~\ref{thm12041046}.\ref{thm12041046item2},
	there is an $A$-homogeneous distance on $G$.
\end{proposition}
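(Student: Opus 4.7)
The overall strategy is induction on the number of nonzero layers in the grading $\g = \bigoplus_{t \ge 1} V_t$ induced by $A$. By Corollary~\ref{cor07251444} together with Lemma~\ref{lem10222333}, one reduces immediately to the case where $A$ has real spectrum; since $A|_{V_1}$ is diagonalizable over $\C$ by assumption, it then becomes $\Id|_{V_1}$. Let $t_M := \max\{t : V_t \neq \{0\}\}$. The base cases $t_M \le 2$ are exactly those handled by Lemma~\ref{lem10011958}. For the inductive step I would take $t_M > 2$ and observe that $V_{t_M}$ is a central ideal invariant under $A$: central because $[V_{t_M}, V_s] \subset V_{t_M + s} = \{0\}$ for every $s \ge 1$, and $A$-invariant because it is a layer of the grading induced by $A$. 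Let $\pi : \g \to \hat\g := \g / V_{t_M}$ be the quotient and $\hat A \in \Der(\hat\g)$ the induced derivation. Then $\hat\g$ satisfies the hypotheses of Theorem~\ref{thm12041046}\ref{thm12041046item2} with strictly fewer nonzero layers, so the inductive hypothesis produces an $\hat A$-homogeneous distance on $\hat G := G/\exp(V_{t_M})$ with closed unit ball $\hat B$.

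For the lift I would mimic the construction at the end of Lemma~\ref{lem10011958}. Equip $\g$ with a norm $\|\cdot\|$ furnished by Lemma~\ref{lem08122132}, for some fixed $\theta \in (0, t_M - 1)$, so that the layers $V_t$ are mutually orthogonal and $\|\lambda^A|_{V_t}\| \le \lambda^{t - \theta}$ for all $\lambda \in [0,1]$. Identify $\hat\g$ with the $A$-invariant complement $U := \bigoplus_{t < t_M} V_t$ via $\pi$, and write $x = x_U + x_{V_{t_M}}$ for the corresponding splitting. Define
\[
B := \{x \in \g : x_U \in \hat B \text{ and } \|x_{V_{t_M}}\| \le C\},
\]
where $C > 0$ is a constant to be fixed later. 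Symmetry and the topological conditions of Lemma~\ref{lem10011902} follow immediately from those of $\hat B$, so the entire content of the argument reduces to checking $A$-convexity.

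For $x, y \in B$ and $\lambda \in (0,1)$, the $\hat\g$-projection $\pi(\lambda^A x \cdot (1-\lambda)^A y) = \lambda^{\hat A}\pi(x) \cdot (1-\lambda)^{\hat A}\pi(y)$ lies in $\hat B$ by the $\hat A$-convexity of $\hat B$. To control the $V_{t_M}$-component I would expand the product via the Baker--Campbell--Hausdorff formula. Because $[V_{s_1}, \ldots, V_{s_k}] \subset V_{s_1 + \cdots + s_k}$ and because only finitely many layers are nonzero, only finitely many bracket terms of total weight $t_M$ contribute to $V_{t_M}$. The linear contributions are bounded by Lemma~\ref{lem08122132} as
\[
\|\pi_{V_{t_M}}(\lambda^A x)\| \le \lambda^{t_M - \theta} \|x_{V_{t_M}}\| \le \lambda^{t_M - \theta} C,
\]
and similarly for $(1-\lambda)^A y$. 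Each BCH bracket term of total weight $t_M$ is a product of $\lambda^p (1-\lambda)^q$ (with $p + q = t_M$, possibly decorated with bounded logarithmic factors) times norms of $x_U, y_U$, which are bounded independently of $C$ since $x_U, y_U \in \hat B$. The resulting estimate
\[
\|\pi_{V_{t_M}}(\lambda^A x \cdot (1-\lambda)^A y)\| \le \bigl(\lambda^{t_M - \theta} + (1-\lambda)^{t_M - \theta}\bigr) C + Q(\lambda),
\]
with $Q$ bounded and vanishing at $\lambda \in \{0,1\}$, is to be controlled by $C$ using a quantitative generalization of Lemma~\ref{lem10011942}: since $t_M - \theta > 1$, the function $\lambda^{t_M - \theta} + (1-\lambda)^{t_M - \theta}$ is strictly less than $1$ on the interior of $[0,1]$ and equals $1$ only at the endpoints, exactly where $Q$ vanishes to order $\min(p,q) \ge 1$, so for $C$ sufficiently large the total is at most $C$.

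The main obstacle is this last estimate. Unlike the $2$-step setting of Lemma~\ref{lem10011958} in which only a single bracket appears and lies in the top layer directly, here one must handle arbitrarily nested BCH brackets whose weights compose to $t_M$, and each may carry polynomial factors in $\log\lambda$ and $\log(1-\lambda)$ inherited from the non-diagonalizable part of $A$ on intermediate layers. The balance is delicate but purely analytic: the deficit $C\bigl(1 - \lambda^{t_M-\theta} - (1-\lambda)^{t_M-\theta}\bigr)$ is comparable to $C\lambda(1-\lambda)$ near the endpoints, which dominates the polynomial--logarithmic bound on $Q(\lambda)$ provided $C$ is large, exactly as in the proof of Lemma~\ref{lem10011942}. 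Once $C$ is so fixed, $B$ is $A$-convex, hence is the unit ball of an $A$-homogeneous distance on $G$ by Lemma~\ref{lem10011902}, completing the inductive step.
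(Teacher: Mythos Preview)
Your strategy is the same as the paper's: induct on the number of layers, quotient by the top layer $V_{t_M}$, and lift the unit ball via Lemma~\ref{lem10011902}. Your argument is correct, but the $A$-convexity check you outline is considerably more laborious than what the paper actually does, and the simplification is worth knowing.

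Rather than decomposing the BCH tail layer by layer and tracking the resulting $\lambda^p(1-\lambda)^q(\log\lambda)^k(\log(1-\lambda))^l$ terms, the paper observes that the top-layer component of a product has the form
\[
(xy)_{N+1} = x_{N+1} + y_{N+1} + P_{N+1}(\bar x,\bar y),
\]
where $P_{N+1}$ is polynomial with $P_{N+1}(0,\cdot)=P_{N+1}(\cdot,0)=0$. By compactness of $\hat B'$ one gets a bilinear-type bound $\|P_{N+1}(\bar x,\bar y)\|\le 2C\|\bar x\|\,\|\bar y\|$ for all $\bar x,\bar y\in\hat B'$. Then one chooses $\theta$ small enough that \emph{both} $t_{N+1}-\theta>2$ and $t_j-\theta>1$ for every intermediate layer; Lemma~\ref{lem08122132} then gives $\|\lambda^A|_{V_{t_{N+1}}}\|\le\lambda^2$ and $\|\lambda^A\bar x\|\le\lambda$ for $\bar x\in\hat B'$. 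Plugging $\lambda^A\bar x$, $(1-\lambda)^A\bar y$ (which remain in $\hat B'$) into the bilinear bound yields
\[
\|(\lambda^Ax\,(1-\lambda)^Ay)_{N+1}\| \le C\lambda^2 + C(1-\lambda)^2 + 2C\lambda(1-\lambda) = C
\]
exactly. No endpoint analysis, no generalization of Lemma~\ref{lem10011942}, and no need to separate out the logarithmic factors coming from non-diagonalizable intermediate layers: they are all absorbed into the single inequality $\|\lambda^A\bar x\|\le\lambda$. Your weaker choice $t_M-\theta>1$ forces you into the delicate balance you describe; taking $t_M-\theta>2$ costs nothing (since $t_M>2$) and makes the estimate an identity. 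Incidentally, the paper does not reduce to real spectrum in the inductive step; Lemma~\ref{lem08122132} already handles the complex eigenvalues.
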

\begin{proof}
	We shall prove the proposition by induction on the number of non-trivial layers $N=\#\{t\ge1:V_t\neq\{0\}\}$.
	If $N=1$, then $\frk g$ is Abelian, so we have the thesis from Lemma~\ref{lem10011955}.
	
	Assume that the thesis holds for graded Lie algebras with $N$ layers and let $\frk g=\bigoplus_{j=1}^{N+1} V_{t_j}$ have $N+1$ layers, where $1\le t_1<t_2<\dots<t_{N+1}$.
	If $t_{N+1}\le 2$, then the thesis holds by Lemma~\ref{lem10011958}.
	
	Suppose that $t_{N+1}>2$.
	If $x\in\frk g$, we denote by $x_j$ the component in $V_{t_j}$ of $x$, and $\bar x=x-x_{N+1}=\sum_{j=1}^N x_j$.
	Let $\theta\in(0,1)$ be such that $t_{N+1}-\theta>2$ and $t_j-\theta>1$ for all $t_j>1$.
	Let $\|\cdot\|$ be a norm on $\frk g$ given by Lemma~\ref{lem08122132} with this $\theta$.
	
	Since $V_{t_{N+1}}$ is an ideal of $\frk g$, then $\hat{\frk g}=\frk g/V_{t_{N+1}}$ is a Lie algebra.
	Since $\hat{\frk g}$ is nilpotent and the corresponding group quotient $\hat{G}:=G/\exp(V_{t_{N+1}})$ is simply connected, we will identify $\hat{G}$ with $\hat{\frk g}$.
	Denote by $\pi:\frk g\to\hat{\frk g}$ the quotient map.
	Since $A(V_{t_{N+1}})\subset V_{t_{N+1}}$, the derivation $A$ induces $\hat{A}\in\Der(\hat{\frk g})$ with $\hat{A}\pi=\pi A$.
	
	By the inductive hypothesis, there is $\hat{B}\subset\hat{\frk g}$ that is the unit ball of a $\hat{A}$-homogeneous distance.
	Let $\hat{B}'=\pi^{-1}(\hat{B})\cap \bigoplus_{j=1}^N V_{t_j}$.
	Since the restriction $\pi:\bigoplus_{j=1}^N V_{t_j}\to\hat{\frk g}$ is a linear isomorphism, we can assume that
	\begin{equation}\label{eq10012014}
	\hat{B}'\subset\left\{x\in\frk g: \sum_{j=1}^N\|x_j\| \le 1\right\} .
	\end{equation}
	
	If $\bar x\in\hat{B}'$ and $\lambda\in(0,1)$, then 
	\begin{equation}\label{eq12052225}
	\|\lambda^A\bar x\| \le \sum_{j=1}^N \|\lambda^Ax_j\| \le \lambda\sum_{j=1}^N\|x_j\| \le \lambda ,
	\end{equation}
	because of~\eqref{eq08122104b} and the fact that $A$ is $\R$-diagonal on $V_1$, because of~\eqref{eq08122104a} together with $t_j-\theta>1$ for $t_j>1$, and also by~\eqref{eq10012014}.
	
	Notice that if $x,y\in\frk g$, then 
	\[
	(xy)_{N+1} = x_{N+1} + y_{N+1} + P_{N+1}(\bar x,\bar y),
	\]
	where $P_{N+1}$ has polynomial components in any system of linear coordinates.
	Since $P_{N+1}(0,\bar y) = P_{N+1}(\bar x,0)=0$ and $\hat{B}'$ is compact, there is $C>0$ such that
	\begin{equation}\label{eq12052205}
	\|P_{N+1}(\bar x,\bar y)\| \le 2C \|\bar x\|\,\|\bar y\|
	\qquad\forall \bar x\bar y\in \hat{B}' .
	\end{equation}
	
	We claim that, if $C>0$ is given by~\eqref{eq12052205}, then 
	\begin{align*}
	B &:= \{x\in\frk g:\pi(x)\in \hat{B},\ \|x_{N+1}\|\le C\} \\
	&= \{x\in\frk g:\bar{x}\in \hat{B}',\ \|x_{N+1}\|\le C\} 
	\end{align*}
	is the unit ball of an $A$-homogeneous distance.	
	We prove our claim by means of Lemma~\ref{lem10011902}: The only non-trivial condition we need to prove is $A$-convexity of $B$.
	
	Let $x,y\in B$ and $\lambda\in(0,1)$.
	On the one hand, 
	\[
	\pi(\lambda^Ax(1-\lambda)^Ay) = \lambda^{\hat{A}}\pi(x) (1-\lambda)^{\hat{A}}\pi(y) \in \hat{B} ,
	\]
	because $\hat{B}$ is $\hat{A}$-convex.
	On the other hand
	\begin{align*}
	\|(\lambda^Ax(1-\lambda)^Ay)_{N+1}\|
	&= \left\| \lambda^Ax_{N+1} + (1-\lambda)^Ay_{N+1} + P_{N+1}(\lambda^A\bar x,(1-\lambda)^A\bar y) \right\| \\
	&\le \lambda^2 \|x_{N+1}\| + (1-\lambda)^2\|y_{N+1}\| + 2C \|\lambda^A\bar x\|\,\|(1-\lambda)^A\bar y\| \\
	&\le C(\lambda^2+(1-\lambda)^2+2\lambda(1-\lambda) )
	= C ,
	\end{align*}
	where we used in the first inequality the facts~\eqref{eq08122104a} and $t_{N+1}-\theta>2$, and~\eqref{eq12052225} in the second inequality.
	This completes the proof.
\end{proof}

\section{BiLipschitz reduction to real $A$-homogeneous distances}\label{sec12061813} 
This section is devoted to Theorems~\ref{thm12061648} and~\ref{thm12061651}.
Before diving into the proofs, we prove two preliminary lemmas in Section~\ref{sec12061857}.
The proofs of the theorems will be in the subsequent subsections.

\subsection{Algebraic preliminaries on the image of the exponential map}\label{sec12061857}
\begin{lemma}\label{lem10271051}
	Fix $\bb K\in\{\R,\C\}$.
	Let $\frk g$ be a Lie algebra over $\bb K$ and $A:\frk g\to\frk g$ a $\bb K$-linear map such that $e^A\in\Aut_{\bb K}(\frk g)$.
	If $A$ is nilpotent, then $A\in\Der_{\bb K}(\frk g)$.
\end{lemma}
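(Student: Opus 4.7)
The plan is to exploit nilpotency of $A$ to make $e^{tA}$ a polynomial in the scalar $t$, so that the automorphism identity for $e^A$ can be spread from $t=1$ to all $t \in \bb K$ by interpolation on the integers, and then differentiated at $t=0$ to recover the Leibniz rule.

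First I would note that since $A$ is nilpotent, say $A^N = 0$, the map
\[
\bb K \to \End_{\bb K}(\frk g), \qquad t \mapsto e^{tA} = \sum_{k=0}^{N-1} \frac{t^k}{k!} A^k,
\]
is a polynomial in $t$. Fix arbitrary $x,y\in\frk g$ and define the $\frk g$-valued function
\[
P(t) := e^{tA}[x,y] - [e^{tA}x,\, e^{tA}y].
\]
Since composition and bracket are $\bb K$-bilinear, $P$ is a polynomial in $t$ with coefficients in $\frk g$.

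Next I would use the automorphism hypothesis to pin down $P$ on $\mathbb{Z}$. Because $\Aut_{\bb K}(\frk g)$ is a group and $e^A\in\Aut_{\bb K}(\frk g)$, the iterate $(e^A)^n = e^{nA}$ lies in $\Aut_{\bb K}(\frk g)$ for every $n\in\mathbb{Z}$; hence $P(n)=0$ for all $n\in\mathbb{Z}$. A polynomial over $\bb K$ with infinitely many zeros vanishes identically, so $P\equiv 0$ on $\bb K$.

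Finally I would differentiate at $t=0$. Since $P$ is polynomial, $P'(0)$ is computed termwise and equals
\[
A[x,y] - [Ax, y] - [x, Ay],
\]
which must be zero. As $x,y\in\frk g$ were arbitrary, $A\in\Der_{\bb K}(\frk g)$. There is really no serious obstacle; the only conceptual point is recognising that nilpotency turns the a priori transcendental exponential into a polynomial, so that finite information (the automorphism property at a single exponent) forces the infinitesimal identity.
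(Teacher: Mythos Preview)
Your argument is correct and is essentially the same as the paper's: both use that nilpotency makes $t\mapsto e^{tA}$ polynomial, observe that the automorphism identity holds at every integer, and then read off the degree-one coefficient (you by differentiating $P$ at $0$, the paper by explicitly matching coefficients of the expanded series~\eqref{eq10271035}).
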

\begin{proof}
	Let $N\in\N$ be such that $A^{N+1}=0$.
	For every $m\in\Z$, we have $e^{mA}\in\Aut_{\bb K}(\frk g)$.
	Therefore,
	expanding the exponential's series in the identity $e^{mA}[x,y]=[e^{mA}x,e^{mA}y]$,
	one can show that, for every $x,y\in\frk g$ and all $m\in\Z$
	\begin{equation}\label{eq10271035}
	\sum_{n=0}^N \frac{m^n}{n!} A^n[x,y]
	= \sum_{n=0}^{2N} \frac{m^n}{n!} \sum_{k=0}^n \binom{n}{k} [A^kx,A^{n-k}y] .
	\end{equation}
	Since these are polynomials in $m$ that coincide on $\Z$, then they have the same coefficients.
	In particular, the terms of order $n=1$ are
	\[
	A[x,y] = [Ax,y] + [x,Ay] . \qedhere
	\]
\end{proof}

\begin{lemma}\label{lem10271236}
	Let $\frk g$ be a real Lie algebra, $\phi\in\Aut(\frk g)$ and $\lambda\in(0,\infty)\setminus\{1\}$.
	Then there are $A\in\Der(\frk g)$ and $K\in\Aut(\frk g)$ such that, 
	\begin{enumerate}
	\item
	$\phi = K\lambda^A$;
	\item
	$K$ is $\C$-diagonalizable and $\sigma(K)\subset\bb S^1$;
	\item
	$\sigma(A)\subset\R$;
	\item
	$[K,A]=0$.
	\end{enumerate}
\end{lemma}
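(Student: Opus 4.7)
The plan is to combine the multiplicative Jordan decomposition of $\phi$ with the functional calculus provided by Lemmas~\ref{lemBurba}, \ref{lem10271029}, and \ref{lem10271051}.

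I will first extract the semisimple and unipotent parts. Applied to the identity function on $\sigma(\phi)$ (trivially multiplicative and conjugation--equivariant), Lemma~\ref{lem10271029} yields $\phi_s:=\phi_{\Id}\in\Aut(\frk g)$; direct inspection on a Jordan basis shows that $\phi_s$ acts as $\alpha\,\Id$ on each generalized eigenspace $E^\phi_\alpha$, is semisimple, and commutes with $\phi$. Consequently $\phi_u:=\phi_s^{-1}\phi$ is a unipotent element of $\Aut(\frk g)$; writing $\phi_u=e^N$ with $N$ its nilpotent logarithm, Lemma~\ref{lem10271051} gives $N\in\Der(\frk g)$, and $[\phi_s,N]=0$ forces $N$ to preserve every $E^\phi_\alpha=E^{\phi_s}_\alpha$.

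Next I split $\phi_s$ into a unitary factor and a positive factor. The maps $f_1(\alpha):=\alpha/|\alpha|$ and $f_2(\alpha):=|\alpha|$ on $\sigma(\phi_s)$ are multiplicative and satisfy $f_i(\bar\alpha)=\overline{f_i(\alpha)}$, so Lemma~\ref{lem10271029} provides commuting automorphisms $K:=(\phi_s)_{f_1}$ and $D:=(\phi_s)_{f_2}$ of $\frk g$ with $\phi_s=KD$, $\sigma(K)\subset\bb S^1$, $\sigma(D)\subset(0,\infty)$, both $\C$--diagonalizable. To produce the derivation that exponentiates to $D$, I define $A_D$ on $\frk g_\C$ by $A_D|_{E^{\phi_s}_\alpha}=\log|\alpha|\cdot\Id$. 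Since $[E^{\phi_s}_\alpha,E^{\phi_s}_\beta]\subset E^{\phi_s}_{\alpha\beta}$ by Lemma~\ref{lemBurba} and since $\log|\alpha\beta|=\log|\alpha|+\log|\beta|$, the Leibniz rule holds; the equality $\log|\bar\alpha|=\log|\alpha|$ makes $A_D$ preserve the real form. Hence $A_D\in\Der(\frk g)$ and $e^{A_D}=D$.

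Because $K$ and $A_D$ act as scalars on each $E^{\phi_s}_\alpha$ and $N$ preserves these spaces, the three pairs $(K,A_D)$, $(K,N)$, $(A_D,N)$ all commute. Setting $A:=(A_D+N)/\log\lambda\in\Der(\frk g)$, the commutation $[A_D,N]=0$ gives $\lambda^A=e^{A_D}e^N=D\phi_u$, whence $K\lambda^A=\phi_s\phi_u=\phi$. The remaining properties follow at once: $\sigma(A)\subset\R$ because $A|_{E^{\phi_s}_\alpha}$ is a real scalar plus a nilpotent; $K$ is $\C$--diagonalizable with $\sigma(K)\subset\bb S^1$ by construction; and $[K,A]=0$. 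The one subtle point of the argument is ensuring that every construction descends from $\frk g_\C$ to $\frk g$, which is guaranteed by the conjugation symmetries of $|\alpha|$, $\alpha/|\alpha|$, and $\log|\alpha|$; these let us invoke the real versions of Lemma~\ref{lem10271029} and of the additive analogue used implicitly to verify $A_D\in\Der(\frk g)$.
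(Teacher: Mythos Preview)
Your proof is correct and follows essentially the same route as the paper's. The paper works directly with $\phi$ rather than first passing through $\phi_s$: it defines $K=\phi_k$, $R=\phi_r$, and the unipotent factor $\phi_n\circ\phi$ via the multiplicative functions $k(\alpha)=\alpha/|\alpha|$, $r(\alpha)=|\alpha|$, $n(\alpha)=1/\alpha$, then takes $\tilde A=\phi_{\log\circ r}$ and $D=\log(\phi_n\circ\phi)$ and sets $A=\tilde A+D$ (after reducing to $\lambda=e$). Since $E^{\phi_s}_\alpha=E^\phi_\alpha$, your $K$, $D$, $A_D$, $N$ coincide with the paper's $K$, $R$, $\tilde A$, $D$ respectively, and your framing via the multiplicative Jordan decomposition is just a relabeling of the same construction.
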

\begin{proof}
	Without loss of generality, we assume $\lambda=e$.
	Define $k,r,n:\C^*\to\C^*$ as
	\[
	k(\alpha) = \frac{\alpha}{|\alpha|}, \qquad
	r(\alpha) = |\alpha|, \qquad
	n(\alpha) = \frac1\alpha .
	\] 
	Consequently, with the terminology introduced just before Lemma~\ref{lem10271029}, define the linear maps $K=\phi_k$, $R=\phi_r$ and $N=\phi_n\circ\phi$ on $\frk g_\C$.
	By Lemma~\ref{lem10271029}, since the function $k$, $r$ and $n$ are multiplicative and they commute with the complex conjugation, then $K,R,N\in\Aut_\C(\frk g_\C)\cap\Aut(\frk g)$ and they commute with each other and with $\phi$.
	Moreover, $K$ is diagonalizable and $\sigma(K)\subset\bb S^1$.
		
	Since $r$ is a positive function, then we consider 
	\[
	\tilde A:= \phi_{\log\circ r}, \text{ so that }  R=e^{\tilde A} .
	\]
	We claim that $\tilde A \in\Der(\frk g)$.
	First, since $\log(r(\bar\alpha))=\overline{\log(r(\alpha))}$, for all $\alpha\in\C$, then $\tilde A (\frk g)\subset\frk g$.
	Second, if $v=\sum_\alpha v_\alpha$ and $w=\sum_\alpha w_\alpha$, where $v_\alpha,w_\alpha\in E^\phi_\alpha$, then $[v_\alpha,w_\beta]\in E^\phi_{\alpha\beta}$ by Lemma~\ref{lemBurba} and thus
	\begin{align*}
	\tilde A [v,w]
	&= \sum_{\alpha,\beta\in\sigma(\phi)} \log(|\alpha\beta|)[v_\alpha,w_\beta] \\
	&= \sum_{\alpha,\beta\in\sigma(\phi)} [\log(|\alpha|)v_\alpha,w_\beta] + [v_\alpha,\log(|\beta|)w_\beta] \\
	&= [\tilde A v,w] + [v,\tilde A w] .
	\end{align*}
	Therefore $\tilde A \in\Der(\frk g)$, as claimed.
	
	Notice that $N=\Id + \psi$ with $\psi$ nilpotent linear map on $\frk g_\C$.
	Indeed, if $v_\alpha\in E^\phi_\alpha$, then there is $m\in\N$ such that 
	\[
	\alpha^m (N-\Id)^mv_\alpha
	= \alpha^m (\phi_n\circ\phi-\Id)^mv_\alpha
	= (\phi-\alpha\Id)^mv_\alpha
	= 0 .
	\]
	Since $\alpha\neq0$ because $\phi$ is injective, then $(\phi_n\circ\phi-\Id)^mv_\alpha=0$. 
	Since the number of non-trivial generalized eigenspaces of $\phi$ is finite, there is $m\in\N$ with $(N-\Id)^m=0$.
	
	Since $\psi$ is nilpotent, then
	\[
	D :=\log (N)= \log(\Id+\psi) = \sum_{k=1}^m \frac{(-1)^{k+1}}{k} \psi^k 
	\]
	is well defined and nilpotent, with $e^D=N\in\Aut_\C(\frk g_\C)$.
	By Lemma~\ref{lem10271051}, $D\in\Der_\C(\frk g_\C)$.

	Since $N(\frk g)=\frk g$, then $\psi(\frk g)\subset\frk g$ and thus $D(\frk g)\subset \frk g$.
	Therefore, $D\in\Der(\frk g)$.
	Since $N(E^\phi_\alpha)\subset E^\phi_\alpha$, then $\psi(E^\phi_\alpha)\subset E^\phi_\alpha$ and therefore $D(E^\phi_\alpha)\subset E^\phi_\alpha$.
	Since $\tilde A $ is diagonal on each generalized eigenspace, then $[\tilde A ,D]=0$.
	
	Finally, notice that $\phi=KRN$ and that 
	\[
	RN = e^{\tilde A }e^D 
	= e^{\tilde A +D} .
	\]
	Since $D$ is nilpotent, $\tilde A $ is diagonalizable, and $[\tilde A ,D]=0$, then $\sigma(\tilde A +D)=\sigma(\tilde A )\subset\R$.
	Finally, on the one hand $[\tilde A ,K]= [\tilde A ,\phi_k] = 0$; 
	On the other hand, $[D,K]=0$ because of $0=[N,K]=[\Id+\psi,K]=[\psi,K]$ and the formula defining $D$.
	So, the lemma is proven with $A=\tilde A +D$.
\end{proof}

\subsection{Proof of Theorem~\ref{thm12061648}}\label{sec12061913}
Theorem~\ref{thm12061648} follows from Lemma~\ref{lem10291010} and the following Lemma~\ref{lem12061951}.
\begin{lemma}\label{lem12061951}
	Let $(G,d,\delta,\lambda)$ be a self-similar metric Lie group.
	Then there are $K\in\Aut(\frk g)$ diagonalizable with $\sigma(K)\subset\bb S^1$, 
	$A\in\Der(\frk g)$ with $\sigma(A)\subset[1,\infty)$
	such that $[K,A]=0$, $\delta = K\lambda^A$ and there is an $A$-homogeneous distance on $G$ for which $\delta$ is still a dilation of factor $\lambda$.
\end{lemma}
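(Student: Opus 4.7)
The plan is to combine the linear-algebraic decomposition of Lemma~\ref{lem10271236} with the averaging trick of Lemma~\ref{lem10290953}, reducing the problem to a setting where Theorem~\ref{thm12041046} produces the desired $A$-homogeneous distance. First, applying Lemma~\ref{lem10271236} to $\phi = \delta_*$ and $\lambda$ yields $A \in \Der(\frk g)$ and $K \in \Aut(\frk g)$ such that $\delta_* = K\lambda^A$, $K$ is $\C$-diagonalizable with $\sigma(K) \subset \bb{S}^1$, $\sigma(A) \subset \R$, and $[K,A] = 0$. By Theorem~\ref{teo08221641}, $G$ is connected and simply connected, so $K$ and $\lambda^A$ lift uniquely to commuting Lie group automorphisms of $G$ whose product is $\delta$.

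Next, I would pass to a distance adapted to $\mathcal{K} := \overline{\{K^n : n \in \Z\}}$, which is a compact subgroup of $\Aut(G)$ since $K$ is $\C$-diagonalizable with unit-modulus spectrum. Because $[K,A]=0$, $\mathcal K$ commutes with both $\lambda^A$ and $\delta$, so Lemma~\ref{lem10290953} applied to $d$ produces an admissible, $\mathcal K$-invariant distance $\tilde d$ for which $\delta$ is still a dilation of factor $\lambda$. Since $K$ is then a $\tilde d$-isometry and $\lambda^A = K^{-1}\delta$, the automorphism $\lambda^A$ is likewise a dilation of factor $\lambda$ for $\tilde d$, so $(G,\tilde d,\lambda^A,\lambda)$ is a self-similar metric Lie group. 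Theorem~\ref{teo08221641} applied to this structure then forces every eigenvalue of $A$ to have real part at least $1$; combined with $\sigma(A) \subset \R$, this gives $\sigma(A) \subset [1,\infty)$.

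The main obstacle is showing $A|_{V_1(A)}$ is diagonalizable over $\C$, so that Theorem~\ref{thm12041046}.\ref{thm12041046item2} becomes applicable. I would argue by contradiction in the spirit of Proposition~\ref{prop10011823}: if the restriction had a nontrivial Jordan block, pick a Jordan basis $b_1,\dots,b_r$ of $V_1(A)$ with $A b_r = b_r + b_{r-1}$ and $A\bigl(\Span_\R\{b_1,\dots,b_{r-2}\}\bigr) \subset \Span_\R\{b_1,\dots,b_{r-2}\}$, and set
\[
\frk h := \Span_\R\{b_1,\dots,b_{r-2}\} \;\oplus\; \bigoplus_{t > 1} V_t(A).
\]
Then $\frk h$ is an ideal of $\frk g$ (using $[\frk g,\frk g] \subset \bigoplus_{t \ge 2} V_t(A) \subset \frk h$), one has $A(\frk h) \subset \frk h$, and the induced derivation on $\frk g/\frk h \cong \R^2$ is $\hat A = \begin{pmatrix} 1 & 1 \\ 0 & 1 \end{pmatrix}$ in the basis of cosets of $b_{r-1},b_r$. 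Since $H := \exp\frk h$ is $\lambda^A$-stable, Lemma~\ref{lem12041918} applied to $(G,\tilde d,\lambda^A,\lambda)$ yields an admissible translation-invariant distance $\hat d$ on $\R^2$ for which $\lambda^{\hat A}$ is a dilation of factor $\lambda$, contradicting Example~\ref{ex:without_distances}.

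With the hypotheses of Theorem~\ref{thm12041046}.\ref{thm12041046item2} now verified for $A$, that theorem furnishes an $A$-homogeneous distance $d_0$ on $G$. A final application of Lemma~\ref{lem10290953} to $(d_0,\mathcal K)$ produces an admissible $\mathcal K$-invariant distance $d'$, and since $\mathcal K$ commutes with every $\mu^A$ (by $[K,A]=0$), each $\mu^A$ is still a dilation of factor $\mu$ for $d'$, so $d'$ remains $A$-homogeneous. Then $\delta = K \cdot \lambda^A$ is the composition of the $d'$-isometry $K$ with the $d'$-dilation $\lambda^A$ of factor $\lambda$, hence $\delta$ is itself a dilation of factor $\lambda$ for $d'$, completing the proof.
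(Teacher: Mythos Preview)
Your proof is correct, but it takes a genuinely different route from the paper's. The paper, after obtaining $K$ and $A$ from Lemma~\ref{lem10271236} and passing to a $\scr K$-invariant distance $d'$ exactly as you do, \emph{constructs the $A$-homogeneous distance directly from $d'$} by setting
\[
d''(x,y) := \sup_{\mu>0}\frac{d'(\mu^A x,\mu^A y)}{\mu},
\]
and then observes that, because $\lambda^A$ is already a $d'$-dilation of factor $\lambda$, this supremum reduces to a maximum over $\mu\in[1,\lambda]$, whence $d''$ is finite, admissible, and manifestly $A$-homogeneous; $\scr K$-invariance is preserved, so $\delta=K\lambda^A$ remains a dilation of factor $\lambda$.

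By contrast, you first establish the algebraic hypotheses of Theorem~\ref{thm12041046}\ref{thm12041046item2} (including the diagonalizability of $A|_{V_1}$, by quotienting and invoking Example~\ref{ex:without_distances}), then appeal to Theorem~\ref{thm12041046} to produce an $A$-homogeneous distance, and finally average over $\scr K$ once more. This works, but it leans on the full strength of Theorem~\ref{thm12041046}, whose proof (Lemmas~\ref{lem10011955}, \ref{lem10011958} and Proposition~\ref{prop10031604}) is considerably heavier than the one-line supremum construction. The paper's argument is thus more self-contained and keeps Lemma~\ref{lem12061951} logically independent of the existence half of Theorem~\ref{thm12041046}; your argument is more modular and makes the role of Theorem~\ref{thm12041046} explicit, at the cost of a longer dependency chain.
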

\begin{proof}
	After Theorem~\ref{teo08221641}, we can identify $G$ and $\frk g$ via the exponential map.
	With this identification, $\delta=\delta_*$.
	Let $A\in\Der(\frk g)$ and $K\in\Aut(\frk g)$ as in Lemma~\ref{lem10271236} with $\phi=\delta$.
	Since $K$ is diagonalizable, $\sigma(K)\subset\bb S^1$ and $[A,K]=0$, then the closure $\scr K$ of the group generated by $K$ is a compact subgroup of $\Aut(\frk g)$ and $\delta\scr K\delta^{-1}=\scr K$. 
	By Lemma~\ref{lem10290953}, 
	there is an admissible distance $d'$ on $G$ such that $\delta$ is a dilation of factor $\lambda$ and $K$ is an isometry of $d'$.
	It follows that $\lambda^A$ is a dilation of factor $\lambda$ (remember that $\lambda$ is fixed).
	
	Define $d'':G\times G\to[0,+\infty]$ as
	\[
	d''(x,y) = \sup\left\{\frac{d'(\mu^Ax,\mu^Ay)}{\mu}\,:\, \mu>0 \right\}.
	\]
	We claim that, in fact,
	\begin{equation}\label{eq10291005}
	d''(x,y) = \max\left\{\frac{d'(\mu^Ax,\mu^Ay)}{\mu}\,:\, \mu\in[1,\lambda] \right\} .
	\end{equation}
	Indeed, if $\mu>0$ then there are $k\in\Z$ and $r\in[0,1]$ such that $\mu=\lambda^k\lambda^r$.
	Hence $\frac{d(\mu^Ax,\mu^Ay)}{\mu} = \frac{d'((\lambda^r)^Ax,(\lambda^r)^Ay)}{\lambda^r}$, where $\lambda^r\in[1,\lambda]$.
	Moreover, since $[1,\lambda]$ is compact and $\mu\mapsto \frac{d'(\mu^Ax,\mu^Ay)}{\mu}$ is continuous, the supremum is a maximum.
	
	We now claim that $d''$ is an $A$-homogeneous distance on $G$.
	It is clear that $d''$ is left-invariant and that, for every $\rho>0$ and $x,y\in G$, we have $d''(\rho^Ax,\rho^Ay)=\rho d''(x,y)$. 
	Moreover, from~\eqref{eq10291005} we get that $d''(x,y)<\infty$ and that $d''(x,y)>0$ whenever $x\neq y$.
	So, we are left to show the triangular inequality.
	Let $x,y,z\in G$.
	Then there is $\mu\in[1,\lambda]$ such that $d''(x,z) = \frac{d'(\mu^Ax,\mu^Az)}{\mu}$.
	We conclude that
	\[
	d''(x,z)  
		= \frac{d'(\mu^Ax,\mu^Az)}{\mu} 
		\le \frac{d'(\mu^Ax,\mu^Ay)}{\mu} + \frac{d'(\mu^Ay,\mu^Az)}{\mu} 
		\le d''(x,y) + d''(y,z) .
	\]
	Therefore, $d''$ is an $A$-homogeneous distance on $G$.
	Finally, since $[K,A]=0$, then $[K,\mu^A]=0$ for all $\mu>0$.
	Thus $K$ is still an isometry for $d''$, and $\delta=K\lambda^A$ is also a dilation of factor $\lambda$ for $d''$.
\end{proof}

\subsection{Proof of Theorem~\ref{thm12061651}}\label{sec12062041}
In the hypothesis of Theorem~\ref{thm12061651}.\ref{thm12061651item1}, $(G,d,\delta,\lambda)$ is a self-similar metric Lie group.
From Theorem~\ref{teo08221641} we get that $G$ is connected simply connected and the eigenvalues of $\delta_*$ have modulus smaller than or equal to $\lambda$ if $\lambda<1$, or greater than or equal to $\lambda$ if $\lambda>1$.

Let $A$ and $K$ as in Lemma~\ref{lem12061951} with $\delta_*=K\lambda^A$.
From Theorem~\ref{thm12041046} we get that $A$ is $\C$-diagonalizable on $V_1(A)$.
From Theorem~\ref{teo08221641}.\eqref{teo08221641item7} we also get that $V_1(A) = V_1(\lambda, \delta_*)$, and thus $\lambda^A$ is diagonalizable on $V_1(\lambda, \delta_*)$.
Since also $K$ is diagonalizable and $[K,e^A]=0$, then $\delta$ is also diagonalizable on $V_1(\lambda, \delta_*)$.

This shows that \ref{thm12061651item1} implies \ref{thm12061651item2} in Theorem~\ref{thm12061651}.

\medskip

Suppose now we are in the hypothesis of Theorem~\ref{thm12061651}.\ref{thm12061651item2}.
Let $K$ and $A$ as in Lemma~\ref{lem10271236} so that $\delta_*=K\lambda^A$, i.e., $\lambda^A=K^{-1}\delta_*$.
Since $[K,A]=0$, then $[K,\delta_*]=0$.
Therefore, since $K$ is diagonalizable and $\delta_*$ is diagonalizable on $V_1^A$, then $A$ is also diagonalizable on $V_1^A$.
From Theorem~\ref{thm12041046} we get that there is an $A$-homogeneous distance $d$ on $G$.
Since $K$ is diagonalizable, $\sigma(K)\subset\bb S^1$ and $[A,K]=0$, then the closure $\scr K$ of the group generated by $K$ is a compact subgroup of $\Aut(\frk g)$ and $\delta\scr K\delta^{-1}=\scr K$. 
Hence, by Lemma~\ref{lem10290953}, we can assume that $K$ is an isometry for $d$ and thus $\delta$ is also a dilation of factor $\lambda$ for $d$.

 
\subsection{Proof of Theorem~\ref{thm12061855}}\label{buon_natale}

If $(X,d)$ is a locally compact, isometrically homogeneous and homothetic metric space,
then it is connected by \cite[Proposition~3.7]{MR2865538}.
We apply  \cite{2017arXiv170509648C}, see also Theorem~\ref{thm12041706}, to obtain that
$(X,d)$ is isometric to a self-similar metric Lie group $(G,d,\delta,\lambda)$.
In particular, the space $(G,d)$ is a homothetic nilpotent metric Lie group, by Theorem~\ref{teo08221641}.
Then
Proposition~\ref{prop11071818} completes the existence statement of Theorem~\ref{thm12061855}.
The uniqueness of the group structure $G$ follows from \cite{MR3646026}, where it is proven that isometries of nilpotent Lie groups are Lie group isomorphisms.

\subsection{Reductions to real spectrum cases}
We finish off with two results that are one complementary to the other.
We first show that, when the spectrum of $A$ is in the line $1+i\R$, the only $A$-homogeneous metric spaces are Banach spaces.
In other words, any $A$-homogeneous distance is also $\Id$-homogeneous, where $\Id$ is the real diagonal of $A$.
We then show that, beyond this case, it is possible to find examples where such a reduction to the real spectrum is not possible.

\begin{proposition}\label{prop12182017}
Let $A$ be a derivation on the Lie algebra of a Lie group $G$ such that
	$V_1^A=\frk g$.
	Then $A$-homogeneous distances are vector norms.
\end{proposition}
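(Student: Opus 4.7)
The hypothesis $V_1^A = \frk g$ together with Theorems~\ref{thm11171734} and~\ref{thm12041046} pins down the setting: the grading of $\frk g$ has only one nontrivial layer, so $[\frk g,\frk g]\subset V_2 = \{0\}$ and $\frk g$ is abelian; the group $G$ is connected and simply connected, hence identifiable with $\frk g$ via $\exp$ and carrying a real vector space structure on which each $\lambda^A$ acts linearly; and $A = A|_{V_1}$ is $\C$-diagonalizable with every eigenvalue of real part $1$. By Corollary~\ref{cor07251444} this yields $A = \Id + A_I$ with $A_I$ being $\C$-diagonalizable of purely imaginary spectrum, so $\lambda^A = \lambda\cdot\lambda^{A_I}$ for every $\lambda>0$ and the closure $T := \overline{\{\lambda^{A_I} : \lambda>0\}}$ is a compact torus in $\Aut(\frk g)$. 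My plan is to prove that $T$ acts on $(G,d)$ by isometries, after which
\[
d(0,\lambda x) = d(0,\lambda^{A}\lambda^{-A_I}x) = \lambda\, d(0,\lambda^{-A_I}x) = \lambda\, d(0,x),
\]
which combined with left-invariance, symmetry, and the triangle inequality shows that $\|x\| := d(0,x)$ is a norm with $d(x,y) = \|y-x\|$.

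The crux is the $T$-invariance of the closed unit ball $B := \{x : d(0,x)\le 1\}$. I would use the iterated $A$-convexity of $B$ established inside the proof of Lemma~\ref{lem10011902} (see~\eqref{eq12051028}), which in the abelian group $G$ asserts that $\sum_j \lambda_j^A x_j \in B$ whenever $x_j \in B$, $\lambda_j\in[0,1]$ and $\sum_j\lambda_j\le 1$. Applying it with $n$ copies of a single $x \in B$ and all weights equal to $1/n$ yields
\[
\sum_{j=1}^{n}(1/n)^A x = n\cdot(1/n)^A x = n\cdot(1/n)\cdot(1/n)^{A_I}x = (1/n)^{A_I}x \in B,
\]
so $(1/n)^{A_I}B\subset B$ for every $n\in\N$.

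To upgrade this to invariance under all of $T$ I would invoke two routine facts. First, in a compact group semi-invariance under an element is invariance (the closed subsemigroup generated by its nonnegative iterates is actually a closed subgroup), so $n^{A_I}B = B$ for every $n\in\N$, and composing gives $q^{A_I}B = B$ for every $q\in\Q_{>0}$. Second, $\log\Q_{>0}$ is a dense subgroup of $(\R,+)$, so the image $\{q^{A_I} : q\in\Q_{>0}\}$ under the continuous homomorphism $t\mapsto e^{tA_I}$ is dense in $T$; since the condition $\tau B\subset B$ is closed in $\tau$ (as $B$ is closed and the $\Aut(\frk g)$-action on $G$ is jointly continuous), it propagates to every $\tau\in T$. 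I do not anticipate a serious obstruction: the substantive content is the collapse of $n\cdot (1/n)^A$ to $(1/n)^{A_I}$ in the abelian setting, and the extension from $\N$ to $T$ is a standard closed-condition argument.
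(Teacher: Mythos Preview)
Your proof is correct and proceeds along a genuinely different line from the paper's. Both arguments hinge on the decomposition $A=\Id+A_I$ and on the compactness of the torus $T=\overline{\{\lambda^{A_I}:\lambda>0\}}$, but they exploit it differently. The paper picks a sequence $\lambda_k\to0^+$ with $\lambda_k^{-1}\lambda_k^A\to\Id$, writes an arbitrary $\lambda>0$ as a series $\sum_j\rho_\epsilon(j)$ of such $\lambda_k$'s, and approximates $\lambda p$ by $p_\epsilon=\sum_j\rho_\epsilon(j)^A p$; the triangle inequality gives $d(0,p_\epsilon)\le\lambda\, d(0,p)$, and letting $\epsilon\to0$ yields $d(0,\lambda p)\le\lambda\, d(0,p)$, with equality by symmetry. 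Your route instead proves outright that every $\tau\in T$ is an isometry: the exact collapse $n\cdot(1/n)^A=(1/n)^{A_I}$ together with iterated $A$-convexity gives $(1/n)^{A_I}B\subset B$, and then the compact-group facts (a closed subsemigroup of a compact group is a subgroup; $\log\Q_{>0}$ is dense in $\R$) upgrade this to $\tau B=B$ for all $\tau\in T$. What you gain is a stronger intermediate statement ($T$-invariance of $d$) and no $\epsilon$-bookkeeping; what the paper's argument gains is self-containment, since it uses only the triangle inequality and never invokes the ball characterization or the semigroup-to-group step.
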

\begin{proof}
	Let $d$ be a $A$-homogeneous distance on $G$.
	Then, $G$ is Abelian and simply connected by Theorem~\ref{thm12041046}, hence the exponential map $\exp:\frk g\to G$ is a Lie group isomorphism.
	We need to show that
	\begin{equation}\label{eq12182044}
	d(0,\lambda p) = \lambda d(0,p)
	\qquad\forall\lambda>0,\ p\in G.
	\end{equation}
	We fix a norm $\|\cdot\|$   on $\frk g$ and the corresponding operator norm on linear operators.
	
	By Theorem~\ref{thm12041046} again, $A$ is diagonalizable on the complex numbers.
	Hence, with the notation of Corollary~\ref{cor07251444}, $A_R=\Id$ and $A_N=0$.
	Hence, $\scr K=\overline{\{\lambda^{-1}\lambda^A\}_{\lambda>0}}$ is a compact subgroup of $\Aut(\frk g)$.
	It follows that there is a sequence $\lambda_k\to0^+$ such that
	\[
	\lim_{k\to\infty} \lambda_k^{-1}\lambda_k^A = \Id .
	\]
	
	Fix $p\in G$ and $\lambda>0$.
	Since $\lambda_k$ is an infinitesimal positive sequence, for every $\epsilon>0$ there is a function 
	\[
	\rho_\epsilon:\N\to\{\lambda_k:\|\lambda_k^{-1}\lambda_k^A - \Id\|<\epsilon\}
	\]
	such that $\lambda=\sum_{j\in\N}\rho_\epsilon(j)$.
	Define $p_\epsilon := \sum_{j\in\N} \rho_\epsilon(j)^A p$.
	Notice the following two facts:
	First,
	\[
	d(0,p_\epsilon)
	\le \sum_{j\in\N} d(0, \rho_\epsilon(j)^A p )
	= \sum_{j\in\N} \rho_\epsilon(j) d(0,p)
	= \lambda d(0,p) .
	\]
	Second,
	\begin{align*}
	\| p_\epsilon-\lambda p \|
	&= \left\| \sum_{j\in\N} \rho_\epsilon(j)^A p - \sum_{j\in\N} \rho_\epsilon(j) p \right\| 
	\le \sum_{j\in\N} \left\|   \rho_\epsilon(j)^A - \rho_\epsilon(j)\Id \right\| \|p\| \\
	&= \sum_{j\in\N} \rho_\epsilon(j) \left\|   \rho_\epsilon(j)^{-1}\rho_\epsilon(j)^A - \Id \right\| \|p\|
	\le \lambda\epsilon \|p\| .
	\end{align*}
	Therefore, $q_\epsilon\to \lambda p$ as $\epsilon\to0$ and, by the continuity of $d$,
	\begin{equation}\label{eq12171102}
	d(0,\lambda p) \le \lambda d(0,p) .
	\end{equation}
	Finally, since \eqref{eq12171102} holds for arbitrary $\lambda>0$ and $p\in\R^n$, we have also
	\[
	d(0,\lambda p) 
	\le \lambda d(0,p) 
	= \lambda d(0,\lambda^{-1}(\lambda p))
	\le d(0,\lambda p) .
	\]
	This shows~\eqref{eq12182044} and thus completes the proof.
\end{proof}

\begin{proposition}\label{prop12271522}
	There is a 
	locally compact, isometrically homogeneous and homothetic metric space
	that is not isometric to any $A$-homogeneous distance for $A$ with real spectrum.
\end{proposition}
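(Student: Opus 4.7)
The candidate metric space will be $(\R^2, d)$ from Example~\ref{sec12271414}, where $d$ is the $A$-homogeneous distance with $A = \begin{pmatrix} 2 & -1 \\ 1 & 2 \end{pmatrix}$ (spectrum $\{2 \pm i\}$) and unit ball $[-1,1]^2$. As a metric Lie group on $\R^2$, it is admissible by Theorem~\ref{thm11171734}, hence locally compact and isometrically homogeneous, and it is homothetic because each $\lambda^A$ is a dilation of factor $\lambda$. The plan is to show that it is not isometric to any $A'$-homogeneous distance with $\sigma(A')\subset\R$.

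Suppose for contradiction that $\phi\colon(\R^2,d)\to(G',d')$ is such an isometry. By the uniqueness in Theorem~\ref{thm12061855} we may identify $G'$ with $\R^2$ as a Lie group, and by \cite{MR3646026} $\phi$ is affine, so after a translation $\phi(x)=Lx$ for some $L\in\GL_2(\R)$. Since the origin-fixing dilations of $d'$ of factor $\lambda$ form a coset of the origin-fixing isometry group $I'_0$ containing $\lambda^{A'}$, and since $L\lambda^AL^{-1}$ is such a dilation, there is a continuous $f\colon\R_{>0}\to I'_0$ with $f(1)=\Id$ and $L\lambda^AL^{-1}=f(\lambda)\lambda^{A'}$. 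Differentiating the smooth curve $t\mapsto f(e^t)=e^{tLAL^{-1}}e^{-tA'}$ at $t=0$ yields
\[
K := LAL^{-1}-A' \;\in\; \frk i'_0,
\]
the Lie algebra of $I'_0$. As $I'_0$ preserves the compact set $B_{d'}=L\,[-1,1]^2$, it is a compact subgroup of $\GL_2(\R)$; so its identity component is either trivial or conjugate to $\mathrm{SO}(2)$.

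If $\frk i'_0=\{0\}$, then $A'=LAL^{-1}$ has spectrum $\{2\pm i\}$, contradicting $\sigma(A')\subset\R$. Otherwise $M\,\mathrm{SO}(2)\,M^{-1}\subset I'_0$ for some $M\in\GL_2(\R)$, and in the coordinates $y = M^{-1}x$ the pull-back $\tilde d'(y_1,y_2):=d'(My_1,My_2)$ is $\mathrm{SO}(2)$-invariant and $\tilde A'$-homogeneous with $\tilde A':=M^{-1}A'M$, $\sigma(\tilde A')\subset\R$. Conjugating a rotation by $\lambda^{\tilde A'}$ yields an origin-fixing isometry of $\tilde d'$, which by continuity in $\lambda$ lies in the identity component $\mathrm{SO}(2)$; hence $\lambda^{\tilde A'}$ normalizes $\mathrm{SO}(2)$ in $\GL_2(\R)$, which forces $\tilde A' = a\Id + bJ$ for some $a,b\in\R$, and then $\sigma(\tilde A')\subset\R$ gives $b=0$. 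Combined with rotational invariance of $\tilde d'$, this gives $\tilde d'(x,y) = c|x-y|^{1/a}$ for some $c>0$, so the unit ball of $\tilde d'$ is a Euclidean disk and $B_{d'}=M\cdot(\text{disk})$ is an ellipse—contradicting $B_{d'}=L\,[-1,1]^2$, which is a parallelogram.

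The delicate step is the second case, where $d'$ admits a continuous isometry subgroup; the key observation is that the normalizer condition combined with the real-spectrum hypothesis on $\tilde A'$ forces $\tilde A'$ to be a scalar, reducing $d'$ to a Euclidean snowflake whose ball is visibly not an affine image of a square.
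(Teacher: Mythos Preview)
Your proof is correct. Both you and the paper use the same example from Section~\ref{sec12271414}, and both arrive at a linear-algebra contradiction, but the routes differ.

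The paper's argument is shorter and works on the $(\R^2,d)$ side: after noting (implicitly, via the uniqueness in Theorem~\ref{thm12061855} and \cite{MR3646026}) that an isometry to an $A'$-homogeneous space would make $d$ itself $L^{-1}A'L$-homogeneous with real spectrum, it observes that $A$ and $L^{-1}A'L$ are linearly independent in $\frk p_0$, invokes Proposition~\ref{prop11071818} to get $\dim\frk p_0=\dim\frk i_0+1$, hence $\dim\frk i_0\ge1$, and finishes by remarking that the square $[-1,1]^2$ admits no nontrivial one-parameter group of linear symmetries.

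Your argument is more explicit about the isometry and works on the $d'$ side, which is a genuine service since the paper's reduction step is left tacit. Your Case~1 is equivalent to the paper's linear-independence step. In Case~2, however, you take a longer path than necessary: having established $\frk i'_0\neq\{0\}$, you could finish immediately as the paper does, since the parallelogram $B_{d'}=L\,[-1,1]^2$ has finite linear symmetry group and hence admits no nontrivial one-parameter subgroup of $\GL_2(\R)$ preserving it. Your normalizer argument and the reduction to a Euclidean snowflake are correct but reprove this fact by a roundabout route; in particular the real-spectrum hypothesis on $A'$, which you invoke to force $b=0$, is not actually needed at that stage of the argument.
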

\begin{proof}
	Let $(X,d)$ be the metric space described in Example~\ref{sec12271414}.
	Recall that $d$ is an admissible left-invariant distance on $X=\R^2$.
	
	Let $P_0$ and $I_0$ be the Lie groups of dilations and isometries, respectively, of $(\R^2,d)$ fixing $(0,0)$, and let $\frk p_0$ and $\frk i_0$ be their Lie algebras.
	Since $\R^2$ is nilpotent, we have $\frk i_0\subset \frk p_0\subset\frk{gl}(2)$.
	
	Recall that $d$ is $A$-homogeneous with $A:=\begin{pmatrix}2&-1\\1&2\end{pmatrix}$, that is $A\in\frk p_0$.
	The spectrum of $A$ is $\{2+i\}$.
	Suppose that $d$ were also $A'$-homogeneous for some $A'$ with real spectrum, i.e., $A'\in\frk p_0$.
	Therefore, since $A$ and $A'$ would be linearly independent and since, by Proposition~\ref{prop11071818}, we have $\dim(\frk p_0)=\dim(\frk i_0)+1$,
	then $\dim(\frk i_0)>0$, i.e., there would be $J\in\frk{gl}(\R^2)\setminus\{0\}$ such that $t\mapsto e^{tJ}$ were a one-parameter group of isometries of $(\R^2,d)$ fixing the origin $(0,0)$.
	
	Now, if $B$ is the unit ball of $d$ with center $(0,0)$, as we defined it in Example~\ref{sec12271414}, then $e^{tJ}B=B$ for all $t\in\R$.
	However, the only one-parameter subgroup of $\GL(\R^2)$ that fixes $B$ is the trivial group $\{\Id\}$.
	Thus $J=0$, which is a contradiction.
\end{proof}

\subsection{Proof of Theorem~\ref{thm05011958}}
Let $X$ be a metric space with a doubling measure~$\mu$ such that for $\mu$-a.e.~$p\in X$ there is a unique tangent $G_p$ to $X$ at $p$.
As it has been proved in \cite{MR2865538} (see especially Section~3.4 therein), for $\mu$-a.e.~$p\in X$, the space $G_p$ is 
 a locally compact, isometrically homogeneous, and homothetic metric space.
The conclusion follows from Theorem~\ref{thm12061855}.




\end{document}